\documentclass[11pt]{article}
\usepackage[title]{appendix}
\usepackage{xcolor}
\usepackage{geometry}                
\usepackage{graphicx}
\usepackage{subcaption}
\usepackage{pdflscape}
\usepackage{amssymb}
\usepackage[normalem]{ulem}
\usepackage{hyperref}
\usepackage{enumitem}
\usepackage{mathrsfs}

\usepackage{epstopdf}
\usepackage{rotating}
\usepackage{longtable} 
\usepackage{adjustbox}
\usepackage{float}

\usepackage{color}
\usepackage{bbm, dsfont}
\usepackage{pst-node}
\usepackage{tikz-cd}
\usetikzlibrary{arrows.meta}
\usepackage{amsfonts} 
\usepackage{geometry}
\usepackage{amsthm}
\usepackage{amsmath}
\usepackage{titlesec}
\usepackage{fixltx2e}
\usepackage{amssymb}
\usepackage{enumitem}
\usepackage{float}
\restylefloat{table}
\usepackage{tikz}
\usetikzlibrary{tikzmark}
\usetikzlibrary{arrows}
\usetikzlibrary{cd}
\usetikzlibrary{automata, positioning, arrows}
\usetikzlibrary{arrows.meta}
\usepackage[english]{babel}
\usepackage[autostyle, english = american]{csquotes}
\usepackage{algorithm}
\usepackage[noend]{algpseudocode} 
\makeatletter
\def\BState{\State\hskip-\ALG@thistlm}
\makeatother

\def\X^n{\text{X}^n}

\def\Id{\textup{Id}}
\usepackage{hyperref}
\usepackage[normalem]{ulem}

\newlist{casess}{enumerate}{1}
\setlist[casess]{label=     \textbf{Case} \arabic*:}
\usepackage{mathtools}

\makeatletter
\newcommand*{\rom}[1]{\expandafter\@slowromancap\romannumeral #1@}
\makeatother

\theoremstyle{plain}
\newtheorem{thm}{Theorem}[section]
\newtheorem{prop}[thm]{Proposition}
\newtheorem{cor}[thm]{Corollary}
\newtheorem{lemma}[thm]{Lemma}
\newtheorem*{thmA}{Theorem A}
\newtheorem*{thmB}{Theorem B}

\theoremstyle{definition}
\newtheorem{defn}[thm]{Definition}
\newtheorem{rmk}[thm]{Remark}

\newtheorem{exmp}[thm]{Example}

\usepackage{etoolbox}

\makeatletter
\patchcmd{\ttlh@hang}{\parindent\z@}{\parindent\z@\leavevmode}{}{}
\patchcmd{\ttlh@hang}{\noindent}{}{}{}
\makeatother

\usepackage{listings}
\usepackage{color} 
\definecolor{mygreen}{RGB}{28,172,0} 
\definecolor{mylilas}{RGB}{170,55,241}

\newlist{Assumptions}{enumerate}{1}
\setlist[Assumptions]{label=     \textbf{Assumption} \arabic*:}

\makeatletter

\newsavebox{\@brx}
\newcommand{\llangle}[1][]{\savebox{\@brx}{$\m@th{#1\langle}$}%
  \mathopen{\copy\@brx\kern-0.5\wd\@brx\usebox{\@brx}}}
\newcommand{\rrangle}[1][]{\savebox{\@brx}{$\m@th{#1\rangle}$}%
  \mathclose{\copy\@brx\kern-0.5\wd\@brx\usebox{\@brx}}}
\makeatother

\usepackage{lipsum} 
\usepackage{titlesec}
\titleformat{\subsection}[runin]
       {\normalfont\bfseries}
       {\thesubsection}
       {0.5em}
       {}
       [.]

\begin{document}
\title{Near full groups of bounded type, \rom{2}}
\author{Zheng Kuang\thanks{South China University of Technology, China; Email: kzkzkzz@scut.edu.cn}}
\date{}
\maketitle

\begin{abstract}
  We give sufficient conditions for a group of bounded type to contain the AF alternating group. We axiomatize the containment process through alternating data, diagonal inheritance, propagation by selectors, and bounded absorption. We also isolate a class of fragmentations that produce selectors. As an application, we show that a fragmentation group of the modified Fabrykowski--Gupta group contains the AF alternating group, has absorption delay at most $2$, and has parity completion equal to its topological full group. 
\end{abstract}
\tableofcontents

\section{Introduction}
\label{sec:introduction}

This paper is the second part of the series initiated in \cite{kua26no1}. The two papers separate the two components of an AF-by-discrete groupoid. In Part~I, we assumed that a finitely generated group of bounded type $\widehat{G}$ contains the AF alternating group $\mathsf{A}(\mathfrak{T}_{\mathsf{B}})$ and studied the remaining by-discrete part.
Under the finite singular germ and localization conditions, we obtained a canonical form for the action of the topological full group and proved that the parity completion of $\widehat{G}$ is the whole topological full group.

The purpose of the present paper is to study the AF part: we give sufficient finite-level conditions for the AF
alternating group $\mathsf{A}(\mathfrak{T}_{\mathsf{B}})$ to be contained in $\widehat{G}$. Together, the two papers describe a class of groups of bounded type that are close to their topological full groups.

The alternating group is the natural object to consider first. Nekrashevych~\cite{nnek19} defined the dynamical alternating full group $\mathsf{A}(\mathfrak{G})$ of a groupoid $\mathfrak{G}$ using $3$-cycles associated with compact open multisections. In the minimal setting, it gives a canonical simple subgroup of the topological full group. Let
$\mathsf{B}$ be a Bratteli diagram and $\mathfrak{T}_{\mathsf{B}}$ be the tail groupoid of $\mathsf{B}$. Denote by
$\mathcal{T}_{v,n}$ the tile whose vertices are finite paths on $\mathsf{B}$ ending in a vertex $v$ at level $n+1$ of $\mathsf{B}$. The AF alternating group $\mathsf{A}(\mathfrak{T}_{\mathsf{B}})$ is the direct limit of the finite alternating groups $\mathsf{A}_n=\prod_{v\in V_{n+1}} \mathsf{A}(\mathcal{T}_{v,n})$. Consequently, its containment can be studied through explicit permutations of the vertices of finite tiles over $\mathsf{B}$.

At the finite level, the AF alternating group is the natural first object in the containment problem. Its finite-level groups are generated by local $3$-cycles, and the connector-and-gluing arguments below depend only on how their
supports meet. Odd permutations carry additional information that is not detected by this support calculation. It is
measured by the finite parity group $\mathsf{P}_{\mathsf{B}}=H_0\bigl(\mathfrak{T}_{\mathsf{B}};\mathbb{Z}/2\mathbb{Z}\bigr)$. Thus the alternating-containment problem contains the main finite-level combinatorics, while the passage to the AF symmetric group is a separate finite parity-completion problem.

Once $\mathsf{A}(\mathfrak{T}_{\mathsf{B}})\leq\widehat{G}$ is known, adjoining representatives of the missing classes in $\mathsf{P}_{\mathsf{B}}$ gives the AF symmetric group. The group $\mathsf{A}(\mathfrak{T}_{\mathsf{B}})$ is the alternating full group of the AF core and is naturally contained in
$\mathsf{A}(\mathfrak{G})$; its containment supplies the local $3$-cycles to which the canonical form from Part~I attaches the remaining singular pieces. The localization results of Part~I then describe the non-AF behavior through finitely many shifted singular germs. Thus the containment proved here is the main algebraic input for the explicit full group completion developed in \cite{kua26no1}.

The combinatorial mechanism comes from the tile inflation. When we pass from level $n$ to level $n+1$ of $\mathsf{B}$, the canonical embedding $\iota_{n,n+1}$ copies each old permutation to all its descendants. Therefore, an alternating group available at level $n$ generally appears at level $n+1$ as a synchronized diagonal subgroup distributed among several tiles. The new copies are joined by the \emph{connector graph} $\mathsf{e}_n$. A \emph{selector} is an element of $\widehat{G}$ which realizes the required permutation on the active connectors while acting trivially on the protected connectors. Conjugating inherited $3$-cycles by selectors produces bridge cycles between the descendant supports. The
finite gluing lemmas then produce the \emph{alternating data} required at the next level.

The resulting alternating groups need not become independent after one embedding. Some of them may remain synchronized across several isomorphic copies of tiles. We therefore introduce \emph{bounded absorption}: every $\mathsf{A}_n$ must become contained in the available alternating data after a uniformly bounded number of further inflations.

The selector-absorbing formalism axiomatizes the alternating-containment process appearing in V.~Nekrashevych's
argument \cite[Lemma~5.3.17]{nek22}. In that argument, finite alternating groups are inherited at the next level
through diagonal embeddings, connector transformations produce bridge $3$-cycles between descendant supports, and
finite gluing recovers larger alternating groups. We separate this process into diagonal inheritance, propagation by selectors, and bounded absorption. This allows us to apply the same mechanism to general tile inflations, including examples in which the inherited alternating groups remain synchronized for several levels. This gives a concrete picture of the \emph{absorption} of the AF alternating group by $\widehat{G}$.

We also isolate a sufficient condition for the existence of selectors. It is based on the fragmentation method
introduced in \cite{nekrash18} and developed in \cite[Chapter~4]{JC19}; see also \cite[Subsection~2.4]{kua26} for a summary. A selector-producing fragmentation is a finite subdirect product which realizes the prescribed active and protected coordinates of every connector gluing argument. The fragmentations in our examples also satisfy an identity-coordinate condition and hence produce purely non-Hausdorff singularities. This additional property is not required by the abstract selector theory. We do not claim that every selector arises from a purely non-Hausdorff singularity, and we leave open both necessary conditions and other sufficient conditions for the existence of selectors.

The main result of the paper is the following.

\begin{thmA}[Selector bootstrap]
  \label{thm:introduction-selector-bootstrap}
  Let $\mathsf{B}$ be a simple Bratteli diagram and let
  $\widehat{G}$ be a group defined by a tile
  inflation process on $\mathsf{B}$. If $\widehat{G}$ admits a selector-absorbing alternating
  system, then
  \[\mathsf{A}(\mathfrak{T}_{\mathsf{B}})\leq\widehat{G}.\]
\end{thmA}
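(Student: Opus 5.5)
Since $\mathsf{A}(\mathfrak{T}_{\mathsf{B}})=\varinjlim \mathsf{A}_n$ with $\mathsf{A}_n=\prod_{v\in V_{n+1}}\mathsf{A}(\mathcal{T}_{v,n})$, it suffices to show that each finite group $\mathsf{A}_n$, embedded via $\iota_{n,\infty}$, lies in $\widehat{G}$. I would extract this from the selector-absorbing alternating system by an induction on levels. At each level $m$ there are \emph{alternating data} $\mathcal{D}_m$: a family of supports in the tiles of level $m$, possibly synchronized across isomorphic copies, whose full (diagonally embedded) alternating groups are contained in $\widehat{G}$. Write $\mathsf{A}(\mathcal{D}_m)\le\widehat{G}$ for the group they generate. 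The base case is the initial alternating data supplied by the system.

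For the inductive step $m\to m+1$ I would proceed in four moves.

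\begin{enumerate}
\item \textbf{Diagonal inheritance.} Applying $\iota_{m,m+1}$ to $\mathsf{A}(\mathcal{D}_m)$ gives the same elements of $\widehat{G}$. At level $m+1$ these are read as synchronized diagonal alternating groups on the descendant supports. This costs nothing, since it is the same subgroup of homeomorphisms.
\item \textbf{Propagation by selectors.} For each edge of the connector graph $\mathsf{e}_m$ to be activated, the system provides a selector $s\in\widehat{G}$. It acts as the prescribed permutation on the active connectors and trivially on the protected ones. Conjugating an inherited diagonal $3$-cycle $c$ by $s$, and forming commutators $[c,c^s]$ or products $c\,(c^{s})^{-1}$, yields bridge $3$-cycles. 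Their supports meet two distinct descendant supports in a single point, or in a controlled overlap, and are supported on one copy only. Protectedness is exactly what ensures the other copies cancel.
\item \textbf{Gluing.} I would invoke the finite gluing lemmas: if $\mathsf{A}(X)$ and $\mathsf{A}(Y)$ are present with $|X\cap Y|\ge 1$ and the union is connected through $3$-cycles, then $\mathsf{A}(X\cup Y)$ is generated. This produces $\mathcal{D}_{m+1}$.
\item \textbf{Connectedness.} Connectedness of the gluing pattern follows from connectedness of $\mathsf{e}_m$, which is guaranteed by the system's axioms, using simplicity of $\mathsf{B}$.
\end{enumerate}

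The step I expect to be the main obstacle is bounded absorption. The data $\mathcal{D}_m$ may still be synchronized, so $\mathsf{A}(\mathcal{D}_m)$ need not contain $\mathsf{A}_m$ itself, only a diagonal copy. Here I would use the absorption axiom with delay $d$: every $\mathsf{A}_n$ satisfies $\iota_{n,n+d}(\mathsf{A}_n)\le\mathsf{A}(\mathcal{D}_{n+d})$. The subtle point is that the synchronizations must be genuinely broken. A $3$-cycle of $\mathsf{A}(\mathcal{T}_{v,n})$ embeds as a product of $3$-cycles over all descendant copies. To realize it, $\mathcal{D}_{n+d}$ must contain the full product $\prod_{\text{copies}}\mathsf{A}$ on the independent supports, not merely a diagonal. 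I would check that the propagation step produces \emph{single-copy} bridge cycles (via protected connectors). Normal closure arguments then upgrade the diagonal to the product: a single-copy $3$-cycle generates, under conjugation by the diagonal alternating group of the glued support, the alternating group of that copy.

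Given this, $\iota_{n,\infty}(\mathsf{A}_n)=\iota_{n+d,\infty}\iota_{n,n+d}(\mathsf{A}_n)\le\widehat{G}$ for every $n$. Passing to the union over $n$ gives $\mathsf{A}(\mathfrak{T}_{\mathsf{B}})\le\widehat{G}$. The uniformity of $d$ is not needed for containment itself, only finiteness for each $n$. It matters for the later delay statement.
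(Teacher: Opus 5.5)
Your closing step, $\iota_{n,\infty}(\mathsf{A}_n)=\iota_{n+d,\infty}\iota_{n,n+d}(\mathsf{A}_n)\le\widehat{G}$ after an induction over levels, is the paper's proof. You are also right that only the existence of some $d_n$ for each $n$ is used, not the uniform bound $D$. But the theorem is purely formal, because the work you plan to redo is already stated in the hypotheses. Axiom \textup{(A2)} is the containment $\mathsf{C}_{n+1}\le\langle\iota_{n,n+1}(\mathsf{C}_n),s_{n,1},\ldots,s_{n,k_n}\rangle$ with $s_{n,j}\in\widehat{G}$, and axiom \textup{(A3)} is the containment $\iota_{n,n+d_n}(\mathsf{A}_n)\le\mathsf{C}_{n+d_n}$. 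The paper's proof is therefore three lines. \textup{(A1)} gives $\mathsf{C}_{n_0}\le\widehat{G}$. Since $\iota_{n,n+1}$ does not change homeomorphisms, \textup{(A2)} gives $\mathsf{C}_n\le\widehat{G}$ for all $n\ge n_0$. Then \textup{(A3)} puts every $g\in\mathsf{A}_n$ with $n\ge n_0$ into $\widehat{G}$, and lower levels are first pushed into $\mathsf{A}_{n_0}$. No gluing, connectedness or desynchronization argument is needed.

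Where your plan goes beyond this, it contains errors you should remove.

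First, the induction must be on $\mathsf{C}_m$, not on the groups $\mathsf{A}(\mathcal{D}_m)$ of the data. \textup{(A2)} feeds in $\iota_{m,m+1}(\mathsf{C}_m)$, which includes the auxiliary subgroups used to move connecting points, and \textup{(A3)} lands in $\mathsf{C}_{n+d_n}$. In the Fabrykowski--Gupta example, the auxiliary subgroup is the transitive $\mathsf{M}_n\le\mathsf{A}(\mathcal{T}_{0_1,n})$. It is not contained in $\langle\mathsf{D}_n,\mathsf{E}_n\rangle$, and it is needed to build $\mathsf{D}_{n+1}$.

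Second, your step 4 is false as stated. Connectedness of $\mathsf{e}_m$ is not an axiom and does not follow from simplicity of $\mathsf{B}$. The paper explicitly allows $\mathsf{e}_n$ to have several components; in the example there is one triangle per tile copy.

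Third, the paragraph you flag as the main obstacle is misconceived. The group $\iota_{n,n+d}(\mathsf{A}_n)$ is itself diagonal, so absorption does not require full products over independent copies. For instance, in the example's two-level absorption lemma, $\iota_{n,n+2}(g)=\prod_{x\in I}g_{0_1x}$ with $I=\{0_0,1,2\}$ is a synchronized element of $\mathsf{D}_{n+2}$. Nor do the axioms supply single-copy bridge cycles. By \textup{(E1)}, a selector acts by the same permutation on all synchronized copies. Protected connectors record non-interference with other data or adjacent levels, not cancellation between copies. In the example, synchronization is broken by the Bratteli combinatorics instead, as in $h_{0_1}(\sigma)=\iota_{n,n+1}(\sigma)h_I(\sigma)^{-1}$, which uses that $0_0$ has only the successor $0_1$. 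An attempt to derive \textup{(A3)} from selectors alone would therefore fail in general; cite it as a hypothesis.
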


Theorem~A is a direct consequence of the construction: the propagation condition places the available alternating data in $\widehat{G}$ at every level, while bounded absorption captures every finite-level group $\mathsf{A}_n$. The substantive work is therefore to construct the alternating data, specify the connectors and selectors used in every transition, and verify a uniform absorption delay.

Our main example is a fragmentation group of the modified Fabrykowski--Gupta group. The modification is based on the construction in \cite{nek25}, also described in \cite[Subsection~5.3.5]{nek22}, while the fragmentation of the original Fabrykowski--Gupta group and growth properties were studied in \cite[Subsection~5.4]{kua26}. Its inherited finite-level alternating groups remain synchronized for more than one level. This example explains why bounded absorption is a separate condition.

\begin{thmB}
  \label{thm:introduction-modified-FG}
  Let $\widehat{G}_{\mathrm{FG}}$ be the fragmentation group of the modified
  Fabrykowski--Gupta group constructed in Section~\ref{sec:fragment-FG-modified}, and put $\mathfrak{G}_{\mathrm{FG}} =\operatorname{Germ}\bigl(\widehat{G}_{\mathrm{FG}}\curvearrowright\Omega(\mathsf{B})\bigr)$.

  Then $\widehat{G}_{\mathrm{FG}}$ admits a selector-absorbing alternating system with absorption delay at most $2$.
  Consequently,
  \[\mathsf{A}(\mathfrak{T}_{\mathsf{B}})\leq\widehat{G}_{\mathrm{FG}}.\]
  Moreover, $\mathsf{P}_{\mathsf{B}}\cong(\mathbb{Z}/2\mathbb{Z})^2$ and $\mathsf{P}_{\widehat{G}_{\mathrm{FG}}}=0$, and we have
  \[\mathsf{F}(\mathfrak{G}_{\mathrm{FG}}) =\mathsf{S}(\mathfrak{T}_{\mathsf{B}})
    \widehat{G}_{\mathrm{FG}}, \qquad
\bigl[\mathsf{F}(\mathfrak{G}_{\mathrm{FG}}):\widehat{G}_{\mathrm{FG}}\bigr]=4.\]
  The parity completion satisfies $\widehat{G}_{\mathrm{FG}}^{\mathrm{par}} =\mathsf{F}(\mathfrak{G}_{\mathrm{FG}})$. In particular, $\mathsf{F}(\mathfrak{G}_{\mathrm{FG}})$ is finitely generated and has intermediate growth.
\end{thmB}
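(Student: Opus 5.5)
Theorem~B asks for four things, and I will establish them in the order: alternating system, Theorem~A, parity computation, Part~I. First I build an explicit selector-absorbing alternating system for $\widehat{G}_{\mathrm{FG}}$, by checking each of its axioms in turn. Theorem~A then gives $\mathsf{A}(\mathfrak{T}_{\mathsf{B}})\leq\widehat{G}_{\mathrm{FG}}$. Next I compute the two parity groups $\mathsf{P}_{\mathsf{B}}$ and $\mathsf{P}_{\widehat{G}_{\mathrm{FG}}}$. Finally the canonical form and parity-completion results of Part~I \cite{kua26no1} produce the full-group statements.

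\emph{Step 1: the Bratteli diagram and the tiles.} I would first write down the Bratteli diagram $\mathsf{B}$ for the modified Fabrykowski--Gupta construction following \cite{nek25} and \cite[Subsection~5.3.5]{nek22}, and describe the tiles $\mathcal{T}_{v,n}$ with their inflation rule. The tiles should be built from $3$ copies of the previous level, with the extra vertex types introduced by the modification. At each level I then specify the connector graph $\mathsf{e}_n$ and its active and protected connectors.

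\emph{Step 2: selectors.} I would show that the fragmentation used to define $\widehat{G}_{\mathrm{FG}}$ is selector-producing, using the sufficient criterion isolated earlier. This amounts to a finite check on the subdirect product of the fragmentation: for each connector gluing configuration one must realize the prescribed active coordinate while keeping the protected coordinates trivial. The check reduces to a finite computation in the level-one quotient, because the fragmentation is self-similar; the identity-coordinate condition helps here.

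\emph{Step 3: diagonal inheritance and gluing.} I would show that the inherited diagonal copies of $\mathsf{A}_n$, once conjugated by selectors, give bridge $3$-cycles between descendant supports. The finite gluing lemmas then recover the alternating data at level $n+1$.

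\emph{Step 4: bounded absorption, the expected main obstacle.} I must show that every $\mathsf{A}_n$ lies in the available alternating data at level $n+2$. The difficulty is that after one inflation some inherited copies remain synchronized across isomorphic tiles. I would do a case analysis on vertex types. After one inflation a synchronized pair is split by a selector acting on only one copy. After the second inflation the relevant tiles are connected through the connector graph, so the gluing lemmas apply to each copy separately. I would try to identify explicitly which vertex type stays synchronized after one step and exhibit the desynchronizing selector two levels down. This is where most of the work goes.

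\emph{Step 5: parity and the full-group statements.} For $\mathsf{P}_{\mathsf{B}}=H_0(\mathfrak{T}_{\mathsf{B}};\mathbb{Z}/2\mathbb{Z})$, I would compute the direct limit of the incidence matrices mod $2$. I expect them to stabilize to rank $2$ on the eventual image, giving $(\mathbb{Z}/2\mathbb{Z})^2$. To get $\mathsf{P}_{\widehat{G}_{\mathrm{FG}}}=0$, I would check that all generators of $\widehat{G}_{\mathrm{FG}}$ have trivial parity class: each generator acts by even permutations on tiles, up to the singular germs.

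Part~I then supplies the canonical form and the equality $\widehat{G}^{\mathrm{par}}=\mathsf{F}(\mathfrak{G}_{\mathrm{FG}})$, with $\mathsf{F}=\mathsf{S}(\mathfrak{T}_{\mathsf{B}})\widehat{G}$. The index equals
\[
|\mathsf{P}_{\mathsf{B}}/\mathrm{im}\,\mathsf{P}_{\widehat{G}}|=4.
\]
Since this index is finite, $\mathsf{F}(\mathfrak{G}_{\mathrm{FG}})$ is finitely generated. Its growth is intermediate because $\widehat{G}_{\mathrm{FG}}$ has intermediate growth, by the argument of \cite[Subsection~5.4]{kua26} adapted to the modification, and growth type is preserved under finite-index extension.
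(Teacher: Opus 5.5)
Your overall route is the paper's: build the system, apply Theorem~A, compute the parity groups, invoke Part~I. But the step you single out as the crux, bounded absorption, aims at a mechanism this example does not provide and does not need.

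For every $m$, the three tiles $\mathcal{T}_{x,m}$, $x\in I=\{0_0,1,2\}$, have the same predecessor decomposition $\mathcal{T}_{0_1,m-1}x\sqcup\mathcal{T}_{1,m-1}x\sqcup\mathcal{T}_{2,m-1}x$. So $\iota_{m-1,m}(\mathsf{C}_{m-1})$ acts on all three by the same vertex permutation. The fragments in $\langle b_1,b_2\rangle$ also act at the connecting points in the same way for every $x\in I$, since their activity depends only on the phase of the level: on these points $s_{m-1}=\prod_{x\in I}(p_{0,x}\,p_{1,x}\,p_{2,x})$. Selectors do break synchronization between subcopies lying inside one tile; that is how the full coordinate $\mathsf{A}(\mathcal{T}_{x,n+1})$ of $\mathsf{D}_{n+1}$ and the group $\mathsf{M}_{n+1}$ arise. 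But the threefold synchronization across the tiles $\mathcal{T}_{x,m}$, $x\in I$, is never removed by propagation, at level $n+1$ or at $n+2$. So ``a selector acting on only one copy'' and ``gluing each copy separately'' are not available.

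The missing idea is that absorption requires no desynchronization. You choose the data synchronized exactly as $\iota_{n,n+2}$ itself synchronizes:
\begin{itemize}
\item $\mathsf{D}_m=\Delta\bigl(\mathsf{A}(\mathcal{T}_{0_0,m}),\mathsf{A}(\mathcal{T}_{1,m}),\mathsf{A}(\mathcal{T}_{2,m})\bigr)$;
\item $\mathsf{E}_m$, the diagonal group on the three subcopies $\mathcal{T}_{y,m-1}0_1$ of $\mathcal{T}_{0_1,m}$.
\end{itemize}
Then use that $0_0$ is followed only by $0_1$, that $0_1$ is followed by $0_0,1,2$, and that $1$ and $2$ are followed by all four letters. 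For $g\in\mathsf{A}(\mathcal{T}_{0_0,n})$ this gives $\iota_{n,n+2}(g)=\prod_{x\in I}g_{0_1x}\in\mathsf{D}_{n+2}$. For $g$ in the other three tiles, $\iota_{n,n+2}(g)$ splits into one factor $\prod_{x\in I}g_{x0_1}\in\mathsf{E}_{n+2}$ and factors of the form $\prod_{x\in I}g_{zx}\in\mathsf{D}_{n+2}$. The independence of $\mathsf{A}_n$ is recovered only at the level of homeomorphisms, through $g=\iota_{n,n+2}(g)$.

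Your Steps~1--3 leave these data unspecified. With them, propagation needs three further ingredients:
\begin{itemize}
\item the cancellation $h_{0_1}(\sigma)=\iota_{n,n+1}(\sigma)h_I(\sigma)^{-1}$, with $h_I(\sigma)\in\mathsf{D}_{n+1}$, to isolate $\mathsf{E}_{n+1}$;
\item a transitive auxiliary group $\mathsf{M}_n\leq\mathsf{A}(\mathcal{T}_{0_1,n})$ to move points;
\item an explicit seed at levels $2$ and $3$.
\end{itemize}

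Step~5 also invokes Part~I without checking its hypotheses. The completion theorem needs the finite singular germ condition and the localization condition. The conclusions $\mathsf{F}(\mathfrak{G}_{\mathrm{FG}})=\mathsf{S}(\mathfrak{T}_{\mathsf{B}})\widehat{G}_{\mathrm{FG}}$ and $[\mathsf{F}(\mathfrak{G}_{\mathrm{FG}}):\widehat{G}_{\mathrm{FG}}]=[\mathsf{P}_{\mathsf{B}}:\mathsf{P}_{\widehat{G}_{\mathrm{FG}}}]$ additionally need the parity condition on AF truncations. Without it you only get $\widehat{G}_{\mathrm{FG}}^{\mathrm{par}}=\mathsf{F}(\mathfrak{G}_{\mathrm{FG}})$.

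The paper verifies these conditions separately:
\begin{itemize}
\item $\xi=2^\omega$ is the unique boundary point, it is germ-defining, and its shifted germ groups are $(\mathbb{Z}/3\mathbb{Z})^2$;
\item every singular germ is localized by an element of $\langle b_1,b_2\rangle$, which has exponent $3$;
\item the AF truncation of such an element has order dividing $3$, so its parity is trivial and it lies in $\mathsf{A}(\mathfrak{T}_{\mathsf{B}})\leq\widehat{G}_{\mathrm{FG}}$.
\end{itemize}

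Two smaller points. First, $\varepsilon_{\mathsf{B}}$ is not defined on the non-AF generators $b_1,b_2$. So ``trivial parity class up to the singular germs'' must become an invariant defined on all of $\widehat{G}_{\mathrm{FG}}$ that computes $\varepsilon_{\mathsf{B}}$ on its AF elements. Second, the identity-coordinate condition plays no role in producing selectors; the paper stresses that it is neither required nor sufficient. The real check is that the span of the activity vectors $(0,1,2,1)$ and $(1,1,1,0)$ in $(\mathbb{Z}/3\mathbb{Z})^4$ contains, for each of the four phases, a vector that is nonzero at that phase and zero at the next.
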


The full group statements in Theorem~B follow from Theorem~A together with the completion theorem (\cite[Theorem
  A]{kua26no1}) of Part~I. The unique boundary point is a purely non-Hausdorff germ-defining singularity, and its shifted groups of germs are isomorphic to $(\mathbb{Z}/3\mathbb{Z})^2$.

\subsection{Sketch of the methods}
\label{subsec:introduction-methods}

We first encode the finite alternating groups available at each level by alternating data. An \emph{alternating datum} (Definition~\ref{def:alternating-datum}) consists of a finite model support and address embeddings identifying its copies inside the level-$n$ tiles. The diagonal inheritance formula (Proposition~\ref{prop:diagonal-inheritance}) describes the image of such data under $\iota_{n,n+1}$.

To produce a target datum at level $n+1$, we specify the predecessor data at level $n$, the descendant supports on
which they act, and the connectors in $\mathsf{e}_n$ joining these supports. Selectors move the connecting points on
the \emph{active connectors} and remain trivial on the \emph{protected connectors}. Conjugation and commutators produce bridge $3$-cycles, while the gluing lemmas (Section \ref{sec:alternating-gluing}) for finite alternating groups generate the required independent or synchronized alternating group.

The propagation of the alternating data gives groups $\mathsf{C}_n\leq\mathsf{A}_n$. We then verify bounded absorption: there is a uniform $D$ such that every $\iota_{n,n+d_n}(\mathsf{A}_n)$, for some $d_n\leq D$, is contained in
$\mathsf{C}_{n+d_n}$. The selector bootstrap theorem then gives $\mathsf{A}(\mathfrak{T}_{\mathsf{B}})\leq\widehat{G}$.

For the main example, the required selectors are obtained from a specific fragmentation group first described in
\cite[Subsection 5.4]{kua26}. The activity vectors determine which connectors are active and protected at each phase. The alternating data propagate through synchronized diagonal groups, and a two-level calculation gives absorption delay at most $2$. We then compute the dimension and parity groups and apply the localization theorem from Part~I. We also mention (Corollary~\ref{cor:FG-full-group-growth}) that the obtained topological full group is torsion and has intermediate growth. 

\subsection{Organization of the paper}
\label{subsec:introduction-organization}

In Section~\ref{sec:preliminaries}, we recall the tile inflation processes, connectors, diagonal embeddings, and the full group
completion theorem from Part~I. In Section~\ref{sec:alternating-gluing}, we collect the finite alternating-group gluing lemmas used in the propagation arguments. In Section~\ref{sec:selector-absorbing-alternating-systems}, we introduce alternating data, selectors, propagation, and bounded absorption, and prove Theorem~A. In Section~\ref{sec:fragmentations-producing-selectors}, we give a sufficient fragmentation criterion for constructing selectors and discuss its relation with purely non-Hausdorff singularities. Finally, in
Section~\ref{sec:fragment-FG-modified}, we construct the fragmentation group of the modified Fabrykowski--Gupta group
and prove Theorem~B.

\section{Preliminaries}
\label{sec:preliminaries}

We use the notation and conventions of \cite{kua26no1}. We recall only the parts needed to formulate the level-by-level alternating construction and to apply the full group completion theorem. We use left actions throughout this paper. Group elements are read from right to left, while paths in a Bratteli diagram are read from left to right.

\subsection{Bratteli diagrams, tiles, and AF symmetric and alternating groups}
\label{subsec:prelim-bratteli-tiles}

Let $\mathsf{B}=((V_n)_{n\geq1},(E_n)_{n\geq1},\mathbf{s},\mathbf{r})$ be a simple Bratteli diagram. We denote its infinite path space by $\Omega(\mathsf{B})$ and the set of paths of length $n$ by $\Omega_n(\mathsf{B})$. For $v\in
  V_{n+1}$, put $\mathcal{T}_{v,n}:=\{\gamma\in\Omega_n(\mathsf{B}):\mathbf{r}(\gamma)=v\}$. We regard $\mathcal{T}_{v,n}$ as the set of vertices of the level-$n$ tile corresponding to $v$.

The tail groupoid is denoted by $\mathfrak{T}_{\mathsf{B}}$, and its elementary level-$n$ subgroupoid by $\mathfrak{T}_{\mathsf{B},n}$. We put $\mathsf{S}_n:=\prod_{v\in V_{n+1}}\mathsf{S}(\mathcal{T}_{v,n})$ and $\mathsf{A}_n:=\prod_{v\in V_{n+1}}\mathsf{A}(\mathcal{T}_{v,n})$. The AF symmetric and alternating groups are the
direct limits $\mathsf{S}(\mathfrak{T}_{\mathsf{B}})=\varinjlim\mathsf{S}_n$ and $\mathsf{A}(\mathfrak{T}_{\mathsf{B}})=\varinjlim\mathsf{A}_n$.

Let $e\in E_{n+1}$ satisfy $\mathbf{s}(e)=v$. Appending $e$ gives a canonical identification $\phi_e:\mathcal{T}_{v,n}\to\mathcal{T}_{v,n}e$, $\phi_e(\gamma)=\gamma e$. The canonical embedding $\iota_{n,n+1}:\mathsf{S}_n\to\mathsf{S}_{n+1}$ copies the same permutation to every descendant support. More precisely, if $\sigma=(\sigma_v)_{v\in V_{n+1}}\in\mathsf{S}_n$ and $w\in V_{n+2}$, then
\begin{equation}
  \left.\iota_{n,n+1}(\sigma)\right|_{\mathcal{T}_{w,n+1}}
  =
  \prod_{\substack{e\in E_{n+1}\\ \mathbf{r}(e)=w}}
  (\sigma_{\mathbf{s}(e)})_e,
  \label{eq:prelim-diagonal-embedding}
\end{equation}
where $(\sigma_{\mathbf{s}(e)})_e=\phi_e\sigma_{\mathbf{s}(e)}\phi_e^{-1}$. These embeddings restrict to $\mathsf{A}_n\to\mathsf{A}_{n+1}$. Thus every finite-level symmetric and alternating group is inherited at the next level as a diagonal group on its descendant copies.

\subsection{Tile inflations and connectors}
\label{subsec:prelim-tile-inflations}

We briefly recall the tile inflation process from \cite[Subsection~2.4]{kua26no1}. A finite tile is a well-labeled
graph with boundary. A boundary edge has exactly one endpoint in the tile, and its defined endpoint is called a
boundary vertex. Suppose that the level-$n$ tiles have been constructed, and fix $v\in V_{n+2}$. Before adding the new connector edges, the vertex set of the next tile is the disjoint union
\begin{equation}
  \mathcal{T}_{v,n+1}
  =
  \bigsqcup_{\substack{e\in E_{n+1}\\ \mathbf{r}(e)=v}}
  \mathcal{T}_{\mathbf{s}(e),n}e.
  \label{eq:prelim-tile-inflation}
\end{equation}
We call the sets $\mathcal{T}_{\mathbf{s}(e),n}e$ the \emph{predecessor} copies of $\mathcal{T}_{v,n+1}$.

\begin{defn}[Connector]
  \label{def:prelim-connector}
  Let $e_1,e_2\in E_{n+1}$ have the same range. Suppose that $\gamma_1e_1$ carries an outgoing boundary edge labeled by $F$ and $\gamma_2e_2$ carries the compatible incoming boundary edge labeled by $F^{-1}$. The triple
  $c=(\gamma_1e_1,\gamma_2e_2,F)$ is a level-$(n+1)$ connector. In the inflation, we replace the two boundary edges by $\gamma_1e_1\xrightarrow{F}\gamma_2e_2$ and $\gamma_2e_2\xrightarrow{F^{-1}}\gamma_1e_1$. The vertices $\gamma_1e_1$ and $\gamma_2e_2$ are its \emph{connecting points}.
\end{defn}

We use the following convention. The \emph{connector graph} used in the inflation from level $n$ to level $n+1$ is
denoted by $\mathsf{e}_n$. It is the union of the connectors occurring in Definition~\ref{def:prelim-connector}. Thus, if $\mathcal{C}_n$ is the set of connectors at level $n$, then $\mathsf{e}_n=\bigcup_{c\in\mathcal{C}_n}c$.

The graph $\mathsf{e}_n$ may have several connected components. For example, in Section~\ref{sec:fragment-FG-modified}, the components are triangular connector graphs.

Let $c=(p,q,F)$ be a connector, i.e., $p$ and $q$ are its connecting points. We put $U_c=[p]\sqcup[q]$. More generally, let $V(\mathsf{e}_n)$ be the set of all connecting points of $\mathsf{e}_n$ and define 
\[U_{\mathsf{e}_n}:=\bigsqcup_{p\in V(\mathsf{e}_n)}[p].\]

We call $U_{\mathsf{e}_n}$ the \emph{connector neighborhood} of $\mathsf{e}_n$. If $\mathsf{a}\subseteq\mathsf{e}_n$ is a union of connectors, we define $U_{\mathsf{a}}$ in the same way. An element $s$ induces a permutation $\pi$ on
$\mathsf{e}_n$ if, after shrinking $U_{\mathsf{e}_n}$ when necessary, we have $s(p_iw)=\pi(p_i)w$ for every compatible tail $w$.

The labels of the boundary and connector edges define partial homeomorphisms of $\Omega(\mathsf{B})$. After the
required extensions at boundary points, they generate the group or inverse semigroup associated with the tile
inflation. In this paper, we use only the predecessor decomposition~\eqref{eq:prelim-tile-inflation}, the connector
graphs, and the diagonal embedding~\eqref{eq:prelim-diagonal-embedding}.

\subsection{The completion theorem from Part I}
\label{subsec:prelim-full-group-completion}

We recall the hypotheses and conclusion of the main theorem of \cite{kua26no1}. Let $G=\langle S\rangle$ be a group of bounded type arising from a tile inflation over $\mathsf{B}$, and let $\Xi=\{\xi_1,\ldots,\xi_r\}$ be the set of
boundary points of the infinite tiles.

\begin{defn}[Finite singular germ condition]
  \label{def:prelim-finite-singular-germ-condition}
  We say that $G$ satisfies the finite singular germ condition if $S=S_{\mathrm{fin}}\sqcup S_{\mathrm{sing}}$, every element of $S_{\mathrm{fin}}$ is finitary, every $\xi_i$ is germ-defining, every $s\in S_{\mathrm{sing}}$ fixes one $\xi_i$ and has its unique non-AF germ there, and every group of germs $H_i=G_{\xi_i}/G_{(\xi_i)}$ is finite.
\end{defn}

Let $\widehat{G}\leq\mathsf{F}(\mathfrak{G})$, where $\mathfrak{G}=\operatorname{Germ}(\widehat{G}\curvearrowright\Omega(\mathsf{B}))$, and put $\mathsf{P}_{\mathsf{B}}:=H_0(\mathfrak{T}_{\mathsf{B}};\mathbb{Z}/2\mathbb{Z})$. The AF parity homomorphism gives an
exact sequence
\[ 1\longrightarrow\mathsf{A}(\mathfrak{T}_{\mathsf{B}})\longrightarrow
  \mathsf{S}(\mathfrak{T}_{\mathsf{B}})\xrightarrow{\ \varepsilon_{\mathsf{B}}\ }
  \mathsf{P}_{\mathsf{B}}\longrightarrow1.\]
Put $\mathsf{P}_{\widehat{G}}:=\varepsilon_{\mathsf{B}}(\widehat{G}\cap\mathsf{S}(\mathfrak{T}_{\mathsf{B}}))$. Since
$\mathsf{P}_{\mathsf{B}}$ is finite, the quotient $\mathsf{P}_{\mathsf{B}}/\mathsf{P}_{\widehat{G}}$ is a finite-dimensional vector space over $\mathbb{Z}/2\mathbb{Z}$. Let $p_1,\ldots,p_d\in\mathsf{P}_{\mathsf{B}}$ project
to a basis of this quotient. By surjectivity of $\varepsilon_{\mathsf{B}}$, we may choose $\tau_1,\ldots,\tau_d\in
  \mathsf{S}(\mathfrak{T}_{\mathsf{B}})$ such that $\varepsilon_{\mathsf{B}}(\tau_j)=p_j$.

\begin{defn}[Parity completion]
  \label{def:parity-completion}
  We define the \emph{parity completion} of $\widehat{G}$ by
  \[\widehat{G}^{\mathrm{par}}:=\langle\widehat{G},\tau_1,\ldots,\tau_d\rangle.\]
\end{defn}

The definition does not depend on the chosen representatives for the purpose of full-group completion. Indeed, the AF
parity exact sequence and the containment $\mathsf{A}(\mathfrak{T}_{\mathsf{B}})\leq\widehat{G}$ imply
$\mathsf{S}(\mathfrak{T}_{\mathsf{B}})\leq\widehat{G}^{\mathrm{par}}$.

If $\mathsf{P}_{\widehat{G}}=\mathsf{P}_{\mathsf{B}}$, then $d=0$ and $\widehat{G}^{\mathrm{par}}=\widehat{G}$.

We use the pathwise sections and shifted groups of germs from \cite[Subsection~3.3]{kua26no1}. The singular germs
satisfy the \emph{localization condition} if every shifted singular germ has, after passing to a deeper level, a
representative $\widehat{h}\in\widehat{G}$ preserving the corresponding singular cylinder, representing the shifted
germ there, and having only AF germs outside that cylinder. The AF truncation of $\widehat{h}$ acts identically on the singular cylinder and agrees with $\widehat{h}$ outside it. The AF truncations satisfy the parity condition if their parity classes belong to $\mathsf{P}_{\widehat{G}}$.

The following is \cite[Theorem~4.10]{kua26no1}.

\begin{thm}[Full group completion]
  \label{thm:prelim-full-group-completion}
  Let $\widehat{G}$ be a finitely generated group of bounded type over a simple Bratteli diagram satisfying the finite singular germ condition.  Suppose that $\mathsf{A}(\mathfrak{T}_{\mathsf{B}})\leq\widehat{G}$ and that the singular germs satisfy the localization condition. Then $\widehat{G}^{\mathrm{par}}=\mathsf{F}(\mathfrak{G})$.

  If the AF truncations also satisfy the parity condition, then $\mathsf{F}(\mathfrak{G})=\mathsf{S}(\mathfrak{T}_{\mathsf{B}})\widehat{G}$ and
  \[[\mathsf{F}(\mathfrak{G}):\widehat{G}]=
    [\mathsf{P}_{\mathsf{B}}:\mathsf{P}_{\widehat{G}}]
    <\infty.\]
  In particular, $\widehat{G}=\mathsf{F}(\mathfrak{G})$ if and only if $\mathsf{P}_{\widehat{G}}=\mathsf{P}_{\mathsf{B}}$.
\end{thm}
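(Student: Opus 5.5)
The plan is to reduce an arbitrary element of $\mathsf{F}(\mathfrak{G})$ to an element of $\mathsf{S}(\mathfrak{T}_{\mathsf{B}})$ by multiplying it with finitely many elements of $\widehat{G}$. Each such factor removes one non-AF germ. We then use the containment $\mathsf{A}(\mathfrak{T}_{\mathsf{B}})\leq\widehat{G}$ together with the AF parity exact sequence to control the remaining AF part.

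\emph{Step 1: finiteness of singular germs.} Let $f\in\mathsf{F}(\mathfrak{G})$. By compactness, $f$ is given by finitely many bisections of $\mathfrak{G}$, each a germ-restriction of an element of $\widehat{G}$. Because $\widehat{G}$ is of bounded type and satisfies the finite singular germ condition, every generator has non-AF germs only along the orbits of the finitely many boundary points $\xi_i$. I would show that $f$ has non-AF germs at only finitely many points $x_1,\ldots,x_m$, each in the $\mathfrak{T}_{\mathsf{B}}$-orbit of some $\xi_i$ up to a tail shift. The germ of $f$ at $x_j$ then determines a shifted singular germ.

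\emph{Step 2: removal by localization.} For each $x_j$, pass to a level deep enough that the singular cylinders around $x_1,\ldots,x_m$ are disjoint. The localization condition gives $\widehat{h}_j\in\widehat{G}$ which preserves the singular cylinder of $x_j$, represents the required shifted germ there, and has only AF germs outside it. Multiplying $f$ by a suitable AF element, which exists because $\mathsf{S}(\mathfrak{T}_{\mathsf{B}})$ acts transitively on cylinders of a level tile, we may assume that $f$ maps the cylinder of $x_j$ to itself with the same germ as $\widehat{h}_j$. Then $f\widehat{h}_1^{-1}\cdots\widehat{h}_m^{-1}$ has only AF germs. A compactly supported homeomorphism all of whose germs lie in $\mathfrak{T}_{\mathsf{B}}$ lies in some $\mathsf{S}_n$, hence in $\mathsf{S}(\mathfrak{T}_{\mathsf{B}})$. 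This gives $\mathsf{F}(\mathfrak{G})=\mathsf{S}(\mathfrak{T}_{\mathsf{B}})\widehat{G}$, up to the correction described in Step 3.

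\emph{Step 3: parity bookkeeping.} The exact sequence shows that $\mathsf{S}(\mathfrak{T}_{\mathsf{B}})$ is generated by $\mathsf{A}(\mathfrak{T}_{\mathsf{B}})$ and lifts of a basis of $\mathsf{P}_{\mathsf{B}}$. Since $\mathsf{A}(\mathfrak{T}_{\mathsf{B}})\leq\widehat{G}$, it follows that $\mathsf{S}(\mathfrak{T}_{\mathsf{B}})\leq\widehat{G}^{\mathrm{par}}$, and hence $\widehat{G}^{\mathrm{par}}=\mathsf{F}(\mathfrak{G})$.

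The localization representatives may only be available after composing with AF truncations, and I expect these to be the delicate point. Under the parity condition, the truncations lie in $\varepsilon_{\mathsf{B}}^{-1}(\mathsf{P}_{\widehat{G}})$. Since $\mathsf{A}(\mathfrak{T}_{\mathsf{B}})\leq\widehat{G}$, this preimage equals $\widehat{G}\cap\mathsf{S}(\mathfrak{T}_{\mathsf{B}})$. Hence the truncations are absorbed into $\widehat{G}$, and the factorization $\mathsf{F}(\mathfrak{G})=\mathsf{S}(\mathfrak{T}_{\mathsf{B}})\widehat{G}$ holds without adjoining the $\tau_j$. The index is then
\[[\mathsf{S}(\mathfrak{T}_{\mathsf{B}})\widehat{G}:\widehat{G}]=[\mathsf{S}(\mathfrak{T}_{\mathsf{B}}):\mathsf{S}(\mathfrak{T}_{\mathsf{B}})\cap\widehat{G}]=[\mathsf{P}_{\mathsf{B}}:\mathsf{P}_{\widehat{G}}].\]
The final equivalence is immediate from this formula.

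\emph{Main obstacle.} I expect the main obstacle to be Step 2. One must match the germ of $f$ at $x_j$ exactly with a localized representative, which requires moving $x_j$ back to a standard shifted position. One must also ensure that the AF errors introduced outside the cylinders do not create new singular germs. I would handle this by choosing the level uniformly for all $x_j$ and treating the corrections one cylinder at a time.
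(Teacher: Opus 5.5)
The paper does not prove this statement. It quotes it from Part~I (Theorem~4.10), where, as the introduction indicates, it comes from a canonical form (an AF factor times finitely many localized singular pieces) followed by the AF parity sequence. Your outline has that shape, and Step~1 and the coset count in Step~3 are fine.

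Step~2, however, fails as written, for three reasons.
\begin{enumerate}
\item The localizing element $\widehat h_k$ is only required to have AF germs outside its cylinder, not to act trivially there. So in $f\widehat h_1^{-1}\cdots\widehat h_m^{-1}$ the factors with $k\neq j$ act on $x_j$ and its neighbours by AF germs that need not be trivial. The non-AF germ of $\widehat h_j^{-1}$ at $x_j$ and that of $f$ at $x_j$ are then no longer evaluated along the same trajectory, and the product can again have non-AF germs.
\item The localization hypothesis says nothing about further non-AF germs of $\widehat h_j$ inside its cylinder. Any such germs would survive the product.
\item $\widehat h_j$ lives at the singular cylinder of $\xi_i$, not at $x_j$, so no one-sided multiplication of $f$ by an AF element can align the two germs. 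You must conjugate the localizing element by an AF element carrying that cylinder onto a cylinder around $x_j$. For the factorization through $\mathsf S(\mathfrak T_{\mathsf B})\widehat G$, the conjugator must lie in $\mathsf A(\mathfrak T_{\mathsf B})\le\widehat G$, not merely in $\mathsf S(\mathfrak T_{\mathsf B})$; this is possible because tiles eventually have at least three vertices.
\end{enumerate}

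The repair is the ``one cylinder at a time'' idea from your last paragraph, made into an induction on the number $m$ of non-AF germs, with each correcting element having exactly one non-AF germ. Such an element exists:
\begin{itemize}
\item Suppose $f$ has non-AF germs exactly at $x_1,\ldots,x_m$. Take $g=s_k\cdots s_1\in\widehat G$ with $(g,x_1)=(f,x_1)$, and put $y_j=s_j\cdots s_1(x_1)$.
\item Replace every letter $s_j$ whose germ at $y_{j-1}$ is AF by an element of $\mathsf A(\mathfrak T_{\mathsf B})$ with the same germ at $y_{j-1}$ (a suitable $3$-cycle of finite paths, or the identity). Call the result $\ell\in\widehat G$; it satisfies $(\ell,x_1)=(f,x_1)$.
\item The letters that were kept are singular generators or their inverses, whose only non-AF germ is at $y_{j-1}$. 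Since all letters are bijections, the trajectory of any $z\neq x_1$ is at a point different from $y_{j-1}$ at step $j$. Hence $\ell$ has no other non-AF germ.
\end{itemize}
Then $f\ell^{-1}$ is trivial near $f(x_1)$ and has non-AF germs exactly at $\ell(x_2),\ldots,\ell(x_m)$. Induction, with your compactness remark as the base case, gives $f\in\mathsf S(\mathfrak T_{\mathsf B})\widehat G\ell=\mathsf S(\mathfrak T_{\mathsf B})\widehat G$.

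If you prefer the truncation route, the pieces $\sigma_ja_j^{-1}\widehat h_j\sigma_j^{-1}$ with $\sigma_j\in\mathsf A(\mathfrak T_{\mathsf B})$ have pairwise disjoint supports. They work provided each $\widehat h_j$ has a single non-AF germ, and the parity condition is what places them in $\widehat G$.

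The letter-replacement argument never uses truncations, so your Step~3 absorption argument is not needed to obtain $\mathsf F(\mathfrak G)=\mathsf S(\mathfrak T_{\mathsf B})\widehat G$. The index formula and $\widehat G^{\mathrm{par}}=\mathsf F(\mathfrak G)$ then follow from your coset count together with $\mathsf S(\mathfrak T_{\mathsf B})\le\widehat G^{\mathrm{par}}\le\mathsf F(\mathfrak G)$. You do not address the last inclusion; it holds because $\mathsf A(\mathfrak T_{\mathsf B})\le\widehat G$ gives $\mathfrak T_{\mathsf B}\subseteq\mathfrak G$.
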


\section{Finite alternating groups}\label{sec:alternating-gluing}
This auxiliary section collects the finite alternating group calculations used later. The results involved in this
section are all direct corollaries of elementary results that can be found in a textbook in Abstract Algebra. See, for instance, \cite[Section~6 of Chapter~I]{hungerford2003}. All finite supports below have at least three points. We
extend every permutation identically outside its support.

\subsection{Gluing alternating supports}\label{subsec:gluing-alternating-supports}
Let $X$ be a finite set and let $Y\subseteq X$. We denote the symmetric and alternating groups on $Y$ by $\mathsf{S}(Y)$ and $\mathsf{A}(Y)$, respectively, and regard them as subgroups of $\mathsf{S}(X)$.

\begin{lemma}[Generation from a fixed point]
  \label{lem:fixed-basepoint-generation}
  Let $p\in Y$.  Then $\mathsf{A}(Y)$ is generated by the cycles $(p\,u\,v)$, where $u,v\in Y\setminus\{p\}$ are distinct.
\end{lemma}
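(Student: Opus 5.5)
Let $H$ denote the subgroup generated by the cycles $(p\,u\,v)$ with $u,v\in Y\setminus\{p\}$ distinct. The plan is to show that $H$ contains every $3$-cycle on $Y$, and then to invoke the standard textbook fact that $\mathsf{A}(Y)$ is generated by its $3$-cycles (for $|Y|\geq3$, which is our standing assumption). The inclusion $H\leq\mathsf{A}(Y)$ is immediate, since $3$-cycles are even and supported in $Y$. So everything reduces to the reverse inclusion on generators.

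Take an arbitrary $3$-cycle $(a\,b\,c)$ with $a,b,c\in Y$ distinct. There are two cases. If $p\in\{a,b,c\}$, then after cyclically rotating we have $(a\,b\,c)=(p\,u\,v)$ for some distinct $u,v\neq p$, so it is a generator. If $p\notin\{a,b,c\}$, I would verify the explicit conjugation identity
\[
(p\,a\,b)\,(p\,b\,c)\,(p\,a\,b)^{-1}=(a\,b\,c)\cdot\text{?},
\]
or, more cleanly, use the relabelling rule $\sigma(x\,y\,z)\sigma^{-1}=(\sigma(x)\,\sigma(y)\,\sigma(z))$. Taking $\sigma=(p\,a\,b)^{-1}=(p\,b\,a)$ gives
\[
\sigma(p\,b\,c)\sigma^{-1}=(\sigma(p)\,\sigma(b)\,\sigma(c))=(b\,a\,c),
\]
so $(b\,a\,c)=(a\,c\,b)\in H$. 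Its inverse $(a\,b\,c)$ then also lies in $H$. Alternatively, one can check the product $(p\,a\,b)(p\,b\,c)(p\,c\,a)$ directly, reading right to left as the paper's convention requires; either route is a one-line check.

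The only step needing care is getting the composition convention right, so I would simply rely on the conjugation rule, which is convention-independent. With every $3$-cycle in $H$, we get $H=\mathsf{A}(Y)$. There is no real obstacle here.
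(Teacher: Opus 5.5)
Your argument is correct and is essentially the paper's: both reduce to the fact that $\mathsf{A}(Y)$ is generated by $3$-cycles and then write each $3$-cycle avoiding $p$ in terms of the generators. The paper does this with the product identity $(u\,v\,w)=(p\,u\,v)(p\,v\,w)$ (right-to-left composition), and you do it by conjugating $(p\,b\,c)$ by $(p\,b\,a)$ and inverting. You should delete the ``alternative'' check and the placeholder display with the question mark. Under the paper's convention $(p\,a\,b)(p\,b\,c)(p\,c\,a)=(p\,a)(b\,c)$, and under the opposite convention it is the identity, so it is never $(a\,b\,c)$. Your conjugation argument stands on its own without them.
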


\begin{proof}
  The group $\mathsf{A}(Y)$ is generated by $3$-cycles. If $u,v,w\in Y\setminus\{p\}$ are distinct, then $(u\,v\,w)=(p\,u\,v)(p\,v\,w)$.
\end{proof}

\begin{lemma}[Overlap gluing]
  \label{lem:overlap-gluing}
  Let $U,V\subseteq X$ satisfy $U\cap V\neq\varnothing$. Then $\langle\mathsf{A}(U),\mathsf{A}(V)\rangle=\mathsf{A}(U\cup V)$.
\end{lemma}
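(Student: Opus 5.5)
The inclusion $\langle\mathsf{A}(U),\mathsf{A}(V)\rangle\leq\mathsf{A}(U\cup V)$ is immediate, since both generating subgroups consist of even permutations supported in $U\cup V$. For the reverse inclusion, put $H=\langle\mathsf{A}(U),\mathsf{A}(V)\rangle$. I plan to enlarge $U$ one or two points at a time by an induction on $|V\setminus W|$, where $W\supseteq U$ is a set with $\mathsf{A}(W)\leq H$. I start with $W=U$ and use the standing convention of this section that all supports have at least three points, so $|U|,|V|\geq3$. At each stage $W\cap V\neq\varnothing$.

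\emph{Key step (one-point extension).} Suppose $\mathsf{A}(W)\leq H$ and $H$ contains a $3$-cycle $(p\,q\,v)$ with $p,q\in W$ and $v\notin W$. I claim that $\mathsf{A}(W\cup\{v\})\leq H$.
\begin{itemize}
\item Conjugating $(p\,q\,v)$ by elements of $\mathsf{A}(W)$, which fix $v$, gives $(g(p)\,g(q)\,v)$.
\item For $|W|\geq4$, $\mathsf{A}(W)$ is $2$-transitive on $W$, so every ordered pair $(a,b)$ of distinct points of $W$ arises. For $|W|=3$, only half the ordered pairs arise, but the remaining cycles are the inverses $(b\,a\,v)=(a\,b\,v)^{-1}$.
\item Hence $H$ contains all $(a\,b\,v)$ with $a,b\in W$.
\item Every $3$-cycle on $W\cup\{v\}$ either lies in $\mathsf{A}(W)$ or has this form. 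Since $\mathsf{A}(W\cup\{v\})$ is generated by $3$-cycles (as in Lemma~\ref{lem:fixed-basepoint-generation}), the claim follows.
\end{itemize}

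\emph{Induction.} If $V\subseteq W$, then $W=U\cup V$ at the end and we are done.
\begin{itemize}
\item If $|W\cap V|\geq2$, choose distinct $p,q\in W\cap V$ and $v\in V\setminus W$. Then $(p\,q\,v)\in\mathsf{A}(V)\leq H$, and the key step replaces $W$ by $W\cup\{v\}$.
\item The remaining case is $W\cap V=\{p\}$, which can occur only at the start. Since $|V|\geq3$, pick distinct $v,v'\in V\setminus\{p\}$, both outside $W$. Pick $a\in W\setminus\{p\}$ and $g\in\mathsf{A}(W)$ with $g(p)=a$. Then $H$ contains $(p\,v\,v')$ and $g(p\,v\,v')g^{-1}=(a\,v\,v')$.
\end{itemize}

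In that last case, these two $3$-cycles generate a subgroup of $\mathsf{A}(\{p,a,v,v'\})$. This subgroup is transitive on the four points and has order greater than $3$. The subgroups of $\mathsf{A}_4$ have orders $1,2,3,4,12$, and a transitive subgroup has order divisible by $4$, so it is all of $\mathsf{A}_4$. Now $\{p,a,v,v'\}$ meets $W$ in the two points $p,a$. Applying the key step twice, first adjoining $v$ via $(p\,a\,v)$ and then $v'$, gives $\mathsf{A}(W\cup\{v,v'\})\leq H$. The new $W$ meets $V$ in at least three points, so we are back in the first case.

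Each step strictly decreases $|V\setminus W|$, so the induction ends with $W=U\cup V$ and $\mathsf{A}(U\cup V)\leq H$.

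The main obstacle is the single-point-overlap case $|U\cap V|=1$. There no $3$-cycle of $\mathsf{A}(V)$ has two points in $U$, so the one-point extension cannot start directly. The conjugation trick together with the small $\mathsf{A}_4$ computation is what handles it, and the argument relies on the convention $|U|,|V|\geq3$.
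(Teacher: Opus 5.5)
Your proof is correct, and it takes a genuinely different route from the paper's. The paper builds one explicit mixed cycle. It fixes $p\in U\cap V$, $a\in U\setminus V$ and $b\in V\setminus U$. When the overlap contains a second point $q$, it uses $(p\,q\,b)(p\,a\,q)=(p\,a\,b)$. When $U\cap V=\{p\}$, it takes $\alpha=(p\,a\,c)$ and $\beta=(p\,d\,b)$ and computes $\alpha\beta\alpha^{-1}=(a\,d\,b)$ and $(a\,d\,b)\beta^{-1}=(p\,a\,d)$. It then says that conjugating by $\mathsf{A}(U)$ and $\mathsf{A}(V)$, together with Lemma~\ref{lem:fixed-basepoint-generation}, yields all $3$-cycles on $U\cup V$.

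You instead absorb the points of $V$ into $U$ one at a time, using your one-point extension step, which rests on the $2$-transitivity of $\mathsf{A}(W)$. The paper's argument is shorter, but its final generation step is only sketched. In the smallest cases $|U|=3$ or $|V|=3$, the stabilizer of $p$ is trivial, so conjugation must be supplemented by inverses or by varying $a$ and $b$. Your explicit handling of $|W|=3$ settles exactly this point. On the other hand, your single-overlap case is heavier than the paper's.

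There is one small slip in that case. Being \emph{transitive of order greater than $3$} does not exclude the Klein four-group, whose order is also divisible by $4$. The missing observation is that the subgroup contains a $3$-cycle, so $3$ divides its order; since $4$ also divides it, the order is $12$. You can also skip $\mathsf{A}_4$ entirely: $(a\,v\,v')^{-1}(p\,v\,v')=(p\,a\,v')$ is already a $3$-cycle with two points in $W$, so your key step applies at once.
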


\begin{proof}
  The assertion is immediate if one support is contained in the other. Otherwise, choose $p\in U\cap V$, $a\in U\setminus V$ and $b\in V\setminus U$.  If $q\in(U\cap V)\setminus\{p\}$, then $(p\,q\,b)(p\,a\,q)=(p\,a\,b)$.
  If $U\cap V=\{p\}$, choose $c\in U\setminus\{p,a\}$ and $d\in V\setminus\{p,b\}$. Put $\alpha=(p\,a\,c)$ and $\beta=(p\,d\,b)$. We have $\alpha\beta\alpha^{-1}=(a\,d\,b)$, and hence $(a\,d\,b)\beta^{-1}=(p\,a\,d)$ up to orientation. Multiplying by $(p\,d\,b)$ gives a mixed cycle $(p\,a\,b)$.

  Conjugating these mixed cycles by the two internal alternating groups and using Lemma~\ref{lem:fixed-basepoint-generation}, we obtain all $3$-cycles on $U\cup V$.
\end{proof}

\begin{cor}[Common-point gluing]
  \label{cor:common-anchor-gluing}
  Let $U,V\subseteq X$ be disjoint and let $p\notin U\cup V$.  Then
  $\langle\mathsf{A}(U\cup\{p\}),\mathsf{A}(V\cup\{p\})\rangle=\mathsf{A}(U\cup V\cup\{p\})$.
\end{cor}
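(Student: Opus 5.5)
The corollary follows directly from Lemma~\ref{lem:overlap-gluing}, applied to the two supports $U':=U\cup\{p\}$ and $V':=V\cup\{p\}$. Since $U$ and $V$ are disjoint and $p\notin U\cup V$, we have $U'\cap V'=\{p\}$. In particular $U'\cap V'\neq\varnothing$, so the overlap hypothesis of Lemma~\ref{lem:overlap-gluing} holds. The lemma then gives
\[
\langle\mathsf{A}(U'),\mathsf{A}(V')\rangle=\mathsf{A}(U'\cup V')=\mathsf{A}(U\cup V\cup\{p\}),
\]
which is the claim.

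Two side conditions need checking. The first is the section's standing convention that every finite support has at least three points. This applies to $U'$ and $V'$, so $|U|\geq2$ and $|V|\geq2$. This is exactly what the single-point-overlap case in the proof of Lemma~\ref{lem:overlap-gluing} requires: it chooses $a\in U'\setminus V'$, $c\in U'\setminus\{p,a\}$, and symmetrically $b,d$ in $V'$. Such choices exist because $U'\setminus V'=U$ and $V'\setminus U'=V$, and each of these has at least two elements.

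The second concerns the possibility that one support is contained in the other. Since $U'\cap V'=\{p\}$ and both sets have at least three points, neither $U'\subseteq V'$ nor $V'\subseteq U'$ can hold. So we are always in the genuine single-point-overlap case of the lemma, where the mixed cycle $(p\,a\,b)$ is built explicitly. The lemma's trivial containment case never arises here.

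There is no real obstacle. The only step needing care is confirming the cardinality requirement implicit in the single-point case of Lemma~\ref{lem:overlap-gluing}, and the standing convention of the section supplies it.
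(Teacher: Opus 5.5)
Your proposal is correct and matches the paper's proof, which simply notes that $U\cup\{p\}$ and $V\cup\{p\}$ meet at $p$ and applies Lemma~\ref{lem:overlap-gluing}. Your extra check is a worthwhile clarification: the section's standing three-point convention gives $|U|,|V|\geq 2$, and both the single-point-overlap case of that lemma and the statement itself need this.
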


\begin{proof}
  The supports meet at $p$, so we apply Lemma~\ref{lem:overlap-gluing}.
\end{proof}

\begin{cor}[Connected-cover gluing]
  \label{cor:connected-cover-gluing}
  Let $Y_1,\ldots,Y_r\subseteq X$ cover $Y$, and suppose that the graph joining $i$ to $j$ whenever $Y_i\cap Y_j\neq\varnothing$ is connected. Then $\langle\mathsf{A}(Y_1),\ldots,\mathsf{A}(Y_r)\rangle=\mathsf{A}(Y)$.
\end{cor}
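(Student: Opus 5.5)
The plan is to prove Corollary~\ref{cor:connected-cover-gluing} by induction along the connected intersection graph, using Lemma~\ref{lem:overlap-gluing} at each step. I will write $\Gamma$ for the graph on $\{1,\ldots,r\}$ in which $i$ and $j$ are adjacent whenever $Y_i\cap Y_j\neq\varnothing$. The inclusion $\langle\mathsf{A}(Y_1),\ldots,\mathsf{A}(Y_r)\rangle\leq\mathsf{A}(Y)$ is immediate, since each $Y_i\subseteq Y$ and each $\mathsf{A}(Y_i)$ sits inside $\mathsf{A}(Y)$. Only the reverse inclusion needs an argument.

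For the reverse inclusion, I first reorder the indices using connectivity of $\Gamma$, for instance via a breadth-first search from vertex $1$. After reordering, each $i\geq 2$ is adjacent in $\Gamma$ to some $j<i$. Set $Z_k:=Y_1\cup\cdots\cup Y_k$ and $H_k:=\langle\mathsf{A}(Y_1),\ldots,\mathsf{A}(Y_k)\rangle$. I then prove $H_k=\mathsf{A}(Z_k)$ by induction on $k$.

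The base case $k=1$ is trivial. For the inductive step, pick $j<k+1$ adjacent to $k+1$. Then $Y_{k+1}\cap Z_k\supseteq Y_{k+1}\cap Y_j\neq\varnothing$. Combining the inductive hypothesis with Lemma~\ref{lem:overlap-gluing}, applied to $U=Z_k$ and $V=Y_{k+1}$, gives
\[H_{k+1}=\langle H_k,\mathsf{A}(Y_{k+1})\rangle=\langle\mathsf{A}(Z_k),\mathsf{A}(Y_{k+1})\rangle=\mathsf{A}(Z_{k+1}).\]
Taking $k=r$ gives $Z_r=Y$, which proves the claim.

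The only delicate point is the paper's standing convention that supports have at least three points, which Lemma~\ref{lem:overlap-gluing} uses when it picks auxiliary points $c$ and $d$. I will handle it as follows. The unions $Z_k$ only grow, so once $|Z_k|\geq 3$ the convention holds for $Z_k$. If some $Y_i$ has fewer than three points, then $\mathsf{A}(Y_i)$ is trivial, and Lemma~\ref{lem:overlap-gluing} may fail; for example, for $U=\{1,2\}$ and $V=\{2,3\}$ the generated group is trivial. I will therefore state the corollary under the section's blanket hypothesis that every $Y_i$ has at least three points. Under that hypothesis the overlap lemma applies verbatim at each step, and the obstacle disappears.
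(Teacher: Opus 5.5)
Your proof is correct and is essentially the paper's argument: the paper chooses a spanning tree of the intersection graph and applies Lemma~\ref{lem:overlap-gluing} successively along it, which is exactly your ordered induction gluing the accumulated union $Z_k$ to $Y_{k+1}$. Your requirement that every $Y_i$ have at least three points is the section's standing convention, and your example with $U=\{1,2\}$, $V=\{2,3\}$ correctly shows that it cannot be dropped.
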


\begin{proof}
  Choose a spanning tree of the intersection graph and apply Lemma~\ref{lem:overlap-gluing} successively along the tree.
\end{proof}

\begin{cor}[Adjoining points through a common support]
  \label{cor:adjoining-points-common-support}
  Let $U,V\subseteq X$ be disjoint. Suppose that a subgroup $H\leq\mathsf{A}(U\cup V)$ contains $\mathsf{A}(U\cup\{v\})$ for every $v\in V$. Then $\mathsf{A}(U\cup V)\leq H$.
\end{cor}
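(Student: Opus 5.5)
The idea is to glue the groups $\mathsf{A}(U\cup\{v\})$, $v\in V$, together through the common support $U$, using the connected-cover gluing of Corollary~\ref{cor:connected-cover-gluing}.

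First, the degenerate cases. If $V=\varnothing$, the hypothesis says nothing, and we need $\mathsf{A}(U)\leq H$. I read the statement with $V\neq\varnothing$, or else with the standing convention that $H$ contains $\mathsf{A}(U)$. When $V\neq\varnothing$ this is automatic, since $\mathsf{A}(U)\leq\mathsf{A}(U\cup\{v\})\leq H$ for any $v\in V$. So assume $V\neq\varnothing$, and also $U\neq\varnothing$. If $U$ is empty, each $\mathsf{A}(\{v\})$ is trivial and the statement fails, so $U\neq\varnothing$ is implicit in the section's convention that supports have at least three points. Under that convention $|U\cup\{v\}|\geq3$, so $|U|\geq2$.

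Main step. Put $Y_v:=U\cup\{v\}$ for $v\in V$. These sets cover $U\cup V$. Any two of them meet, since $Y_v\cap Y_{v'}\supseteq U\neq\varnothing$, so their intersection graph is complete and in particular connected. Corollary~\ref{cor:connected-cover-gluing} then gives
\[\mathsf{A}(U\cup V)=\bigl\langle \mathsf{A}(Y_v):v\in V\bigr\rangle .\]
Each generating subgroup lies in $H$ by hypothesis, so $\mathsf{A}(U\cup V)\leq H$. Together with $H\leq\mathsf{A}(U\cup V)$ this gives equality.

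The only point needing care is the degenerate-support issue above. Lemma~\ref{lem:overlap-gluing} requires nonempty overlaps, which holds because $U\neq\varnothing$. It also needs each $\mathsf{A}(Y_v)$ to be meaningful, i.e. $|Y_v|\geq3$ under the standing convention. If $|U|=1$ and one prefers not to rely on that convention, each $\mathsf{A}(Y_v)$ is trivial. In that case I would instead argue directly with $3$-cycles $(p\,u\,v)$ as in Lemma~\ref{lem:fixed-basepoint-generation}, and note that the claim genuinely needs $|U|\geq2$ to be nontrivial.
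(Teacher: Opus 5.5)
Your argument is correct and is exactly the paper's proof: the sets $U\cup\{v\}$ all contain the common set $U$, so their intersection graph is connected, and Corollary~\ref{cor:connected-cover-gluing} gives $\mathsf{A}(U\cup V)\leq H$. One correction to your last paragraph: when $|U|=1$ and $|V|\geq2$ there is no direct $3$-cycle argument to fall back on, because the statement is then false (take $H$ trivial), so the standing convention $|U\cup\{v\}|\geq3$ and the assumption $V\neq\varnothing$ are genuinely needed.
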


\begin{proof}
  The supports $U\cup\{v\}$ have the common intersection $U$, so their intersection graph is connected.
\end{proof}

\begin{lemma}[One-bridge gluing]
  \label{lem:one-bridge-gluing}
  Let $U,V\subseteq X$ be disjoint. Suppose that a subgroup $H\leq\mathsf{A}(U\sqcup V)$ contains $\mathsf{A}(U)$, $\mathsf{A}(V)$, and a $3$-cycle $c$ whose support meets both $U$ and $V$. Then
  \[\mathsf{A}(U\sqcup V)\leq H.\]
\end{lemma}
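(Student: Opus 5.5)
The plan is to reduce the statement to Lemma~\ref{lem:overlap-gluing}: use the bridge $3$-cycle $c$ together with one of the two internal alternating groups to produce an alternating group whose support meets both $U$ and $V$. Write $c=(x\,y\,z)$. Since its support meets both $U$ and $V$ and lies in $U\sqcup V$, after relabeling (and possibly swapping the roles of $U$ and $V$) we may assume $x,y\in U$ and $z\in V$. The case $x\in U$, $y,z\in V$ is symmetric.

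The key step is to show that $\mathsf{A}(U\cup\{z\})\leq H$. We have $\mathsf{A}(U)\leq H$ and $c\in H$, and $U\cap\{x,y,z\}=\{x,y\}\neq\varnothing$. However, $\langle c\rangle$ is not all of $\mathsf{A}(\{x,y,z\})$ in general, so Lemma~\ref{lem:overlap-gluing} does not apply verbatim. Note that $\mathsf{A}(\{x,y,z\})=\{1,c,c^{-1}\}$ is cyclic of order $3$, so in fact $\langle c\rangle=\mathsf{A}(\{x,y,z\})$ on the nose. Hence Lemma~\ref{lem:overlap-gluing} does apply with the supports $U$ and $\{x,y,z\}$, which meet in $\{x,y\}$. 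It gives
\[
\mathsf{A}(U\cup\{z\})=\langle\mathsf{A}(U),\mathsf{A}(\{x,y,z\})\rangle\leq H .
\]

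Next, the supports $U\cup\{z\}$ and $V$ meet in $\{z\}$, which is nonempty. A second application of Lemma~\ref{lem:overlap-gluing} gives
\[
\mathsf{A}(U\sqcup V)=\langle\mathsf{A}(U\cup\{z\}),\mathsf{A}(V)\rangle\leq H .
\]

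In the symmetric case, the same two steps are run with the roles of $U$ and $V$ exchanged.

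The only point requiring care is the standing convention that supports have at least three points: $U$ and $V$ each have at least three points, and $\{x,y,z\}$ has exactly three. The proof of Lemma~\ref{lem:overlap-gluing} uses auxiliary points $c\in U\setminus\{p,a\}$ and $d\in V\setminus\{p,b\}$ in the single-point-intersection case, and these exist when each support has at least three points. So there is no real obstacle, only this bookkeeping check. The main conceptual step is recognizing that a single $3$-cycle already generates the full alternating group on its support, so that the bridge cycle can be fed directly into the overlap gluing lemma.
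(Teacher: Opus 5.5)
Your proof is correct and is essentially the paper's argument: the paper likewise notes that $\langle c\rangle=\mathsf{A}(\operatorname{supp}(c))$ and then applies connected-cover gluing to the chain $U,\operatorname{supp}(c),V$. Unpacked, that is exactly your two successive applications of overlap gluing, and the only blemish is your sentence claiming $\langle c\rangle$ is not all of $\mathsf{A}(\{x,y,z\})$ in general, which is false, which you retract at once, and which should simply be deleted.
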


\begin{proof}
  Put $Z=\operatorname{supp}(c)$. Since $c$ is a $3$-cycle, $\mathsf{A}(Z)=\langle c\rangle\leq H$. The supports $U,Z,V$ form a connected cover of $U\sqcup V$, so the result follows from
  Corollary~\ref{cor:connected-cover-gluing}.
\end{proof}

\subsection{Diagonal gluing}\label{subsec:diagonal-glue}
Let $I$ be a finite index set, let $Z$ be a finite model set, and let $\phi_i:Z\to Z_i$, $i\in I$, be bijections onto
pairwise disjoint copies. For $g\in\mathsf{S}(Z)$, put $\Delta_I(g):=\prod_{i\in I}\phi_i g\phi_i^{-1}$, and for
$Y\subseteq Z$, put $\Delta_I\mathsf{A}(Y):=\{\Delta_I(g):g\in\mathsf{A}(Y)\}$.

\begin{lemma}[Diagonal lifting]
  \label{lem:diagonal-lifting}
  Let $K_1,\ldots,K_r\leq\mathsf{S}(Z)$.  Then
  \[
    \left\langle
    \Delta_I(K_1),\ldots,\Delta_I(K_r)
    \right\rangle
    =
    \Delta_I\left(\left\langle K_1,\ldots,K_r\right\rangle\right).
  \]
\end{lemma}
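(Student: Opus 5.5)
The plan is to show that $\Delta_I:\mathsf{S}(Z)\to\mathsf{S}(X)$ is an injective group homomorphism, and then use the general fact that a homomorphism sends the subgroup generated by a set onto the subgroup generated by the image.

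\textbf{Step 1: $\Delta_I$ is a homomorphism.} Since the copies $Z_i$ are pairwise disjoint, the conjugates $\phi_i g\phi_i^{-1}$ for different $i$ have disjoint supports, each contained in $Z_i$. Hence they commute, and the product defining $\Delta_I(g)$ does not depend on the order. For $g,h\in\mathsf{S}(Z)$,
\[
\Delta_I(gh)=\prod_{i\in I}\phi_i g\phi_i^{-1}\,\phi_i h\phi_i^{-1}
=\Bigl(\prod_{i\in I}\phi_i g\phi_i^{-1}\Bigr)\Bigl(\prod_{i\in I}\phi_i h\phi_i^{-1}\Bigr)=\Delta_I(g)\Delta_I(h).
\]
The middle equality uses that factors with different indices commute, so the two products can be rearranged freely. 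Injectivity is not needed for the displayed equality, although it holds whenever $I\neq\varnothing$, because the restriction of $\Delta_I(g)$ to $Z_i$ recovers $g$ through $\phi_i$.

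\textbf{Step 2: images of generated subgroups.} For any homomorphism $f$ and any subsets $K_1,\ldots,K_r$,
\[
f\bigl(\langle K_1,\ldots,K_r\rangle\bigr)=\langle f(K_1),\ldots,f(K_r)\rangle .
\]
The inclusion $\supseteq$ holds because the left side is a subgroup containing every $f(K_j)$. The inclusion $\subseteq$ holds because every element of $\langle K_1,\ldots,K_r\rangle$ is a finite word in elements of the $K_j$ and their inverses, and $f$ maps such a word to the corresponding word in the images. Applying this with $f=\Delta_I$ gives the statement.

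The only point that needs any care is the commutation of the factors in Step 1, which follows directly from the disjointness of the copies $Z_i$. There is no real obstacle beyond that.
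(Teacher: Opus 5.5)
Your proof is correct and follows the paper's own argument, which consists of the single observation that $\Delta_I$ is an injective homomorphism. You additionally verify the homomorphism property via disjointness of the copies $Z_i$, spell out the image-of-generated-subgroup step, and correctly remark that injectivity is not actually needed for the displayed equality.
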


\begin{proof}
  The map $\Delta_I:\mathsf{S}(Z)\to\mathsf{S}(\bigsqcup_{i\in I}Z_i)$ is an injective homomorphism.
\end{proof}

\begin{cor}[Diagonal connected-cover gluing]
  \label{cor:diagonal-connected-cover-gluing}
  Let $Y_1,\ldots,Y_r\subseteq Z$ cover $Y$ and have connected intersection graph. Then $\langle\Delta_I\mathsf{A}(Y_1),\ldots,\Delta_I\mathsf{A}(Y_r)\rangle=\Delta_I\mathsf{A}(Y)$.
\end{cor}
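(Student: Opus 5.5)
The plan is to combine Lemma~\ref{lem:diagonal-lifting} with the non-diagonal connected-cover gluing of Corollary~\ref{cor:connected-cover-gluing}, applied inside the model set $Z$. Taking $K_j:=\mathsf{A}(Y_j)\leq\mathsf{S}(Z)$ for $j=1,\ldots,r$, Lemma~\ref{lem:diagonal-lifting} will give
\[
  \left\langle \Delta_I\mathsf{A}(Y_1),\ldots,\Delta_I\mathsf{A}(Y_r)\right\rangle
  =
  \Delta_I\left(\left\langle \mathsf{A}(Y_1),\ldots,\mathsf{A}(Y_r)\right\rangle\right),
\]
where we use that $\Delta_I\mathsf{A}(Y_j)$ is, by definition, the image $\Delta_I(K_j)$.

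Next I will identify the group in parentheses. The sets $Y_1,\ldots,Y_r$ are subsets of the finite set $Z$; they cover $Y$ and their intersection graph is connected. Applying Corollary~\ref{cor:connected-cover-gluing} with $X=Z$ gives $\langle\mathsf{A}(Y_1),\ldots,\mathsf{A}(Y_r)\rangle=\mathsf{A}(Y)$. Substituting this into the display gives $\Delta_I\mathsf{A}(Y)$, which is the claim.

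There is no real obstacle here. The only point to check is that Corollary~\ref{cor:connected-cover-gluing} is applied in the model set $Z$ and not in the disjoint union of the copies. This is needed because the copies $Z_i$ are pairwise disjoint, so the supports $\phi_i(Y_j)$ taken across different indices $i$ do not meet, and gluing directly in the union would fail. Working in $Z$ first and then transporting along the injective homomorphism $\Delta_I$ avoids this, and it yields the synchronized diagonal group rather than the full product of the alternating groups on the copies. As in the standing convention, all supports are assumed to have at least three points.
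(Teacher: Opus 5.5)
Your proposal is correct and is the same argument as the paper's: apply Corollary~\ref{cor:connected-cover-gluing} inside the model set $Z$ to get $\langle\mathsf{A}(Y_1),\ldots,\mathsf{A}(Y_r)\rangle=\mathsf{A}(Y)$, then use Lemma~\ref{lem:diagonal-lifting} to transport this along the injective homomorphism $\Delta_I$. The order in which you invoke the two results does not matter.
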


\begin{proof}
  Apply Corollary~\ref{cor:connected-cover-gluing} in the model copy and then Lemma~\ref{lem:diagonal-lifting}.
\end{proof}

\begin{cor}[Diagonal common-point gluing]
  \label{cor:diagonal-common-anchor-gluing}
  Let $U,V\subseteq Z$ be disjoint and let $p\notin U\cup V$.  Then
  \[
    \left\langle
    \Delta_I\mathsf{A}(U\cup\{p\}),
    \Delta_I\mathsf{A}(V\cup\{p\})
    \right\rangle
    =
    \Delta_I\mathsf{A}(U\cup V\cup\{p\}).
  \]
\end{cor}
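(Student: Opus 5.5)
The statement is the diagonal analogue of Corollary~\ref{cor:common-anchor-gluing}, and I will prove it the same way Corollary~\ref{cor:diagonal-connected-cover-gluing} was proved: do the gluing in the model copy $Z$ and then transport it along $\Delta_I$.

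The first step is to work inside $\mathsf{S}(Z)$. The sets $U\cup\{p\}$ and $V\cup\{p\}$ are subsets of $Z$ that meet in the point $p$. Corollary~\ref{cor:common-anchor-gluing}, applied with $X=Z$, therefore gives
\[
\langle\mathsf{A}(U\cup\{p\}),\mathsf{A}(V\cup\{p\})\rangle=\mathsf{A}(U\cup V\cup\{p\})
\]
in $\mathsf{S}(Z)$.

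The second step is to apply Lemma~\ref{lem:diagonal-lifting} with $r=2$, taking $K_1=\mathsf{A}(U\cup\{p\})$ and $K_2=\mathsf{A}(V\cup\{p\})$. This gives
\[
\langle\Delta_I(K_1),\Delta_I(K_2)\rangle=\Delta_I(\langle K_1,K_2\rangle)=\Delta_I\mathsf{A}(U\cup V\cup\{p\}).
\]
By the definition of $\Delta_I\mathsf{A}(Y)$ as $\{\Delta_I(g):g\in\mathsf{A}(Y)\}$, the left-hand side is exactly the group in the statement.

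Alternatively, one could note that the two supports form a connected two-element cover of $U\cup V\cup\{p\}$ and invoke Corollary~\ref{cor:diagonal-connected-cover-gluing} directly.

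There is no real obstacle. The only point to check is the standing convention that every support has at least three points, so that the alternating groups involved are the ones covered by the earlier lemmas. Even without that convention, the generation statement holds trivially, since $\mathsf{A}$ of a set with at most two points is trivial and Lemma~\ref{lem:overlap-gluing} covers the remaining cases.
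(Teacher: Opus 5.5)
Your argument is correct and is exactly the paper's proof: apply Corollary~\ref{cor:common-anchor-gluing} in the model copy $Z$, then transport the result through the injective homomorphism $\Delta_I$ using Lemma~\ref{lem:diagonal-lifting}. Your closing remark is false, however: the three-point convention is genuinely needed. It is already used in the proof of Lemma~\ref{lem:overlap-gluing}, and for $U=\{a\}$, $V=\{b\}$ the left-hand side is trivial while $\Delta_I\mathsf{A}(\{a,b,p\})$ has order $3$. You should drop that remark rather than claim the statement holds without the convention.
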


\begin{proof}
  Apply Corollary~\ref{cor:common-anchor-gluing} in the model copy and then Lemma~\ref{lem:diagonal-lifting}.
\end{proof}

\begin{cor}[Diagonal one-bridge gluing]
  \label{cor:diagonal-one-bridge-gluing}
  Let $U,V\subseteq Z$ be disjoint. Suppose that a subgroup $H$ contains
  $\Delta_I\mathsf{A}(U)$, $\Delta_I\mathsf{A}(V)$, and $\Delta_I(c)$, where $c$ is a $3$-cycle whose support meets both $U$ and $V$. Then
  \[\Delta_I\mathsf{A}(U\sqcup V)\leq H.\]
\end{cor}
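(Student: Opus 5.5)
The plan is to carry the argument out in the model set $Z$ and then transport it through the diagonal map, just as in the proofs of Corollaries~\ref{cor:diagonal-connected-cover-gluing} and~\ref{cor:diagonal-common-anchor-gluing}. The subgroup $H$ need not lie in the image of $\Delta_I$. However, we only need the subgroup generated by the three given families of elements, and this subgroup does lie in that image. Put
\[
H_0:=\bigl\langle \Delta_I\mathsf{A}(U),\ \Delta_I\mathsf{A}(V),\ \Delta_I(c)\bigr\rangle\leq H.
\]
It then suffices to show that $\Delta_I\mathsf{A}(U\sqcup V)\leq H_0$.

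Since $c$ is a $3$-cycle, the set $\Delta_I\mathsf{A}(\operatorname{supp}(c))=\{\Delta_I(c^k):k=0,1,2\}$ is the cyclic group generated by $\Delta_I(c)$. Hence
\[
H_0=\bigl\langle \Delta_I\mathsf{A}(U),\ \Delta_I\mathsf{A}(\operatorname{supp}(c)),\ \Delta_I\mathsf{A}(V)\bigr\rangle .
\]
Lemma~\ref{lem:diagonal-lifting} shows that this equals $\Delta_I\bigl(\langle\mathsf{A}(U),\mathsf{A}(\operatorname{supp}(c)),\mathsf{A}(V)\rangle\bigr)$. All of these groups lie in $\mathsf{S}(Z)$, so the lemma applies.

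In the model set, let $Y_0:=U\cup\operatorname{supp}(c)\cup V$. The sets $U,\operatorname{supp}(c),V$ cover $Y_0$. Their intersection graph is connected, because $\operatorname{supp}(c)$ meets both $U$ and $V$ by hypothesis. Corollary~\ref{cor:connected-cover-gluing} therefore gives $\langle\mathsf{A}(U),\mathsf{A}(\operatorname{supp}(c)),\mathsf{A}(V)\rangle=\mathsf{A}(Y_0)$. Alternatively, Corollary~\ref{cor:diagonal-connected-cover-gluing} gives the same conclusion directly after diagonalizing.

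There is one subtlety, and it is the main point to check. The cycle $c$ may have a point outside $U\sqcup V$, in which case $Y_0$ is strictly larger than $U\sqcup V$. This causes no problem, since $\mathsf{A}(U\sqcup V)\leq\mathsf{A}(Y_0)$ and applying $\Delta_I$ preserves the inclusion. We conclude
\[
\Delta_I\mathsf{A}(U\sqcup V)\leq\Delta_I\mathsf{A}(Y_0)=H_0\leq H.
\]
If $\operatorname{supp}(c)\subseteq U\sqcup V$, which is the intended use, equality $Y_0=U\sqcup V$ holds and the argument is exactly the diagonal version of Lemma~\ref{lem:one-bridge-gluing}.
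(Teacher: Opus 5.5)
Your proof is correct and follows the paper's route: the paper applies Lemma~\ref{lem:one-bridge-gluing} in the model set and then lifts with Lemma~\ref{lem:diagonal-lifting}, and you simply inline that lemma's connected-cover argument before lifting. Your two extra precautions are sound tightenings rather than a different approach. Passing to the subgroup $H_0$ generated by the diagonal elements is one. The other is allowing $\operatorname{supp}(c)\not\subseteq U\sqcup V$, which the model lemma's hypothesis $H\leq\mathsf{A}(U\sqcup V)$ excludes but the diagonal statement does not.
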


\begin{proof}
  Apply Lemma~\ref{lem:one-bridge-gluing} in the model copy and then Lemma~\ref{lem:diagonal-lifting}.
\end{proof}

\section{Selector-absorbing alternating systems}
\label{sec:selector-absorbing-alternating-systems}

Let $\mathsf{B}$ be a simple Bratteli diagram, $\mathcal{T}_{v,n}$ be a level-$n$ tile, $\mathsf{A}_n=\prod_{v\in
    V_{n+1}}\mathsf{A}(\mathcal{T}_{v,n})$, and $\iota_{n,m}$ denotes the canonical diagonal embedding. Let $\widehat{G}$ be a group acting on $\Omega(\mathsf{B})$ by the tile inflation under consideration.

Our aim is to produce, from finite alternating groups already contained in $\widehat{G}$ at level $n$, the alternating groups required at level $n+1$. The passage from one level to the next has two parts. First, $\iota_{n,n+1}$ copies every available permutation to all its descendant supports. Second, transformations in $\widehat{G}$ acting at the new connectors create bridge cycles between these descendant supports. We then apply the gluing lemmas of Section~\ref{sec:alternating-gluing}.

After increasing the initial level, we assume that every alternating support used below has at least three points. The finitely many preceding levels are included in the initial data.

\subsection{Alternating data and diagonal inheritance}
\label{subsec:alternating-data-diagonal-inheritance}

The alternating groups inherited from one level to the next are generally not independent (i.e. the embedded
alternating group is not a direct sum of isomorphic copies). By~\eqref{eq:prelim-diagonal-embedding}, the canonical
embedding $\iota_{n,n+1}$ copies the same permutation to every descendant, and therefore produces \emph{synchronized}
diagonal groups.

For $m\leq n$, $v\in V_{m+1}$, and $w\in V_{n+1}$, let $\Omega_{m,n}(v,w)$ be the set of paths $\eta=e_{m+1}\cdots e_n$ such that $\mathbf{s}(e_{m+1})=v$ and $\mathbf{r}(e_n)=w$. We regard the empty path as the unique element of
$\Omega_{n,n}(v,v)$. If $Y\subseteq\mathcal{T}_{v,m}$ and $\eta\in\Omega_{m,n}(v,w)$, put $Y\eta:=\{\gamma\eta:\gamma\in Y\}$.

\begin{defn}[Address embedding]
  \label{def:address-embedding}
  Let $m\leq n$, and let $Y=Y_1\sqcup\cdots\sqcup Y_r$, where each $Y_j$ is a finite subset of a tile $\mathcal{T}_{v_j,m}$. An \emph{address embedding of $Y$ at level $n$} is a map $\phi:Y\to\Omega_n(\mathsf{B})$ for which there are a vertex $w\in V_{n+1}$ and paths $\eta_j\in\Omega_{m,n}(v_j,w)$, $j=1,\ldots,r$, such that $\phi(\gamma)=\gamma\eta_j$ for every $\gamma\in Y_j$.
\end{defn}

The map $\phi$ is injective, and its image is $\phi(Y)=\bigsqcup_{j=1}^rY_j\eta_j \subseteq\mathcal{T}_{w,n}$. Thus an address embedding preserves the level-$m$ prefix and only appends a prescribed continuation to each model piece. In particular, the identification of the model with its image is determined by the Bratteli addresses and is not an
arbitrary bijection.

\begin{defn}[Alternating datum]
  \label{def:alternating-datum}
  An \emph{alternating datum at level $n$} is a tuple $\mathcal{D}=(Y;\phi_1,\ldots,\phi_k)$, where:

  \begin{enumerate}
    \item
          $Y=Y_1\sqcup\cdots\sqcup Y_r$ is a finite model support with $|Y|\geq3$, and each $Y_j$ is contained in a level-$m$ tile for some fixed $m\leq n$;

    \item
          $\phi_1,\ldots,\phi_k$ are address embeddings of the same model support $Y$ at level $n$;

    \item
          the coordinate supports $Y^{(i)}:=\phi_i(Y)$, $i=1,\ldots,k$, are pairwise disjoint.
  \end{enumerate}

  We associate with $\mathcal{D}$ the subgroup
  \[\mathsf{A}(\mathcal{D}):=
    \left\{\prod_{i=1}^k\phi_i\sigma\phi_i^{-1}:\sigma\in\mathsf{A}(Y)\right\}\leq\mathsf{A}_n,\]
  where every permutation $\phi_i\sigma\phi_i^{-1}$ is extended identically outside $Y^{(i)}$.

  We call $\mathcal{D}$ \emph{independent} if $k=1$ and \emph{synchronized} if $k\geq2$. We define its support by
  $\operatorname{supp}(\mathcal{D}) =\bigsqcup_{i=1}^kY^{(i)}$.
\end{defn}

The same permutation $\sigma\in\mathsf{A}(Y)$ acts on every coordinate support of a synchronized datum. The
decomposition $Y=Y_1\sqcup\cdots\sqcup Y_r$ allows one coordinate support to be assembled from several predecessor tile copies, as happens in the tile inflation process.

\begin{exmp}
  \label{exmp:address-embedding-FG}
  Consider the vertex $0_1$ in the Bratteli diagram (Figure~\ref{fig:FG-modified-bratteli}) of the modified
  Fabrykowski--Gupta group. Let $e_{0_0},e_1,e_2$ be the edges from $0_1$ to $0_0,1,2$, respectively. Take
  $Y=\mathcal{T}_{0_1,n}$ and define
  \[\phi_x:Y\longrightarrow\Omega_{n+1}(\mathsf{B}),
  \qquad
    \phi_x(\gamma)=\gamma e_x,
    \qquad
    x\in\{0_0,1,2\}.
  \]
  Then $\mathcal{D} =(Y;\phi_{0_0},\phi_1,\phi_2)$ is a synchronized alternating datum at level $n+1$, and
  \[
    \begin{aligned}
      \mathsf{A}(\mathcal{D})
       & =
      \Delta\bigl(
      \mathsf{A}(\mathcal{T}_{0_1,n}e_{0_0}),
      \mathsf{A}(\mathcal{T}_{0_1,n}e_1),
      \mathsf{A}(\mathcal{T}_{0_1,n}e_2)
      \bigr) \\
       & =
      \iota_{n,n+1}
      \bigl(
      \mathsf{A}(\mathcal{T}_{0_1,n})
      \bigr).
    \end{aligned}
  \]
  Figure~\ref{fig:FG-address-datum} illustrates this situation. The upper tile carries the model support $Y=\mathcal{T}_{0_1,n}$, and the three lower tiles contain its descendant copies under the address embeddings
  $\phi_{0_0},\phi_1,\phi_2$.
\end{exmp}

\begin{figure}[ht]
  \centering
  \begin{tikzpicture}[
      x=1cm,
      y=1cm,
      vertex/.style={
          circle,
          fill=black,
          inner sep=1.9pt
        },
      tile/.style={
          draw,
          rounded corners=2pt,
          minimum width=2.75cm,
          minimum height=1.05cm,
          align=center
        },
      copy/.style={
          draw,
          rounded corners=1.5pt,
          minimum width=1.55cm,
          minimum height=0.5cm,
          align=center
        },
      bratteli/.style={
          ->,
          line width=0.75pt
        },
      address/.style={
          ->,
          dashed,
          line width=0.7pt
        }
    ]


    \node[vertex] (top) at (0,6.1) {};
    \node[above=5pt] at (top) {$0_1$};

    \node[vertex] (bottom00) at (-3.3,4.7) {};
    \node[vertex] (bottom1)  at (0,4.7) {};
    \node[vertex] (bottom2)  at (3.3,4.7) {};

    \node[below=5pt] at (bottom00) {$0_0$};
    \node[below=5pt] at (bottom1)  {$1$};
    \node[below=5pt] at (bottom2)  {$2$};

    \node[left=1.2cm] at (top) {$V_{n+1}$};
    \node[left=1.2cm] at (bottom00) {$V_{n+2}$};

    \draw[bratteli]
    (top) --
    node[pos=0.55,above left=1pt] {$e_{0_0}$}
    (bottom00);

    \draw[bratteli]
    (top) --
    node[pos=0.52,right=3pt] {$e_1$}
    (bottom1);

    \draw[bratteli]
    (top) --
    node[pos=0.55,above right=1pt] {$e_2$}
    (bottom2);


    \node[tile] (model) at (0,2.9)
    {$Y=\mathcal{T}_{0_1,n}$};

    \node[above=3pt] at (model.north)
    {model support at level $n$};


    \node[tile] (tile00) at (-3.3,0.4) {};
    \node[tile] (tile1)  at (0,0.4) {};
    \node[tile] (tile2)  at (3.3,0.4) {};

    \node[above=3pt] at (tile00.north)
    {$\mathcal{T}_{0_0,n+1}$};

    \node[above=3pt] at (tile1.north)
    {$\mathcal{T}_{1,n+1}$};

    \node[above=3pt] at (tile2.north)
    {$\mathcal{T}_{2,n+1}$};

    \node[copy] (copy00) at (-3.3,0.4)
    {$Ye_{0_0}$};

    \node[copy] (copy1) at (0,0.4)
    {$Ye_1$};

    \node[copy] (copy2) at (3.3,0.4)
    {$Ye_2$};


    \draw[address]
    (model.south west)
    to[out=-125,in=90]
    node[pos=0.1,left=4pt] {$\phi_{0_0}$}
    (copy00.north);

    \draw[address]
    (model.south)
    --
    node[pos=0.32,right=4pt] {$\phi_1$}
    (copy1.north);

    \draw[address]
    (model.south east)
    to[out=-55,in=90]
    node[pos=0.1,right=4pt] {$\phi_2$}
    (copy2.north);

  \end{tikzpicture}
  \caption{A synchronized alternating datum arising from the diagonal embedding of
    $Y=\mathcal{T}_{0_1,n}$.}
  \label{fig:FG-address-datum}
\end{figure}

This picture explains the origin of synchronized alternating data. One alternating group supported on $Y$ at level $n$ appears at level $n+1$ as the same alternating group acting diagonally on the descendant copies $Ye_{0_0}$, $Ye_1$, and $Ye_2$.

\begin{prop}[Diagonal inheritance]
  \label{prop:diagonal-inheritance}
  Let $\mathcal{D}=(Y;\phi_1,\ldots,\phi_k)$ be an alternating datum at level $n$. Suppose
  $\phi_i(Y)\subseteq\mathcal{T}_{v_i,n}$ for $i=1,\ldots,k$. For every $e\in E_{n+1}$ with $\mathbf{s}(e)=v_i$, define
  \[\phi_{i,e}:Y\longrightarrow\Omega_{n+1}(\mathsf{B}),
    \qquad
    \phi_{i,e}(\gamma)=\phi_i(\gamma)e.\]
  Then
  \[
    \iota_{n,n+1}\bigl(\mathsf{A}(\mathcal{D})\bigr)=
    \left\{\prod_{i=1}^k\prod_{\substack{e\in E_{n+1}\\\mathbf{s}(e)=v_i}}
    \phi_{i,e}\sigma\phi_{i,e}^{-1}:\sigma\in\mathsf{A}(Y)\right\}.
  \]
  In particular, $\iota_{n,n+1}(\mathsf{A}(\mathcal{D}))$ is the diagonal alternating group on all descendant copies of the coordinate supports of $\mathcal{D}$.
\end{prop}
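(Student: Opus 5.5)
The plan is to compute $\iota_{n,n+1}$ on a single generic element of $\mathsf{A}(\mathcal{D})$ and compare it with the right-hand side. Since $\iota_{n,n+1}$ is a homomorphism, the image of $\mathsf{A}(\mathcal{D})$ is exactly $\{\iota_{n,n+1}(g_\sigma):\sigma\in\mathsf{A}(Y)\}$, where $g_\sigma=\prod_{i=1}^k\phi_i\sigma\phi_i^{-1}$. It therefore suffices to prove, for each fixed $\sigma$,
\[\iota_{n,n+1}(g_\sigma)=\prod_{i=1}^k\prod_{\substack{e\in E_{n+1}\\\mathbf{s}(e)=v_i}}\phi_{i,e}\sigma\phi_{i,e}^{-1}.\]

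First, write $g_\sigma=(\sigma_v)_{v\in V_{n+1}}$ as an element of $\mathsf{S}_n$. The component $\sigma_v$ is the product of the commuting factors $\phi_i\sigma\phi_i^{-1}$ over those $i$ with $v_i=v$. These factors commute because the $Y^{(i)}$ are pairwise disjoint.

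Next, apply formula \eqref{eq:prelim-diagonal-embedding}. On $\mathcal{T}_{w,n+1}$, the image is $\prod_{\mathbf{r}(e)=w}\phi_e\sigma_{\mathbf{s}(e)}\phi_e^{-1}$. Expand each $\sigma_{\mathbf{s}(e)}$ and use that conjugation by $\phi_e$ is multiplicative. This gives factors
\[\phi_e\phi_i\sigma\phi_i^{-1}\phi_e^{-1}=\phi_{i,e}\sigma\phi_{i,e}^{-1},\]
since $\phi_{i,e}=\phi_e\circ\phi_i$ by the definition $\phi_{i,e}(\gamma)=\phi_i(\gamma)e$. Taking the product over all $w\in V_{n+2}$ amounts to reindexing the double product by pairs $(i,e)$ with $\mathbf{s}(e)=v_i$. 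Each edge $e$ has a unique range, so every such pair appears exactly once, and the displayed identity follows.

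The one point needing care is bookkeeping with supports, not algebra. I need that the sets $\phi_{i,e}(Y)=Y^{(i)}e$ are pairwise disjoint, so that all factors commute and the product is well defined independently of order. For the same $e$ and distinct $i$ this follows from disjointness of the $Y^{(i)}$, because $\phi_e$ is injective. For distinct edges the paths end differently, so the sets are disjoint. The identity-extension convention is compatible because $\phi_e$ maps $\mathcal{T}_{v,n}\setminus Y^{(i)}$ to the complement of $Y^{(i)}e$ inside $\mathcal{T}_{v,n}e$.

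For the final sentence of the statement, I would also check that each $\phi_{i,e}$ is an address embedding of $Y$ at level $n+1$, obtained by appending $e$ to the continuation paths $\eta_j$. Then $(Y;(\phi_{i,e})_{i,e})$ is an alternating datum, and its group is the displayed diagonal group.
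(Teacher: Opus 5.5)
Your proposal is correct and follows essentially the same route as the paper: apply the diagonal embedding formula to $\prod_i\phi_i\sigma\phi_i^{-1}$, identify each copy on $\phi_i(Y)e$ with $\phi_{i,e}\sigma\phi_{i,e}^{-1}$, and use pairwise disjointness of the descendant supports. You also write out the steps the paper's short proof leaves implicit: the identity $\phi_{i,e}=\phi_e\circ\phi_i$, the reindexing by pairs $(i,e)$, the disjointness check, and the verification that each $\phi_{i,e}$ is again an address embedding.
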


\begin{proof}
  Let $\sigma\in\mathsf{A}(Y)$.  The corresponding element of $\mathsf{A}(\mathcal{D})$ acts on the coordinate support $\phi_i(Y)$ as $\phi_i\sigma\phi_i^{-1}$. Under the diagonal embedding $\iota_{n,n+1}$, this permutation is copied to $\phi_i(Y)e$ for every edge $e$ with $\mathbf{s}(e)=v_i$. Under the identification $\phi_{i,e}$, each such copy is again the permutation $\sigma$. Since these descendant supports are pairwise disjoint, the displayed formula follows.
\end{proof}

Thus the Bratteli diagram determines which alternating supports remain synchronized after refinement. The task at the
next level is to glue some of these inherited supports, while keeping the synchronizations required by the target
alternating data.

\subsection{Connectors, selectors and propagation}
\label{subsec:connectors-selectors-propagation}

Let $\mathscr{D}_n$ be the collection of alternating data at level $n$ of $\mathsf{B}$. We choose a subgroup
$\mathsf{C}_n\leq\mathsf{A}_n$ that contains $\mathsf{A}(\mathcal{D})$ for every $\mathcal{D}\in\mathscr{D}_n$. The
group $\mathsf{C}_n$ also contains the finite-level alternating subgroups used to conjugate connecting points in the
passage from level $n$ to level $n+1$.

The induction step has the form
\[\text{alternating data at level $n$}
  \quad\Longrightarrow\quad
  \text{alternating data at level $n+1$}.
\]
We now describe what must be verified for every target datum $\mathcal{D}'\in\mathscr{D}_{n+1}$.

Write $\mathcal{D}'=(Y';\psi_1,\ldots,\psi_\ell)$. We first choose alternating data $\mathcal{D}_1,\ldots,\mathcal{D}_r\in\mathscr{D}_n$ whose descendant supports are used to construct $\mathcal{D}'$. By
Proposition~\ref{prop:diagonal-inheritance}, the groups
\[
  \iota_{n,n+1}
  \bigl(
  \mathsf{A}(\mathcal{D}_j)
  \bigr),
  \qquad
  j=1,\ldots,r,
\]
act diagonally on the corresponding descendant copies.

For each coordinate support $\psi_i(Y')$, we obtain finite subsets
\[
  Z_{i,1},\ldots,Z_{i,r_i}
  \subseteq
  \psi_i(Y')
\]
from these descendant copies. Their union is $\psi_i(Y')$. If $\mathcal{D}'$ is synchronized, the decompositions of the different coordinate supports correspond under the address embeddings $\psi_1,\ldots,\psi_\ell$.

The inherited alternating groups may act independently on some of the sets $Z_{i,j}$ and diagonally on others. The
connector graph $\mathsf{e}_n$ joins connecting points in these descendant supports. We use selected parts of
$\mathsf{e}_n$ to produce mixed $3$-cycles between the sets $Z_{i,j}$. Once the resulting bridge graph is connected,
the gluing lemmas in Section~\ref{sec:alternating-gluing} give $\mathsf{A}(\psi_i(Y'))$. In the synchronized case, we
perform the same operations in every coordinate and apply the diagonal gluing lemmas to obtain
$\mathsf{A}(\mathcal{D}')$.

We make the action at the connector graph precise.

\begin{defn}[Active and protected connectors]\label{def:active-protect}

  Fix a target datum
$\mathcal{D}'\in\mathscr{D}_{n+1}$ and the descendant supports used to
  construct it.  Let $\Sigma\subseteq\Omega_{n+1}(\mathsf{B})$ be the
  union of these supports, and put
  $[\Sigma]:=\bigsqcup_{\gamma\in\Sigma}[\gamma]$.

  Let $\mathsf{a}\subseteq\mathsf{e}_n$ be the union of the connectors used to glue these supports. We call the
  connectors of $\mathsf{a}$ \emph{active}. Let $\pi$ be the prescribed permutation of the connecting points
  $V(\mathsf{a})$.

  We also choose a union $\mathsf{p}$ of connectors, called the \emph{protected connectors}. These may belong to the
  current connector graph $\mathsf{e}_n$ or to an adjacent connector graph $\mathsf{e}_{n-1},\mathsf{e}_{n+1}$. They include:

  \begin{enumerate}
    \item
          connectors used to produce a different target datum at the same
          level of $\mathsf{B}$ which must remain unchanged;

    \item
          connectors from an adjacent level whose action must remain unchanged
          in the present induction step.
  \end{enumerate}

  We only include such connectors when their connector neighborhoods meet the supports involved in the present gluing
  argument. We require $U_{\mathsf{a}}\cap U_{\mathsf{p}}=\varnothing$. Connector neighborhoods disjoint from $[\Sigma]$
  impose no condition.
\end{defn}
\begin{defn}[Selector]\label{def:selector}
  A \emph{selector} for a gluing argument is an element $s\in\widehat{G}$ satisfying the following conditions.

  \begin{enumerate}[label=\textup{(E\arabic*)},ref=\textup{(E\arabic*)}]
    \item\label{cond:selector:E1}
          For every $p\in V(\mathsf{a})$ and every compatible infinite tail $w$,
          we have
          \[
            s(pw)=\pi(p)w.
          \]
          If the active connector graph consists of several synchronized copies, then $\pi$ is the same permutation in every copy
          under the address identifications.

    \item\label{cond:selector:E2}
          The element $s$ acts identically on $U_{\mathsf{p}}$.

    \item\label{cond:selector:E3}
          Apart from the prescribed action on $U_{\mathsf{a}}$, the element $s$
          is trivial on the gluing support; equivalently,
          \[
            \operatorname{supp}(s)\cap[\Sigma]
            \subseteq U_{\mathsf{a}}.
          \]
  \end{enumerate}
\end{defn}

The active connectors are the connectors whose connecting points are moved in order to create bridge cycles. The
protected connectors record the two possible interferences that must be excluded: interference with a different gluing argument at the same level, and interference with a connector used in an adjacent level.

The selector is allowed to have other components away from the supports used in the argument. These components do not
affect conjugation of elements supported in $[\Sigma]$.

The following elementary calculation is the basic use of a selector.

\begin{prop}[Bridge cycles from a selector]
  \label{prop:selector-bridge-cycles}
  Let $I$ be a finite set.  For every $i\in I$, let $U_i$ and $V_i$ be
  disjoint finite sets that are identified across the indices $i$.  Choose
  corresponding points
  $p_i,x_i,y_i\in U_i$ and $q_i\in V_i$, where
  $p_i,x_i,y_i$ are distinct.

  Suppose that a subgroup $H$ contains the synchronized groups
  \[
    \Delta_{i\in I}\mathsf{A}(U_i),
    \qquad
    \Delta_{i\in I}\mathsf{A}(V_i),
  \]
  and let $s$ be a selector satisfying $s(p_i)=q_i$, $s(x_i)=x_i$ and $s(y_i)=y_i$. Then
  \[
    \Delta_{i\in I}
    \mathsf{A}(U_i\sqcup V_i)
    \leq
    \langle H,s\rangle.
  \]
\end{prop}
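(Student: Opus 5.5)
The plan is to conjugate a synchronized $3$-cycle on the $U$-side by $s$, which moves one of its points into $V$ and so yields a synchronized bridge cycle. Diagonal one-bridge gluing (Corollary~\ref{cor:diagonal-one-bridge-gluing}) then finishes the argument.

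Concretely, let $c=(p\,x\,y)$ be the model $3$-cycle in the model copy of $U$, and write $c_i=(p_i\,x_i\,y_i)$. Since $\Delta_{i\in I}\mathsf{A}(U_i)\leq H$, the synchronized element $\Delta_I(c)=\prod_i c_i$ lies in $H$. Conjugation by $s$ gives
\[
s\,\Delta_I(c)\,s^{-1}=\prod_{i\in I}(s(p_i)\,s(x_i)\,s(y_i))=\prod_{i\in I}(q_i\,x_i\,y_i),
\]
and this lies in $\langle H,s\rangle$. This uses only that $s$ is a bijection carrying each point of the cycle to the indicated point. Because the points $q_i$ correspond under the identifications across $i$, the conjugated element is $\Delta_I(c')$ for the model $3$-cycle $c'=(q\,x\,y)$. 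The support of $c'$ meets both the model copy of $U$ (at $x,y$) and the model copy of $V$ (at $q$).

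Next, apply Corollary~\ref{cor:diagonal-one-bridge-gluing} in $\langle H,s\rangle$ with the disjoint model sets $U$ and $V$. The group contains $\Delta_I\mathsf{A}(U)$ and $\Delta_I\mathsf{A}(V)$ by hypothesis, and it contains $\Delta_I(c')$. The corollary then gives $\Delta_I\mathsf{A}(U\sqcup V)\leq\langle H,s\rangle$, as claimed.

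The main point to check is that the conjugate really is a synchronized element supported in $\bigsqcup_i(U_i\sqcup V_i)$ with the same model cycle in every coordinate. This requires the action of $s$ on $p_i,x_i,y_i$ to be the prescribed one, and not to drag points in from elsewhere. Here I read $s(p_i)=q_i$ as the selector's induced action on finite-level vertices, in the sense of \ref{cond:selector:E1}; this action is compatible with the tails, so conjugation acts on the finite-level permutation as computed above. The hypothesis that $s$ fixes $x_i,y_i$, together with \ref{cond:selector:E3}, ensures that no other points of the gluing support are disturbed. The synchronization of $\pi$ across copies, also required in \ref{cond:selector:E1}, makes the $q_i$ correspond.
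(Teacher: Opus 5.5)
Your proposal is correct and follows the paper's proof exactly. You conjugate the synchronized $3$-cycle $\prod_{i\in I}(p_i\,x_i\,y_i)$ by $s$ to obtain the synchronized bridge cycle $\prod_{i\in I}(q_i\,x_i\,y_i)$, and then apply Corollary~\ref{cor:diagonal-one-bridge-gluing}. Your extra remarks on reading $s(p_i)=q_i$ through \ref{cond:selector:E1} and the tail-compatible action on cylinders are a reasonable clarification that the paper leaves implicit.
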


\begin{proof}
  The first synchronized alternating group contains
  \[\alpha:=\prod_{i\in I}(p_i\,x_i\,y_i).\]
  Since $s(p_i)=q_i$ and $s$ fixes $x_i,y_i$ for every $i\in I$, we have
  \[s\alpha s^{-1}=\prod_{i\in I}(q_i\,x_i\,y_i).\]
  This is a synchronized mixed $3$-cycle joining $U_i$ to $V_i$ in every coordinate. The result follows from
  Corollary~\ref{cor:diagonal-one-bridge-gluing}.
\end{proof}

When $I$ has one element, this is the corresponding independent gluing argument. By repeating
Proposition~\ref{prop:selector-bridge-cycles} along a connected family of descendant supports, we obtain the
alternating group on their union. We then use connected-cover gluing or its diagonal version to obtain the required
target datum.

In the applications, three operations occur repeatedly:

\begin{enumerate}
  \item
        we conjugate a $3$-cycle by a selector to create a bridge cycle at a
        connector;

  \item
        we conjugate the bridge cycle by elements of
        $\iota_{n,n+1}(\mathsf{C}_n)$ to vary its connecting points;

  \item
        we multiply two elements with equal synchronized components in order to
        cancel these components and isolate the required alternating group.
\end{enumerate}

The gluing lemmas then complete the alternating group on the target support.

For every $\mathcal{D}'\in\mathscr{D}_{n+1}$, we specify the predecessor data, the active and protected connectors, the required selectors, and the sequence of gluing lemmas which produces $\mathsf{A}(\mathcal{D}')$. We also carry out the same verification for the additional finite-level alternating subgroups included in $\mathsf{C}_{n+1}$.

We call this level-by-level construction the \emph{propagation} of the alternating data from level $n$ to level $n+1$. Here propagation means that every datum in $\mathscr{D}_{n+1}$, together with every auxiliary finite-level alternating subgroup included in $\mathsf{C}_{n+1}$, is produced from the descendants of the level-$n$ data by the connector, selector, cancellation and gluing arguments described above. In particular, propagation from level $n$ to level $n+1$ gives a
containment
\[\mathsf{C}_{n+1}\leq\left\langle\iota_{n,n+1}(\mathsf{C}_n),s_{n,1},\ldots,s_{n,k_n}\right\rangle,\]
where $s_{n,1},\ldots,s_{n,k_n}\in\widehat{G}$ are the selectors used in the corresponding gluing arguments.

\subsection{Bounded absorption}
\label{subsec:bounded-absorption}

The propagation described in the preceding subsection does not necessarily produce the whole group $\mathsf{A}_{n+1}$. The alternating groups available at an intermediate level may still be synchronized across several isomorphic copies of tiles. We only require that these synchronized groups propagate and that every finite-level alternating group becomes contained in the available data after a uniformly bounded number of further inflations.

We now package the propagation described in the preceding subsection and the eventual recovery of the full finite-level alternating groups into one definition.

\begin{defn}[Selector-absorbing alternating system]
  \label{def:selector-absorbing-alternating-system}
  A \emph{selector-absorbing alternating system} for $\widehat{G}$ consists of an integer $n_0$, finite families of alternating data $\mathscr{D}_n$, and groups $\mathsf{C}_n\leq\mathsf{A}_n$ for
  $n\geq n_0$, satisfying the following conditions.
  
  \begin{enumerate}[label=\textup{(A\arabic*)},ref=\textup{(A\arabic*)}]
    \item\label{cond:selector-system:A1}
    \emph{Finite seed.}
          The initial group is contained in $\widehat{G}$:
          $\mathsf{C}_{n_0}\leq\widehat{G}$.

    \item\label{cond:selector-system:A2}
    \emph{Propagation.}
          For every $n\geq n_0$, each datum in
          $\mathscr{D}_{n+1}$, together with every additional alternating subgroup
          used in $\mathsf{C}_{n+1}$, is obtained from the level-$n$ data by the
          procedure described above.  In particular, there are selectors
          $s_{n,1},\ldots,s_{n,k_n}\in\widehat{G}$ such that
          \[\mathsf{C}_{n+1}\leq
            \left\langle\iota_{n,n+1}(\mathsf{C}_n),s_{n,1},\ldots,s_{n,k_n}\right\rangle.\]

    \item\label{cond:selector-system:A3}
    \emph{Bounded absorption.}
          There is $D\geq0$ such that, for every $n\geq n_0$, there is an integer $d_n$ with $0\leq d_n\leq D$ and
          \[\iota_{n,n+d_n}(\mathsf{A}_n)\leq\mathsf{C}_{n+d_n}.\]
          Here, we use the convention that $\iota_{n,n}=\Id$.
  \end{enumerate}

 The smallest possible value of $D$ in \ref{cond:selector-system:A3} is called the \emph{absorption delay}.
\end{defn}

Condition~\ref{cond:selector-system:A2} describes the propagation of the chosen alternating data from one level to the next. Condition~\ref{cond:selector-system:A3} has a different purpose: it guarantees that no finite-level alternating group remains permanently trapped in synchronized diagonal form.

\begin{exmp}
  If $\mathsf{C}_n=\mathsf{A}_n$ for every $n\geq n_0$, then the absorption delay is $0$. This is the case for the fragmented golden mean dihedral group in \cite[Section 8]{nekrash18} and the fragmented modified LMS-group in \cite[Section 5]{kua26no1}.

  In the fragmentation group of the modified Fabrykowski--Gupta group in this paper, the available data at level $n$ are generally synchronized, but we will prove that
  \[\iota_{n,n+2}(\mathsf{A}_n)\leq\mathsf{C}_{n+2}.\]
  Thus the absorption delay in that example is at most $2$, which is the same as the example of embedded Grigorchuk group described in \cite[Subsection~5.3.5]{nek22}. 
\end{exmp}

\begin{thm}[Selector bootstrap]
  \label{thm:selector-bootstrap}
  Suppose that $\widehat{G}$ admits a selector-absorbing alternating system. Then
  \[\mathsf{A}(\mathfrak{T}_{\mathsf{B}})\leq
    \widehat{G}.\]
\end{thm}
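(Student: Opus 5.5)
The plan is to show by induction on $n\geq n_0$ that $\mathsf{C}_n\leq\widehat{G}$, and then use \ref{cond:selector-system:A3} to capture every $\mathsf{A}_n$. Throughout, we identify $\mathsf{S}_n$ with its image in the homeomorphism group of $\Omega(\mathsf{B})$. The permutation $\sigma\in\mathsf{S}_n$ acts by $\gamma w\mapsto\sigma(\gamma)w$ for $\gamma\in\Omega_n(\mathsf{B})$, where $w$ is an arbitrary compatible tail. With this identification, the canonical embedding $\iota_{n,n+1}$ of \eqref{eq:prelim-diagonal-embedding} does not change the homeomorphism. Indeed, writing $w=ew'$ with $\mathbf{s}(e)=\mathbf{r}(\gamma)$, the element $\iota_{n,n+1}(\sigma)$ sends $\gamma e w'$ to $\sigma_{\mathbf{s}(e)}(\gamma)ew'$, which is exactly the action of $\sigma$. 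So $\iota_{n,m}(g)$ and $g$ are the same element of $\mathsf{S}(\mathfrak{T}_{\mathsf{B}})$, and the direct limit $\mathsf{A}(\mathfrak{T}_{\mathsf{B}})=\varinjlim\mathsf{A}_n$ is simply the increasing union $\bigcup_n\mathsf{A}_n$ inside $\operatorname{Homeo}(\Omega(\mathsf{B}))$. This is where $\widehat{G}$ also acts, so containment statements make sense.

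The induction runs as follows. The base case $\mathsf{C}_{n_0}\leq\widehat{G}$ is \ref{cond:selector-system:A1}. Assume $\mathsf{C}_n\leq\widehat{G}$. By the identification above, $\iota_{n,n+1}(\mathsf{C}_n)=\mathsf{C}_n$ as a set of homeomorphisms, so $\iota_{n,n+1}(\mathsf{C}_n)\leq\widehat{G}$. The selectors $s_{n,1},\ldots,s_{n,k_n}$ lie in $\widehat{G}$ by Definition~\ref{def:selector}. Condition \ref{cond:selector-system:A2} then gives
\[\mathsf{C}_{n+1}\leq\left\langle\iota_{n,n+1}(\mathsf{C}_n),s_{n,1},\ldots,s_{n,k_n}\right\rangle\leq\widehat{G}.\]
Hence $\mathsf{C}_n\leq\widehat{G}$ for all $n\geq n_0$. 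Only the displayed containment from \ref{cond:selector-system:A2} is used here; the gluing details behind it are already packaged into that condition.

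For absorption, fix $n\geq n_0$. By \ref{cond:selector-system:A3} there is $d_n\leq D$ with $\iota_{n,n+d_n}(\mathsf{A}_n)\leq\mathsf{C}_{n+d_n}\leq\widehat{G}$. Since $\iota_{n,n+d_n}$ is the identity on homeomorphisms, this gives $\mathsf{A}_n\leq\widehat{G}$. For $n<n_0$, the embeddings give $\mathsf{A}_n\leq\mathsf{A}_{n_0}$ inside the union, so these levels are covered as well. Taking the union over $n$ yields $\mathsf{A}(\mathfrak{T}_{\mathsf{B}})\leq\widehat{G}$. The uniform bound $D$ is not actually needed for this conclusion; the existence of some $d_n$ for each $n$ suffices.

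The only point needing care is the identification of $\iota_{n,m}$ with the identity on homeomorphisms. This is what makes $\iota_{n,n+1}(\mathsf{C}_n)$ a subgroup of $\widehat{G}$ and lets us transport absorption back to level $n$. I would verify it explicitly from \eqref{eq:prelim-diagonal-embedding} and the decomposition \eqref{eq:prelim-tile-inflation}. Beyond that, the argument is purely formal.
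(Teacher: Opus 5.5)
Your proof is correct and matches the paper's argument: you induct via \textup{(A1)} and \textup{(A2)}, using that $\iota_{n,n+1}$ does not change the underlying homeomorphism, and then use \textup{(A3)} to place each $\mathsf{A}_n$ with $n\geq n_0$ (enough, since the $\mathsf{A}_n$ increase) inside some $\mathsf{C}_{n+d_n}\leq\widehat{G}$. Your remark that the uniform bound $D$ is never used is also accurate, since the paper's proof likewise uses only the existence of each $d_n$.
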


\begin{proof}
  By \ref{cond:selector-system:A1}, we have $\mathsf{C}_{n_0}\leq\widehat{G}$. Suppose that $\mathsf{C}_n\leq\widehat{G}$ for some $n\geq n_0$. The group $\iota_{n,n+1}(\mathsf{C}_n)$ consists of the same homeomorphisms written at level $n+1$, and hence it is contained in $\widehat{G}$. Every selector appearing in \ref{cond:selector-system:A2} also belongs to $\widehat{G}$. Therefore $\mathsf{C}_{n+1}\leq\widehat{G}$. By induction, $\mathsf{C}_n\leq\widehat{G}$ for every $n\geq n_0$.

  Let $g\in\mathsf{A}(\mathfrak{T}_{\mathsf{B}})$. After passing to a deeper level if necessary, we may assume that
  $g\in\mathsf{A}_n$ for some $n\geq n_0$. By bounded absorption, there is $d_n\leq D$ such that
  \[\iota_{n,n+d_n}(g)\in\mathsf{C}_{n+d_n}
    \leq\widehat{G}.\]
  The elements $g$ and $\iota_{n,n+d_n}(g)$ define the same homeomorphism of $\Omega(\mathsf{B})$. Hence $g\in\widehat{G}$.
\end{proof}

\begin{rmk}
  To verify that an example admits a selector-absorbing alternating system, we must provide four pieces of information:

  \begin{enumerate}
    \item
          the alternating data $\mathscr{D}_n$ and the groups $\mathsf{C}_n$;

    \item
          the predecessor data used to produce every target datum at the next level;

    \item
          the active and protected connectors in the connector graph $\mathsf{e}_n$, together with selectors realizing the required connector permutations;

    \item
          a uniform bound on the number of levels required to absorb $\mathsf{A}_n$ into the groups $\mathsf{C}_{n+d}$.
  \end{enumerate}
\end{rmk}

\subsection{Full group completion}
\label{subsec:selector-full-group-completion}

The selector bootstrap theorem supplies the hypothesis on the AF core required by Theorem~\ref{thm:prelim-full-group-completion}. We therefore obtain the following consequence of Part I.

\begin{thm}[Selector systems and full-group completion]
  \label{thm:selector-full-group-completion}
  Let $\widehat{G}$ be a finitely generated group of bounded type over a simple Bratteli diagram satisfying the finite singular germ condition of Definition~\ref{def:prelim-finite-singular-germ-condition}.  Suppose that $\widehat{G}$ admits a selector-absorbing alternating system and that its singular germs satisfy the localization condition.  Then $\widehat{G}^{\mathrm{par}}=\mathsf{F}(\mathfrak{G})$.

  If the AF truncations also satisfy the parity condition, then $\mathsf{F}(\mathfrak{G})=\mathsf{S}(\mathfrak{T}_{\mathsf{B}})\widehat{G}$ and
  \[[\mathsf{F}(\mathfrak{G}):\widehat{G}]=[\mathsf{P}_{\mathsf{B}}:\mathsf{P}_{\widehat{G}}]
    <\infty.\]
  In particular, $\widehat{G}=\mathsf{F}(\mathfrak{G})$ if and only if $\mathsf{P}_{\widehat{G}}=\mathsf{P}_{\mathsf{B}}$.
\end{thm}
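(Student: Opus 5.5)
The statement should follow by chaining Theorem~\ref{thm:selector-bootstrap} with the full group completion theorem of Part~I (Theorem~\ref{thm:prelim-full-group-completion}). The only hypothesis of Theorem~\ref{thm:prelim-full-group-completion} not already assumed here is the containment $\mathsf{A}(\mathfrak{T}_{\mathsf{B}})\leq\widehat{G}$. So the first step is to apply Theorem~\ref{thm:selector-bootstrap} to the given selector-absorbing alternating system. That result only uses that $\mathsf{B}$ is simple and that the selectors and seed lie in $\widehat{G}$, both of which are available.

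Next I would check that the remaining hypotheses of Theorem~\ref{thm:prelim-full-group-completion} are literally the ones assumed:
\begin{itemize}
  \item $\widehat{G}$ is finitely generated and of bounded type over a simple Bratteli diagram;
  \item it satisfies the finite singular germ condition of Definition~\ref{def:prelim-finite-singular-germ-condition};
  \item its singular germs satisfy the localization condition.
\end{itemize}
One point needs care. The groupoid $\mathfrak{G}=\operatorname{Germ}(\widehat{G}\curvearrowright\Omega(\mathsf{B}))$ and the parity completion $\widehat{G}^{\mathrm{par}}$ of Definition~\ref{def:parity-completion} must be the same objects in both statements. This holds because both are defined from $\widehat{G}$ alone, with the same $\mathsf{P}_{\mathsf{B}}$ and $\mathsf{P}_{\widehat{G}}$. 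Theorem~\ref{thm:prelim-full-group-completion} then gives $\widehat{G}^{\mathrm{par}}=\mathsf{F}(\mathfrak{G})$.

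For the second part, I would add the parity condition on the AF truncations. The second half of Theorem~\ref{thm:prelim-full-group-completion} then yields $\mathsf{F}(\mathfrak{G})=\mathsf{S}(\mathfrak{T}_{\mathsf{B}})\widehat{G}$ and the index formula $[\mathsf{F}(\mathfrak{G}):\widehat{G}]=[\mathsf{P}_{\mathsf{B}}:\mathsf{P}_{\widehat{G}}]$. Finiteness follows because $\mathsf{P}_{\mathsf{B}}$ is finite, as stated before Definition~\ref{def:parity-completion}.

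The final equivalence is immediate from the index formula. The index equals $1$ exactly when $\mathsf{P}_{\widehat{G}}=\mathsf{P}_{\mathsf{B}}$. Equivalently, in that case $d=0$, so $\widehat{G}^{\mathrm{par}}=\widehat{G}$.

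There is no real obstacle, since the proof is a direct composition of two earlier theorems. The only step deserving attention is confirming that the selector-absorbing system needs no extra hypotheses beyond those of Theorem~\ref{thm:selector-bootstrap}. In particular, the seed level $n_0$ and the bounded absorption must cover all of $\mathsf{A}(\mathfrak{T}_{\mathsf{B}})$, which is handled inside the proof of Theorem~\ref{thm:selector-bootstrap} by passing to deeper levels.
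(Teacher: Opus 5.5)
Your proposal is correct and is essentially the paper's own proof. Theorem~\ref{thm:selector-bootstrap} supplies $\mathsf{A}(\mathfrak{T}_{\mathsf{B}})\leq\widehat{G}$, and every remaining conclusion, including the index formula and the final equivalence under the parity condition, is read off from Theorem~\ref{thm:prelim-full-group-completion}.
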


\begin{proof}
  Theorem~\ref{thm:selector-bootstrap} gives $\mathsf{A}(\mathfrak{T}_{\mathsf{B}})\leq\widehat{G}$. The rest follows from Theorem~\ref{thm:prelim-full-group-completion}.
\end{proof}

\section{Fragmentations producing selectors}
\label{sec:fragmentations-producing-selectors}

In this section, we give one sufficient condition for constructing the selectors in Definition~\ref{def:selector}. The mechanism is based on the fragmentations used in \cite{nekrash18}, together with the subdirect product description developed in \cite[Chapter~4]{JC19}.

This criterion is only sufficient. Pure non-Hausdorffness appears in our examples through an additional
identity-coordinate condition, but it is not required by the abstract definition of a selector.

\subsection{Fragmentations and finite singular germs}
\label{subsec:fragmentations-finite-singular-germs}

Let $G=\langle S\rangle$ be a group of bounded type satisfying the finite singular germ condition with boundary points $\Xi=\{\xi_1,\ldots,\xi_r\}$. We use the notations of \cite{kua26no1}. In particular, the finite group of germs at $\xi_i$ is denoted $H_i$.

For a finite group $K$ of homeomorphisms of $\Omega(\mathsf{B})$, put
\[\operatorname{Fix}(K):=\{\gamma \in\Omega(\mathsf{B}):k(\gamma)=\gamma\text{ for every }k\in K\}.
\]
Let $\mathcal{P}$ be a finite $K$-invariant partition of $\Omega(\mathsf{B})\setminus\operatorname{Fix}(K)$ into open
sets.

\begin{defn}[Fragmentation]
  \label{def:fragmentation}
  A \emph{fragmentation of $K$ with respect to $\mathcal{P}$} is a finite group $\mathcal{F}$ of homeomorphisms of $\Omega(\mathsf{B})$ satisfying the following conditions.

  \begin{enumerate}
    \item
          For every $f\in\mathcal{F}$ and every $P\in\mathcal{P}$, there is $k_{f,P}\in K$ such that $f|_P=k_{f,P}|_P$.

    \item
          Every $f\in\mathcal{F}$ acts identically on $\operatorname{Fix}(K)$.

    \item
          For every $P\in\mathcal{P}$, the restriction map $\mathcal{F}\to K|_P$ is surjective.
  \end{enumerate}
\end{defn}

The product of the restriction maps gives an injective homomorphism
\[\mathcal{F}\longrightarrow\prod_{P\in\mathcal{P}}K|_P.\]
Its image projects surjectively onto every coordinate, and thus is a \emph{subdirect product}. We write
$f=(f_P)_{P\in\mathcal{P}}$, where $f_P=f|_P$.

Conversely, a finite subdirect product of the groups $K|_P$ defines a fragmentation whenever the corresponding
piecewise transformations extend continuously to $\Omega(\mathsf{B})$. When $K=\langle h\rangle$ is cyclic, this is the usual fragmentation of the finite-order transformation $h$.

For every $i$, choose a finite group $K_i\leq G_{\xi_i}$ such that its image in $H_i$ generates $H_i$. We assume that
every element of $K_i$ has only AF germs away from $\xi_i$. Let $\mathcal{P}_i$ be a finite $K_i$-invariant partition
and let $\mathcal{F}_i$ be a fragmentation of $K_i$. We put
\[\widehat{G}:=\left\langle S_{\mathrm{fin}},\mathcal{F}_1,\ldots,\mathcal{F}_r\right\rangle,\]
and call it a \emph{fragmentation group} of $G$, where $S_{\mathrm{fin}}$ is as in definition~\ref{def:prelim-finite-singular-germ-condition}.

In terms of tile inflation processes, we see that the fragmentations only replace the labels on existing boundary edges and connecting edges. They do not create new boundary points, and at least one fragment remains active at every
boundary edge of the original inflation.

\begin{prop}
  \label{prop:fragmentation-preserves-finite-singular-condition}
  Under the assumptions above, $\widehat{G}$ is a group of bounded type satisfying the finite singular germ condition with the same boundary points $\Xi=\{\xi_1,\ldots,\xi_r\}$.
\end{prop}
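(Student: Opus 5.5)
Since $\mathcal{F}_i$ is a finite group, the plan is to check that $\widehat{G}$ is generated by a finite set of elements of bounded type. We then verify the finite singular germ condition of Definition~\ref{def:prelim-finite-singular-germ-condition} for the generating set $S_{\mathrm{fin}}\sqcup\widehat{S}_{\mathrm{sing}}$, where $\widehat{S}_{\mathrm{sing}}:=\bigcup_i(\mathcal{F}_i\setminus\{1\})$.

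\textbf{Germs of a fragment.} Fix $f\in\mathcal{F}_i$ and a point $\gamma\in\Omega(\mathsf{B})$.
\begin{itemize}
\item If $\gamma$ lies in some piece $P\in\mathcal{P}_i$, then $P$ is open, so the germ of $f$ at $\gamma$ equals the germ of $k_{f,P}\in K_i$.
\item If $\gamma$ lies in the interior of $\operatorname{Fix}(K_i)$, the germ of $f$ is trivial.
\item The only delicate points are those in $\partial\operatorname{Fix}(K_i)$.
\end{itemize}
By hypothesis, every element of $K_i$ has only AF germs away from $\xi_i$. So at points $\gamma\neq\xi_i$, each germ of $f$ is locally that of some $k\in K_i$ or the identity. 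This should be AF provided we show the germ at a boundary point of $\operatorname{Fix}(K_i)$ other than $\xi_i$ is AF.

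For that, I would argue as follows. Near such $\gamma$, each $k\in K_i$ agrees on a cylinder neighbourhood with a finitary (tail-preserving) map. A tail-preserving map fixing $\gamma$ and having the same germ class on nearby pieces can be cut into finitely many cylinder pieces, on each of which $f$ is either the identity or a given $k$. Each such piece is an AF germ. Continuity of $f$ and finiteness of $\mathcal{P}_i$ give a cylinder around $\gamma$ on which these finitely many descriptions combine into a single element of the AF groupoid.

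\textbf{The point $\xi_i$ and bounded type.} The germ of $f$ at $\xi_i$ is the non-AF germ. Since $K_i\leq G_{\xi_i}$, every $k_{f,P}$ fixes $\xi_i$, and $f$ fixes $\xi_i$ by continuity. Hence each $f\in\widehat{S}_{\mathrm{sing}}$ fixes $\xi_i$ and has its unique non-AF germ there. Bounded type then follows from this germ description: each generator has finitely many non-AF germs, and its action is described by finitely many cylinder pieces. Equivalently, in the language of tile inflations, the fragmentation only relabels existing boundary and connector edges, so the same tiles and the same boundary points $\Xi$ appear. Each $\xi_j$ remains germ-defining because at least one fragment stays active at every original boundary edge.

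\textbf{Finiteness of the groups of germs.} Let $\widehat{H}_i=\widehat{G}_{\xi_i}/\widehat{G}_{(\xi_i)}$. An element of $\widehat{G}$ fixing $\xi_i$ is a product of generators. Via pathwise sections, its germ at $\xi_i$ is a product of germs of elements of $K_i\cup S_{\mathrm{fin}}$ near $\xi_i$, together with the shifted copies coming from the restrictions $f_P$.

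I expect this to be the main obstacle. The group of germs at $\xi_i$ may differ from $H_i$, since it could be a subdirect-type product as in the example, where it is $(\mathbb{Z}/3\mathbb{Z})^2$. The plan is to bound it by the image of $\prod_{P}K_i|_P$ restricted to the finitely many pieces whose closures contain $\xi_i$. This is a finite group, since there are finitely many pieces $P$ with $\xi_i\in\overline{P}$ and each $K_i$ is finite. Any element of $\widehat{G}_{\xi_i}$ acts near $\xi_i$, on each such piece, by an element of $K_i$ composed with AF germs. The residual finitary part must be shown to have trivial germ at $\xi_i$ after using that $\xi_i$ is germ-defining. This gives an injection of $\widehat{H}_i$ into a finite group and completes the finite singular germ condition.
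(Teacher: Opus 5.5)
Your outline (bounded type via relabeling, germs of a fragment, germ-definingness, finiteness of the germ groups) matches the paper's, but two of these steps are not actually proved.

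\textbf{Germ-definingness.} The sentence ``each $\xi_j$ remains germ-defining because at least one fragment stays active at every original boundary edge'' gives no reason. That fact concerns the persistence of boundary points, not the triviality of germs of cycles. The paper argues directly. In a nonloop cycle based at a point of the $\widehat{G}$-orbit of $\xi_i$, a non-AF factor germ can only occur at $\xi_i$. The corresponding factor fixes $\xi_i$, so two consecutive vertices coincide, which a nonloop cycle forbids. Hence every factor germ is AF, and the germ of the cycle is an isotropy element of the principal groupoid $\mathfrak{T}_{\mathsf{B}}$, so it is trivial.

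\textbf{Finiteness of the germ groups.} For $\widehat{G}_{\xi_i}/\widehat{G}_{(\xi_i)}$ you stop exactly at the decisive step (``the residual finitary part must be shown to have trivial germ''). You also propose to get it from germ-definingness, which you have not established. The missing idea is again principality. Write $g\in\widehat{G}_{\xi_i}$ as a word and follow its orbit path.
\begin{itemize}
\item Each excursion away from $\xi_i$ consists of AF germs and returns to $\xi_i$. Its composite is therefore an AF germ fixing $\xi_i$, hence trivial.
\item A generator that fixes $\xi_i$ and is not in $\mathcal{F}_i$ has an AF, hence trivial, germ there.
\end{itemize}
So every germ at $\xi_i$ is represented by an element of $\mathcal{F}_i$; this is the normal form the paper quotes from Part~I. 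The germ group is then a quotient of the finite group $\mathcal{F}_i$. Your bound by $\prod_{P:\,\xi_i\in\overline{P}}K_i|_P$ follows from this but is not needed.

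\textbf{Two smaller points.} First, your choice $\widehat{S}_{\mathrm{sing}}=\bigcup_i(\mathcal{F}_i\setminus\{1\})$ does not satisfy the definition in general. You assert without justification that a fragment's germ at $\xi_i$ is non-AF. But a nontrivial fragment may have trivial germ at $\xi_i$, for instance if it is nontrivial only on pieces not accumulating at $\xi_i$; it then has no non-AF germ at all. The paper separates the cases:
\begin{itemize}
\item if $(f,\xi_i)$ is trivial, all germs of $f$ are AF, and by compactness $f\in\mathsf{S}_m$ for some $m$, so $f$ is finitary;
\item otherwise the germ at the fixed point $\xi_i$ is nontrivial, hence non-AF by principality.
\end{itemize}

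Second, your treatment of $\partial\operatorname{Fix}(K_i)$ is more involved than necessary and not quite precise, since the pieces of $\mathcal{P}_i$ are open sets, not cylinders. If $\gamma\in\operatorname{Fix}(K_i)\setminus\{\xi_i\}$, every $k\in K_i$ fixes $\gamma$ with an AF germ, hence is the identity near $\gamma$. So $\gamma$ is an interior point of $\operatorname{Fix}(K_i)$ and $f$ is trivial near $\gamma$. The only delicate point is $\xi_i$ itself.
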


\begin{proof}
  The fragmentations introduce only finitely many labels and do not change the finite tiles or their boundary vertices. Hence the resulting tile inflation is still of bounded type and has the same boundary points.

  Let $f\in\mathcal{F}_i$. On every piece of $\mathcal{P}_i$, the transformation $f$ agrees with an element of $K_i$.
  Since every element of $K_i$ fixes $\xi_i$ and has only AF germs away from $\xi_i$, we have $f(\xi_i)=\xi_i$ and
  $(f,\zeta)\in\mathfrak{T}_{\mathsf{B}}$ for every $\zeta\neq\xi_i$. If $(f,\xi_i)$ is trivial, then every germ of $f$ belongs to $\mathfrak{T}_{\mathsf{B}}$. By compactness, there is a level $m$ such that $f\in\mathsf{S}_m$, and hence $f$ is finitary. Otherwise, $f$ is a singular generator assigned to $\xi_i$.

  We verify that $\xi_i$ remains germ-defining. Consider a nonloop cycle based at a point in the $\widehat{G}$-orbit of $\xi_i$. If one of its factor germs is non-AF, then the corresponding intermediate point is $\xi_i$. The factor fixes $\xi_i$, so two consecutive vertices of the cycle coincide. This contradicts the definition of a nonloop cycle. Thus every factor germ is AF. The germ of the cycle is then isotropy in the principal groupoid $\mathfrak{T}_{\mathsf{B}}$, and hence it is trivial.

  Finally, by the singular germ normal form of Part~I \cite{kua26no1}, every nontrivial germ at $\xi_i$ has a
  representative in $\mathcal{F}_i$. Hence the group of germs at $\xi_i$ is a quotient of the finite group
  $\mathcal{F}_i$, and is therefore finite.
\end{proof}

In some examples, the same singular transformation is active at several families of connectors that must be controlled independently. We then split it as a product of commuting finite-order transformations, each carrying one family of connectors, and fragment these transformations separately. This is the reason for the decomposition $L=L'L''$ in the modified LMS-group in \cite[Section 5]{kua26no1}.

\subsection{Selector-producing fragmentations}
\label{subsec:selector-producing-fragmentations}

Fix a selector-absorbing alternating system $\{(\mathscr{D}_n,\mathsf{C}_n)\}_{n\geq n_0}$. Consider one of the gluing arguments used to produce a target datum $\mathcal{D}'\in\mathscr{D}_{n+1}$. Let $\Sigma$ be the union of the
descendant supports involved in the argument, let $\mathsf{a}\subseteq\mathsf{e}_n$ be the union of its active
connectors, and let $\mathsf{p}$ be the union of its protected connectors.

Suppose that the connectors in $\mathsf{a}$ are carried by $K_i$. For every connector $c\subseteq\mathsf{a}$, the
required connector permutation is induced on $U_c$ by an element of $K_i$. We choose the partition $\mathcal{P}_i$ so
that every connector neighborhood used by the alternating system is contained in one piece of $\mathcal{P}_i$.

The active and protected connectors determine a finite coordinate prescription on $\mathcal{P}_i$. If $P\in\mathcal{P}_i$ contains an active connector, we prescribe an element $a_P\in K_i|_P$ inducing the required
permutation of its connecting points. If $P$ contains a protected connector, or if $P$ meets $[\Sigma]$ but contains no active connector, we prescribe $a_P=\Id|_P$.

If one piece contains several active connectors, their prescribed actions must agree on that piece. We also require
that the chosen active coordinate has no additional support in $[\Sigma]\setminus U_{\mathsf{a}}$.

\begin{defn}[Selector-producing fragmentation]
  \label{def:selector-producing-fragmentation}
  We say that the family $\{\mathcal{F}_1,\ldots,\mathcal{F}_r\}$ is
  \emph{selector-producing} for
  $\{(\mathscr{D}_n,\mathsf{C}_n)\}_{n\geq n_0}$ if the following
  conditions hold.

  \begin{enumerate}[label=\textup{(F\arabic*)},ref=\textup{(F\arabic*)}]
    \item\label{cond:selector-fragmentation:F1}
          Every connector neighborhood used by the alternating system is contained in one fragmentation piece. No piece contains incompatible active and protected prescriptions.

    \item\label{cond:selector-fragmentation:F2}
          For every gluing argument and every $i$, the prescribed coordinates extend to an element $f=(f_P)_{P\in\mathcal{P}_i}\in\mathcal{F}_i$; that is, $f_P=a_P$ on every piece on which a coordinate has been prescribed.

    \item\label{cond:selector-fragmentation:F3}
          If one gluing argument uses connector families carried by several groups $K_i$, then the corresponding connector neighborhoods are pairwise disjoint, and the product of the selected fragments realizes the combined active and protected prescriptions.
  \end{enumerate}
\end{defn}

Condition~\ref{cond:selector-fragmentation:F2} is a finite membership problem in the subdirect product
$\mathcal{F}_i\leq\prod_{P\in\mathcal{P}_i}K_i|_P$. When the connector graphs and the fragmentations have finitely many phases, only finitely many coordinate prescriptions occur.

\begin{prop}[Existence of selectors]
  \label{prop:fragmentations-produce-selectors}
  Suppose that the fragmentations are selector-producing. Then every selector required by the alternating system is realized by an element of
  $\widehat{G}$.
\end{prop}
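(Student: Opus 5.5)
Fix one gluing argument used to produce a target datum $\mathcal{D}'\in\mathscr{D}_{n+1}$, with gluing support $\Sigma$, active connectors $\mathsf{a}$, prescribed permutation $\pi$ of $V(\mathsf{a})$, and protected connectors $\mathsf{p}$. The plan is to build the selector $s$ directly from the fragmentations and then check \ref{cond:selector:E1}--\ref{cond:selector:E3} one at a time. Each coordinate prescription is local to a single piece of $\mathcal{P}_i$, so the verification reduces to a piece-by-piece comparison.

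\emph{Construction.} Let $i_1,\ldots,i_t$ be the indices such that $\mathsf{a}$ contains connectors carried by $K_{i}$. For each such $i$, form the coordinate prescription $(a_P)$ on $\mathcal{P}_i$ described before Definition~\ref{def:selector-producing-fragmentation}. By \ref{cond:selector-fragmentation:F1}, every connector neighborhood lies in a single piece, and no piece receives both an active and an incompatible protected (identity) prescription, so the prescription is well defined. By \ref{cond:selector-fragmentation:F2}, there is $f_i\in\mathcal{F}_i$ with $(f_i)_P=a_P$ on every prescribed piece. Put $s:=f_{i_1}\cdots f_{i_t}$. Then $s\in\widehat{G}$, since each $\mathcal{F}_i\subseteq\widehat{G}$ by the definition of the fragmentation group.

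\emph{Verification for one $f_i$.} Take $p\in V(\mathsf{a})$ whose connector $c$ is carried by $K_i$. The neighborhood $U_c$ lies in a piece $P$ with $f_i|_P=a_P$, and $a_P$ induces the required permutation of the connecting points of $c$, so $f_i(pw)=\pi(p)w$ for compatible tails. If $c$ is synchronized with other copies, each copy gets the same prescription under the address identification, which gives the uniformity demanded in \ref{cond:selector:E1}. Next take a point of $U_{\mathsf{p}}$. Either it lies in a prescribed identity piece, or it lies in $\operatorname{Fix}(K_i)$, on which $f_i$ is trivial by the definition of a fragmentation; this gives \ref{cond:selector:E2}. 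For \ref{cond:selector:E3}, take a point of $[\Sigma]\setminus U_{\mathsf{a}}$. It lies either in $\operatorname{Fix}(K_i)$, or in a piece meeting $[\Sigma]$ with identity prescription, or in an active piece. In the last case we use the standing requirement that the active coordinate has no support in $[\Sigma]\setminus U_{\mathsf{a}}$.

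\emph{Combining the factors.} By \ref{cond:selector-fragmentation:F3}, the neighborhoods handled by different $f_i$ are pairwise disjoint, and the product realizes the combined prescriptions. So on $U_c$ for a connector carried by $K_i$, the other factors act identically, and $s$ restricts to $f_i$ there. Composing the per-factor statements then yields \ref{cond:selector:E1}--\ref{cond:selector:E3} for $s$.

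This last step is the main obstacle. One must make sure that a factor $f_j$ does not move points of $U_c$ or of $[\Sigma]$ into the region where another factor acts nontrivially. The plan is to show that each $f_j$ preserves $[\Sigma]\cup U_{\mathsf{a}}\cup U_{\mathsf{p}}$ setwise, acting either as the identity or as the prescribed permutation of connecting points. If that holds, the supports inside $[\Sigma]$ compose without interaction, and \ref{cond:selector-fragmentation:F3} supplies exactly the needed disjointness.

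Finally, there are only finitely many gluing arguments at each level (finitely many phases), and the construction applies to each of them. Hence every selector $s_{n,j}$ appearing in \ref{cond:selector-system:A2} is realized in $\widehat{G}$.
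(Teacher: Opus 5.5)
Your proposal is correct and follows the paper's proof: obtain one fragment per singular point from \ref{cond:selector-fragmentation:F2}, take their product, and use \ref{cond:selector-fragmentation:F3} to conclude \ref{cond:selector:E1}--\ref{cond:selector:E3}; your piece-by-piece check spells out what the paper states in one line. The ``main obstacle'' you flag needs no separate setwise-invariance argument, because the second clause of \ref{cond:selector-fragmentation:F3} (the product of the selected fragments realizes the combined active and protected prescriptions) already is that statement, taken as a hypothesis exactly as the paper does.
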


\begin{proof}
  Fix a gluing argument.  For every singular point used in it, Condition~\ref{cond:selector-fragmentation:F2} gives an element of the corresponding fragmentation group with the prescribed action on the active pieces and
  trivial action on all protected pieces and all other pieces meeting $[\Sigma]$.

  If the argument uses one singular point, we take this fragment as the selector. If it uses several roles, we take their product. Condition~\ref{cond:selector-fragmentation:F3} shows that the product has the prescribed action on $U_{\mathsf{a}}$, acts identically on $U_{\mathsf{p}}$, and has no other component meeting $[\Sigma]$. It is therefore a selector in the sense of Definition~\ref{def:selector}.
\end{proof}

The preceding proposition does not use any non-Hausdorff assumption. We now record an additional condition satisfied by the fragmentations in the main examples.

We recall that a germ-defining singular point $\xi$ is \emph{purely non-Hausdorff} (\cite[Definition~2.2]{nekrash18}) if $\operatorname{Int}(\operatorname{Fix}(g))$ accumulates on $\xi$ for every $g\in G_\xi$.

Assume, in addition, that for every $i$:

\begin{enumerate}[label=\textup{(N\arabic*)},ref=\textup{(N\arabic*)}]
  \item\label{cond:pure-non-Hausdorff:N1}
        every piece $P\in\mathcal{P}_i$ accumulates on $\xi_i$;

  \item\label{cond:pure-non-Hausdorff:N2}
        every $f\in\mathcal{F}_i$ has an identity coordinate, that is, there is
        $P\in\mathcal{P}_i$ such that $f|_P=\Id|_P$.
\end{enumerate}

\begin{prop}
  \label{prop:identity-coordinate-pure-non-Hausdorff}
  Under Conditions~\ref{cond:pure-non-Hausdorff:N1} and~\ref{cond:pure-non-Hausdorff:N2}, every
  $\xi_i\in\Xi$ is a purely non-Hausdorff singularity of $\widehat{G}$.
\end{prop}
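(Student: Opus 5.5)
We need to show that for every $g\in\widehat{G}_{\xi_i}$ (the stabilizer of $\xi_i$ in $\widehat{G}$), $\operatorname{Int}(\operatorname{Fix}(g))$ accumulates on $\xi_i$. The plan is to reduce an arbitrary $g$ to a fragment by passing to germs. By Proposition~\ref{prop:fragmentation-preserves-finite-singular-condition} and the singular germ normal form of Part~I invoked in its proof, every germ of $\widehat{G}$ at $\xi_i$ has a representative in $\mathcal{F}_i$. So for $g\in\widehat{G}_{\xi_i}$ we choose $f\in\mathcal{F}_i$ with the same germ at $\xi_i$. Then there is an open neighborhood $W$ of $\xi_i$ on which $g|_W=f|_W$. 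Since $\operatorname{Int}(\operatorname{Fix}(g))\cap W=\operatorname{Int}(\operatorname{Fix}(f))\cap W$, and accumulation at $\xi_i$ is a local property, it suffices to prove the claim for $f\in\mathcal{F}_i$.

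For $f\in\mathcal{F}_i$, Condition~\ref{cond:pure-non-Hausdorff:N2} gives a piece $P\in\mathcal{P}_i$ with $f|_P=\Id|_P$. The pieces of $\mathcal{P}_i$ are open, so $P\subseteq\operatorname{Int}(\operatorname{Fix}(f))$. By Condition~\ref{cond:pure-non-Hausdorff:N1}, $P$ accumulates on $\xi_i$. Hence $\operatorname{Int}(\operatorname{Fix}(f))$ accumulates on $\xi_i$, and by the reduction so does $\operatorname{Int}(\operatorname{Fix}(g))$.

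It remains to check that $\xi_i$ is germ-defining, since pure non-Hausdorffness is defined for germ-defining singular points. This is exactly part of Proposition~\ref{prop:fragmentation-preserves-finite-singular-condition}. We also need $\xi_i$ to be a genuine singularity of $\widehat{G}$, meaning a non-Hausdorff point. For this we use that $K_i$ maps onto $H_i$: since $H_i$ is nontrivial for a boundary point, $K_i$ contains elements with nontrivial germ at $\xi_i$. The surjectivity condition in Definition~\ref{def:fragmentation} gives fragments restricting to such elements on some pieces. Any $f$ with nontrivial germ at $\xi_i$ still has an identity piece accumulating at $\xi_i$, so it is not trivial near $\xi_i$ yet agrees with the identity on open sets approaching $\xi_i$. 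This is the non-Hausdorff phenomenon.

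The main obstacle I expect is the germ-representative step. If the normal form from Part~I is not directly available in the form ``every germ at $\xi_i$ of an element of $\widehat{G}$ is the germ of some $f\in\mathcal{F}_i$'', I would argue directly. First write $g$ as a word in $S_{\mathrm{fin}}\cup\bigcup_j\mathcal{F}_j$. The germs of finitary letters and of fragments from other $\mathcal{F}_j$ along the orbit path are AF. Then use germ-definingness (the nonloop cycle argument) to collapse the word's germ at $\xi_i$ to a product of germs of elements of $\mathcal{F}_i$, which is again a germ of $\mathcal{F}_i$ because $\mathcal{F}_i$ is a group. AF factors are absorbed by composing with an element of some $\mathsf{S}_m$ fixing a neighborhood of $\xi_i$ after shrinking.
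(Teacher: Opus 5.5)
Your proof is correct and is essentially the paper's proof. You pick $f\in\mathcal{F}_i$ with $(f,\xi_i)=(g,\xi_i)$ via the singular germ representatives of Part~I, take the open identity piece $P$ given by (N2), which accumulates on $\xi_i$ by (N1), and intersect it with the neighborhood on which $f=g$.

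The differences are cosmetic. The paper handles the trivial-germ case separately rather than representing it by $\Id\in\mathcal{F}_i$. It also takes the germ-defining singular status of $\xi_i$ as given, so your extra paragraph on the nontriviality of $H_i$ is not needed. In that paragraph, the claim that a fragment agreeing with a nontrivial $k\in K_i$ on one piece has nontrivial germ is not justified as written, but nothing in your proof depends on it.
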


\begin{proof}
  Let $g\in\widehat{G}_{\xi_i}$. If $(g,\xi_i)$ is trivial, then $g$ fixes a neighborhood of $\xi_i$, and the conclusion follows.

  Suppose that $(g,\xi_i)$ is nontrivial. By the representatives of singular germs in \cite[Proposition~3.7]{kua26no1}, we may choose $f\in\mathcal{F}_i$ with $(f,\xi_i)=(g,\xi_i)$. Thus $g$ and $f$ agree on a neighborhood $V$ of $\xi_i$.

  By \ref{cond:pure-non-Hausdorff:N2}, there is $P\in\mathcal{P}_i$ such that $f|_P=\Id|_P$. By \ref{cond:pure-non-Hausdorff:N1}, the set $P$ accumulates on $\xi_i$. Hence $P\cap V$ is an open subset of $\operatorname{Fix}(g)$ which accumulates on $\xi_i$. Therefore $\operatorname{Int}(\operatorname{Fix}(g))$ accumulates on $\xi_i$.
\end{proof}

\begin{cor}[Fragmentation criterion for alternating containment]
  \label{cor:fragmentation-alternating-containment}
  Suppose that $\{(\mathscr{D}_n,\mathsf{C}_n)\}_{n\geq n_0}$ has a finite seed and satisfies bounded absorption.  If all its required selectors are supplied by a selector-producing fragmentation, then $\mathsf{A}(\mathfrak{T}_{\mathsf{B}})\leq\widehat{G}$.
\end{cor}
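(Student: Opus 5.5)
The plan is to reduce the corollary to the Selector bootstrap, Theorem~\ref{thm:selector-bootstrap}. That means checking that the family $\{(\mathscr{D}_n,\mathsf{C}_n)\}_{n\geq n_0}$ satisfies \ref{cond:selector-system:A1}, \ref{cond:selector-system:A2} and \ref{cond:selector-system:A3} of Definition~\ref{def:selector-absorbing-alternating-system} with all selectors inside $\widehat{G}$.

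Two of the three conditions are hypotheses. The finite seed is \ref{cond:selector-system:A1}, and bounded absorption is \ref{cond:selector-system:A3}. What remains is propagation.

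For \ref{cond:selector-system:A2}, the system is assumed to be produced by the level-by-level procedure of Subsection~\ref{subsec:connectors-selectors-propagation}. For every $n\geq n_0$ and every target datum in $\mathscr{D}_{n+1}$, together with every auxiliary subgroup of $\mathsf{C}_{n+1}$, the procedure specifies:
\begin{enumerate}
\item predecessor data;
\item active and protected connectors $\mathsf{a},\mathsf{p}$ with a prescribed permutation $\pi$;
\item a gluing sequence: Proposition~\ref{prop:selector-bridge-cycles}, then conjugation by $\iota_{n,n+1}(\mathsf{C}_n)$, cancellation, and the gluing lemmas of Section~\ref{sec:alternating-gluing}.
\end{enumerate}
The only ingredient not yet known to lie in $\widehat{G}$ is the selector $s_{n,j}$ for each gluing argument, which must satisfy \ref{cond:selector:E1}--\ref{cond:selector:E3}. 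These are supplied by Proposition~\ref{prop:fragmentations-produce-selectors}, which applies because the fragmentations are selector-producing. It realizes each required selector as an element of $\langle \mathcal{F}_1,\ldots,\mathcal{F}_r\rangle\leq\widehat{G}$. Hence the containment $\mathsf{C}_{n+1}\leq\langle\iota_{n,n+1}(\mathsf{C}_n),s_{n,1},\ldots,s_{n,k_n}\rangle$ holds with all $s_{n,j}\in\widehat{G}$.

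Only finitely many selectors are needed at each level, since $\mathscr{D}_{n+1}$ is finite and each gluing argument uses finitely many. So $k_n<\infty$, as \ref{cond:selector-system:A2} requires.

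The system is therefore a selector-absorbing alternating system for $\widehat{G}$, and Theorem~\ref{thm:selector-bootstrap} gives $\mathsf{A}(\mathfrak{T}_{\mathsf{B}})\leq\widehat{G}$.

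The one point needing care is interpretive. The hypothesis ``all its required selectors are supplied'' must be read as: the gluing procedure of \ref{cond:selector-system:A2} is specified, and only the membership of the selectors in $\widehat{G}$ is outsourced to the fragmentation. I expect no genuine obstacle beyond making this reading explicit. Conditions \ref{cond:pure-non-Hausdorff:N1} and \ref{cond:pure-non-Hausdorff:N2} are not used.
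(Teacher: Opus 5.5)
Your proposal is correct and follows the paper's proof. The paper likewise treats \ref{cond:selector-system:A1} and \ref{cond:selector-system:A3} as hypotheses, obtains the selectors needed for \ref{cond:selector-system:A2} from Proposition~\ref{prop:fragmentations-produce-selectors}, and concludes by Theorem~\ref{thm:selector-bootstrap}; your reading that the gluing procedure is given and only the membership of the selectors in $\widehat{G}$ comes from the fragmentation is the one the paper uses implicitly.
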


\begin{proof}
  Proposition~\ref{prop:fragmentations-produce-selectors} supplies the selectors required in the propagation of the alternating data. The result follows from Theorem~\ref{thm:selector-bootstrap}.
\end{proof}

\begin{exmp}
  \label{exmp:three-piece-selector-fragmentation}
  Let $h$ be an involution fixing a germ-defining singular point $\xi$, and let $\mathcal{P}=\{P_0,P_1,P_2\}$, where every $P_j$ is $h$-invariant and accumulates on $\xi$.  Define $h_j$ to act identically on $P_j$ and
  as $h$ on the other two pieces. Then
  \[\mathcal{F}=
    \langle h_0,h_1,h_2\rangle
    \cong
    (\mathbb{Z}/2\mathbb{Z})^2,
    \qquad
    h_0h_1h_2=\Id.\]
  The coordinate vectors are
  \[
    (0,0,0),\qquad
    (0,1,1),\qquad
    (1,0,1),\qquad
    (1,1,0).
  \]

  Every element has an identity coordinate, so $\xi$ is purely non-Hausdorff. Moreover, $h_j$ gives a selector whenever the active connectors lie in the two pieces different from $P_j$ and the protected connectors lie in $P_j$.

  This fragmentation does not realize every possible active--protected pattern. For example, it does not contain an
  element which is active on exactly one piece and trivial on the other two. Thus the identity-coordinate condition, and thus pure non-Hausdorffness, does not by itself imply the existence of all required selectors. We must still verify Condition~\ref{cond:selector-fragmentation:F2} for the coordinate prescriptions occurring in the alternating system.
\end{exmp}

\begin{rmk}
  The criterion in this section is only a particular case. It extracts from the fragmentation method of \cite{nekrash18} a finite-coordinate condition which is sufficient for constructing selectors. We do not know
  necessary conditions for the existence of selectors.

  Pure non-Hausdorffness is not part of Definition~\ref{def:selector} and is not used in Theorem~\ref{thm:selector-bootstrap}. It is a consequence of the additional identity-coordinate condition in Proposition~\ref{prop:identity-coordinate-pure-non-Hausdorff}. A selector may instead be realized by another group element, by a different fragmentation, or by a mechanism not involving a purely non-Hausdorff singularity. Finding other sufficient conditions and determining whether any useful necessary condition exists are left open.
\end{rmk}
\section[A fragmentation group of the modified Fabrykowski--Gupta group]
 {A fragmentation group of the modified\\ Fabrykowski--Gupta group}
\label{sec:fragment-FG-modified}
We now study an example that explains synchronized alternating data and bounded absorption. The construction is based on V.~Nekrashevych's embedding of the Grigorchuk group into a simple torsion group of intermediate growth described in \cite{nek25}; see also \cite[Subsection~5.3.5]{nek22}. We apply this construction to the fragmented Fabrykowski--Gupta group. The fragmentation process was described in \cite[Subsection~5.4]{kua26}.

Let $\mathsf{X}=\{0,1,2\}$, and let $G_0$ be the Fabrykowski--Gupta group constructed by J.~Fabrykowski and N.~Gupta in \cite{FG85}. It is generated by the wreath recursions (\cite[Definition~1.5.4]{nekrash05})
$a=(0\,1\,2)$ and $b=(a,\Id,b)$.

We fragment the directed generator $b$. For $i=1,2,3,4$, put
$\mathcal{F}^{(i)}=\langle b_1^{(i)},b_2^{(i)}\rangle$, where
\begin{align}\label{gen1}
  \begin{cases}
    b_1^{(1)}=(\Id,\Id,b_1^{(2)}), \\
    b_1^{(2)}=(a,\Id,b_1^{(3)}),   \\
    b_1^{(3)}=(a^2,\Id,b_1^{(4)}), \\
    b_1^{(4)}=(a,\Id,b_1^{(1)}),
  \end{cases}
\end{align}
and
\begin{align}\label{gen2}
  \begin{cases}
    b_2^{(1)}=(a,\Id,b_2^{(2)}), \\
    b_2^{(2)}=(a,\Id,b_2^{(3)}), \\
    b_2^{(3)}=(a,\Id,b_2^{(4)}), \\
    b_2^{(4)}=(\Id,\Id,b_2^{(1)}).
  \end{cases}
\end{align}
We write $b_1=b_1^{(1)}$ and $b_2=b_2^{(1)}$.

Consider the one-sided Markov shift $\mathcal{M}$ over $\widehat{\mathsf{X}}=\{0_0,0_1,1,2\}$ with allowed transitions
\[
  \begin{gathered}
    0_00_1,\quad
    0_10_0,\quad 0_11,\quad 0_12,\\
    10_0,\quad 10_1,\quad 11,\quad 12,\\
    20_0,\quad 20_1,\quad 21,\quad 22.
  \end{gathered}
\]
Its stationary Bratteli diagram $\mathsf{B}$, with $\mathcal{M}=\Omega(\mathsf{B})$, is shown in
Figure~\ref{fig:FG-modified-bratteli}.
\begin{figure}[ht]
  \centering
  \begin{tikzpicture}[
      x=2.4cm,
      y=2.2cm,
      vertex/.style={circle,fill=black,inner sep=1.8pt},
      lab/.style={fill=none,draw=none},
      edge/.style={line width=0.55pt},
      heavyedge/.style={line width=0.95pt}
    ]

    \node[vertex] (t00) at (0,1) {};
    \node[vertex] (t01) at (1,1) {};
    \node[vertex] (t1)  at (2,1) {};
    \node[vertex] (t2)  at (3,1) {};

    \node[vertex] (b00) at (0,0) {};
    \node[vertex] (b01) at (1,0) {};
    \node[vertex] (b1)  at (2,0) {};
    \node[vertex] (b2)  at (3,0) {};

    \node[lab,above=6pt] at (t00) {$0_0$};
    \node[lab,above=6pt] at (t01) {$0_1$};
    \node[lab,above=6pt] at (t1)  {$1$};
    \node[lab,above=6pt] at (t2)  {$2$};

    \node[lab,below=6pt] at (b00) {$0_0$};
    \node[lab,below=6pt] at (b01) {$0_1$};
    \node[lab,below=6pt] at (b1)  {$1$};
    \node[lab,below=6pt] at (b2)  {$2$};

    \draw[heavyedge] (t00) .. controls +(0.35,-0.35) and +(-0.35,0.35) .. (b01);

    \draw[edge] (t01) -- (b00);
    \draw[edge] (t01) -- (b1);
    \draw[heavyedge] (t01) -- (b2);

    \draw[edge] (t1) -- (b00);
    \draw[edge] (t1) -- (b01);
    \draw[heavyedge] (t1) -- (b1);
    \draw[edge] (t1) -- (b2);

    \draw[edge] (t2) -- (b00);
    \draw[edge] (t2) -- (b01);
    \draw[edge] (t2) -- (b1);
    \draw[heavyedge] (t2) -- (b2);

  \end{tikzpicture}
  \caption{One level of the Bratteli diagram $\mathsf{B}$ modeling $\mathcal{M}$.}
  \label{fig:FG-modified-bratteli}
\end{figure}
Let $\lambda:\mathcal{M}\to\mathsf{X}^{\omega}$ erase the lower indices.  As in \cite[Subsection~5.3.5]{nek22}, the fiber $\lambda^{-1}(w)$ is a singleton unless $w$ is eventually $0$, in which case it has two points.  For example,
$\lambda^{-1}(0^{\omega})=\{(0_00_1)^{\omega},(0_10_0)^{\omega}\}$.

The action of $G_0$ lifts through $\lambda$. We split the lifted action of $a$ into four disjoint $3$-cycles according
to the second symbol, as shown in Figure~\ref{fig:four-triangular-connectors}. Together with the lifted directed
generator $b$, these transformations generate the \emph{modified Fabrykowski--Gupta group} $G_{\mathrm{FG}}:=\langle
  a_{0_0},a_{0_1},a_1,a_2,b\rangle$. Replacing $b$ by its fragments $b_1,b_2$, we obtain the main object
\[
  \widehat{G}_{\mathrm{FG}}
  :=
  \langle a_{0_0},a_{0_1},a_1,a_2,b_1,b_2\rangle,
\]
which we call a fragmentation group of the modified Fabrykowski--Gupta group. We put
$\mathfrak{G}_{\mathrm{FG}}:=\operatorname{Germ}(\widehat{G}_{\mathrm{FG}}\curvearrowright\Omega(\mathsf{B}))$.

\begin{figure}[ht]
  \centering
  \begin{tikzpicture}[
      >=Stealth,
      vertex/.style={circle,fill=black,inner sep=1.6pt},
      vlabel/.style={font=\small},
      elabel/.style={font=\small, fill=white, inner sep=1pt},
      tcaption/.style={font=\small}
    ]

    \newcommand{\drawconnector}[7]{%
      \begin{scope}[shift={(#1,#2)}]
        \coordinate (L) at (0,0);
        \coordinate (R) at (3.2,0);
        \coordinate (T) at (1.6,2.25);

        \node[vertex] at (L) {};
        \node[vertex] at (R) {};
        \node[vertex] at (T) {};

        \node[vlabel,below left=3pt]  at (L) {$#3$};
        \node[vlabel,below right=3pt] at (R) {$#4$};
        \node[vlabel,above=4pt]       at (T) {$#5$};

        \draw[->,line width=0.8pt] (T) -- node[elabel,left=2pt]  {$#6$} (L);
        \draw[->,line width=0.8pt] (L) -- node[elabel,below=2pt] {$#6$} (R);
        \draw[->,line width=0.8pt] (R) -- node[elabel,right=2pt] {$#6$} (T);

        \node[tcaption] at (1.6,-0.75) {$#7$};
      \end{scope}
    }

    \drawconnector{0}{0}
    {0_10_0w}
    {10_0w}
    {20_0w}
    {a_{0_0}}
    {\mathcal{T}_{0_0,2}}

    \drawconnector{6.4}{0}
    {0_00_1w}
    {10_1w}
    {20_1w}
    {a_{0_1}}
    {\mathcal{T}_{0_1,2}}

    \drawconnector{0}{-4.8}
    {0_11w}
    {11w}
    {21w}
    {a_{1}}
    {\mathcal{T}_{1,2}}

    \drawconnector{6.4}{-4.8}
    {0_12w}
    {12w}
    {22w}
    {a_{2}}
    {\mathcal{T}_{2,2}}

  \end{tikzpicture}
  \caption{Splitting $a$ and tiles at level $2$.}
  \label{fig:four-triangular-connectors}
\end{figure}

The transformations $a_{0_0},a_{0_1},a_1,a_2$ are extended identically to the rest of $\mathcal{M}$. There are four
tiles on every level, denoted $\mathcal{T}_{0_0,n}$, $\mathcal{T}_{0_1,n}$, $\mathcal{T}_{1,n}$ and $\mathcal{T}_{2,n}$. The level-$2$ tiles are shown in Figure~\ref{fig:four-triangular-connectors} after omitting the
suffix $w$. For $n\geq2$, the inflation rule is shown in Figure~\ref{fig:FG-inflation-rule}.

\begin{figure}[ht]
  \centering
  \begin{tikzpicture}[
      vertex/.style={circle,fill=black,inner sep=1.6pt},
      vlabel/.style={font=\small},
      tcaption/.style={font=\small},
      innerlab/.style={font=\small}
    ]

    \newcommand{\drawinflation}[6]{%
      \begin{scope}[shift={(#1,#2)}]
        \coordinate (L) at (0,0);
        \coordinate (R) at (3.2,0);
        \coordinate (T) at (1.6,2.2);

        \draw[line width=0.8pt] (L) -- (R) -- (T) -- cycle;

        \node[vertex] at (L) {};
        \node[vertex] at (R) {};
        \node[vertex] at (T) {};

        \node[vlabel,below left=3pt]  at (L) {$#3$};
        \node[vlabel,below right=3pt] at (R) {$#4$};
        \node[vlabel,above=4pt]       at (T) {$#5$};

        \node[innerlab] at (1.6,0.95) {$e_n$};

        \node[tcaption] at (1.6,-0.95) {$#6$};
      \end{scope}
    }

    \drawinflation{0}{0}
    {\mathcal{T}_{1,n}\,0_0}
    {\mathcal{T}_{2,n}\,0_0}
    {\mathcal{T}_{0_1,n}\,0_0}
    {\mathcal{T}_{0_0,n+1}}

    \drawinflation{7.0}{0}
    {\mathcal{T}_{1,n}\,0_1}
    {\mathcal{T}_{2,n}\,0_1}
    {\mathcal{T}_{0_0,n}\,0_1}
    {\mathcal{T}_{0_1,n+1}}

    \drawinflation{0}{-5.2}
    {\mathcal{T}_{1,n}\,1}
    {\mathcal{T}_{2,n}\,1}
    {\mathcal{T}_{0_1,n}\,1}
    {\mathcal{T}_{1,n+1}}

    \drawinflation{7.0}{-5.2}
    {\mathcal{T}_{1,n}\,2}
    {\mathcal{T}_{2,n}\,2}
    {\mathcal{T}_{0_1,n}\,2}
    {\mathcal{T}_{2,n+1}}

  \end{tikzpicture}
  \caption{Inflation rule for the modified Fabrykowski--Gupta tiles. The edges are left unoriented; their orientations will be specified once the connector $\mathsf{e}_n$ is fixed.}
  \label{fig:FG-inflation-rule}
\end{figure}

The connector graphs $\mathsf{e}_n$, in the sense of Definition~\ref{def:prelim-connector}, are shown in
Figure~\ref{fig:FG-periodic-connectors}.

\begin{rmk}[Notation]
  Since each edge $e$ on $\mathsf{B}$ in Figure~\ref{fig:FG-modified-bratteli} is uniquely determined by $\mathbf{s}(e)$ and $\mathbf{r}(e)$, each path $\gamma$ is represented by a sequence of vertices in $\widehat{\mathsf{X}}$. Whenever we continue a finite path $\gamma\in\Omega_n({\mathsf{B}})$, instead of writing $\gamma e_x$, we will write $\gamma x$ for $x\in\widehat{\mathsf{X}}$.
\end{rmk}

\subsection[Containment of the AF alternating group]
{Containment of $\mathsf{A}(\mathfrak{T}_{\mathsf{B}})$}
\label{FGcontain}

\begin{lemma}[Selectors for the four connector phases]\label{selectorexist}
  For every $n\geq2$, there is a selector $s_n\in\langle b_1,b_2\rangle$ which is active at $\mathsf{e}_n$ and trivial at $\mathsf{e}_{n+1}$.  We may choose
  \[
    \begin{array}{c|c|c}
      n\pmod4 & \text{connector type} & s_n      \\ \hline
      2       & \textup{(a)}          & b_1^2b_2 \\
      3       & \textup{(b)}          & b_1b_2   \\
      0       & \textup{(c)}          & b_2      \\
      1       & \textup{(d)}          & b_1.
    \end{array}
  \]
\end{lemma}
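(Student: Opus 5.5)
The plan is to compute, for every element of $\mathcal{F}=\langle b_1,b_2\rangle$, its \emph{activity vector}: the sequence of sections at the vertices $2^{k}0$, $k\geq 0$. We then read off at which connector graph $\mathsf{e}_n$ that element acts as a power of $a$ and at which it acts trivially.

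\textbf{Step 1: write the generators as activity vectors.} By the recursions \eqref{gen1} and \eqref{gen2}, both $b_1$ and $b_2$ act only along the ray $2^\omega$. At depth $k$ they act on the subtree below $2^k0$ by a power of $a$. The exponents are periodic of period $4$ in $k$:
\[
b_1:\ (0,1,2,1),\qquad b_2:\ (1,1,1,0)
\]
in $(\mathbb{Z}/3\mathbb{Z})^4$, indexed by $k\bmod 4$. Since the $b_j^{(i)}$ commute coordinatewise, $\mathcal{F}$ is abelian and isomorphic to $(\mathbb{Z}/3\mathbb{Z})^2$. The element $b_1^{x}b_2^{y}$ has activity vector $x(0,1,2,1)+y(1,1,1,0)$.

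\textbf{Step 2: compute the candidates.} We evaluate the four listed elements.
\[
b_1^2b_2:\ (1,0,2,2),\qquad b_1b_2:\ (1,2,0,1),\qquad b_2:\ (1,1,1,0),\qquad b_1:\ (0,1,2,1).
\]
In each case there is a cyclically consecutive pair of coordinates of the form (nonzero, zero). The nonzero entry is at position $j$ and the zero entry at position $j+1 \bmod 4$, with $j=3,1,2,0$ respectively.

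\textbf{Step 3: match positions to connector phases.} We identify depth $k$ of the ray with the connector graph $\mathsf{e}_n$. The section at $2^k0$ acts by $a$ on the next letter. In the lifted Markov setting, this permutes the three predecessor copies $\mathcal{T}_{1,\cdot}x$, $\mathcal{T}_{2,\cdot}x$ and $\mathcal{T}_{0_1,\cdot}x$ at the triangular connector of Figure~\ref{fig:FG-inflation-rule}, at the level corresponding to $k$. The exact shift between $k$ and $n$ comes from the Markov lift of $\lambda$ and the level-$2$ tiles of Figure~\ref{fig:four-triangular-connectors}. Once that shift is fixed, the phase table should follow.

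\textbf{Step 4: verify the selector conditions.} For the claimed element we check the following.
\begin{itemize}
\item[\ref{cond:selector:E1}] A nonzero exponent at $\mathsf{e}_n$ induces a nontrivial $3$-cycle on the connecting points, with the same permutation in every synchronized copy. This holds because the section is the same power of $a$ at every point of the ray neighborhood.
\item[\ref{cond:selector:E2}] A zero exponent at $\mathsf{e}_{n+1}$ means the element is trivial on $U_{\mathsf{e}_{n+1}}$.
\item[\ref{cond:selector:E3}] The other nonzero coordinates act at connectors whose neighborhoods lie at different depths along $2^\omega$. Their support is therefore disjoint from the gluing support $[\Sigma]$, since $\Sigma$ consists of paths ending at the level-$(n+1)$ descendant copies. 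This disjointness is the point that needs the most care.
\end{itemize}

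The main obstacle is the bookkeeping in Step 3. We must align the index $k$ of the wreath recursion with the Bratteli level $n$ and the connector type (a)--(d). We must also ensure that coordinates active at other levels do not meet $[\Sigma]$. I would settle this by direct inspection at $n=2$, using the level-$2$ tiles, and then propagate the result using the period-$4$ structure.
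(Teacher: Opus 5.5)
Your route is the paper's: compute the activity vectors $(0,1,2,1)$ and $(1,1,1,0)$, evaluate the four words, and read off the current and next coordinates. However, the read-off in Step~2 is wrong, and the error breaks Step~3.

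\textbf{The position read-off and the missing alignment.} For $b_1^2b_2=(1,0,2,2)$, the unique (nonzero, zero) pair occurs at $j=0$, not $j=3$. For $b_1=(0,1,2,1)$, it occurs at $j=3$, not $j=0$. The correct positions for the rows $n\equiv 2,3,0,1$ are therefore $j=0,1,2,3$. With your values $3,1,2,0$, the difference $n-j$ is not constant modulo $4$. So no shift between $k$ and $n$ could reproduce the table, contrary to what Step~3 expects.

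Step~3 is also the actual content of the lemma, and you leave it open. The missing observation is short. Since $b=(a,\Id,b)$, the section at $2^k0$ rewrites the letter in position $k+2$. The connector $\mathsf{e}_n$ joins the three predecessor copies of a level-$(n+1)$ tile, and these copies differ exactly in position $n$. (The level-$2$ tiles are the $a$-orbits in position $1$, so $\mathsf{e}_2$ is $b$ at depth $0$.) Hence $k\equiv n-2\pmod 4$, and your $k$-indexed vectors are exactly the paper's vectors indexed by the types (a)--(d).

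The paper uses this indexing directly, and it agrees with Figure~\ref{fig:FG-periodic-connectors}:
\begin{itemize}
\item at type (a), $b_1$ gives loops and $b_2$ the triangle;
\item at type (b), $b_1$ and $b_2$ agree;
\item at type (c), they have opposite orientations;
\item at type (d), $b_1$ gives the triangle and $b_2$ loops.
\end{itemize}
With $j\equiv n-2$, each $s_n$ is nonzero at $\mathsf{e}_n$ and zero at $\mathsf{e}_{n+1}$.

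\textbf{The argument for \ref{cond:selector:E3}.} Your justification is false as stated. For $n\geq 3$, every $s_n$ in the table has a nonzero coordinate at $\mathsf{e}_{n-1}$; the entries at position $j-1$ are $2,1,1,2$, respectively. There $s_n$ rewrites position $n-1$ of the words $2^{n-3}0\cdots$. So it permutes vertices of the level-$(n+1)$ tiles that are not connecting points of $\mathsf{e}_n$.

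By periodicity, $s_n$ also acts at $\mathsf{e}_{n+4}$ inside $[2^{n+1}]$. This cylinder lies in $[\Sigma]$ whenever $\mathcal{T}_{2,n+1}$ is among the supports, as it is for $\mathsf{D}_{n+1}$. Thus the inclusion $\operatorname{supp}(s_n)\cap[\Sigma]\subseteq U_{\mathsf{a}}$ fails, and ``different depths along $2^\omega$'' does not give disjointness.

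The paper's proof argues only the weaker property that the propagation actually uses. Namely, the action of $s_n$ away from the active connector neighborhoods misses the finite point sets that are actually conjugated. This can be arranged by choosing the auxiliary points suitably. For instance, every element of $\langle b_1,b_2\rangle$ fixes the cylinder $[1]$ pointwise.
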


\begin{proof}
  The fragmentation subgroup $\mathcal{F}^{(1)}=\langle b_1,b_2\rangle$ is isomorphic to $(\mathbb{Z}/3\mathbb{Z})^2$.  With the four coordinates indexed by the connector types \textup{(a)}, \textup{(b)}, \textup{(c)}, \textup{(d)}, the activity vectors of $b_1,b_2$ are
  \[
    b_1\longmapsto(0,1,2,1),
    \qquad
    b_2\longmapsto(1,1,1,0).
  \]
  The four elements in the table have vectors
  \[
    (1,0,2,2),\qquad
    (1,2,0,1),\qquad
    (1,1,1,0),\qquad
    (0,1,2,1),
  \]
  respectively. Thus each $s_n$ is nontrivial on the current connector type and trivial on the next type. It induces the cyclic permutation of the three connecting points on every active copy of $\mathsf{e}_n$. The action of $s_n$ outside these connector neighborhoods is disjoint from the finite supports used in the corresponding gluing argument, so $s_n$ is a selector in the sense of Definition~\ref{def:selector}.
\end{proof}

\begin{prop}
  \label{prop:FG-selector-producing-fragmentation}
  The fragmentation of $\langle b\rangle$ shown in \eqref{gen1} and \eqref{gen2} is selector-producing for the gluing arguments used below.
\end{prop}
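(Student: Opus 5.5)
The plan is to verify Conditions~\ref{cond:selector-fragmentation:F1}--\ref{cond:selector-fragmentation:F3} of Definition~\ref{def:selector-producing-fragmentation} directly for the single fragmentation group $\mathcal{F}^{(1)}=\langle b_1,b_2\rangle$. Here $K=\langle b\rangle\cong\mathbb{Z}/3\mathbb{Z}$ fixes the boundary point. The partition $\mathcal{P}$ is the one read off from the four-periodic recursions \eqref{gen1} and \eqref{gen2}. The four pieces are the cylinders $2^{k}0w$ (together with their lifts through $\lambda$), grouped by $k\pmod 4$, and these are exactly where the section $a$ of $b$ acts. On the piece indexed by phase $j$, each fragment acts as $a^{\epsilon}$, where $\epsilon$ is the exponent in the $j$-th line of \eqref{gen1} or \eqref{gen2}. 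From this one reads off the activity vectors $b_1\mapsto(0,1,2,1)$ and $b_2\mapsto(1,1,1,0)$ used in Lemma~\ref{selectorexist}.

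\textbf{Step 1: F1.} I would check that every connector neighborhood of $\mathsf{e}_n$ is a union of cylinders $[p]$ whose common prefix $2^{n-1}0$ (up to the $0_0/0_1$ lift) lies inside the single piece of phase $n\pmod 4$. For $n\geq2$ the three connecting points of a triangular component of $\mathsf{e}_n$ differ only in one coordinate, and $a$ permutes exactly that coordinate, so they sit in one piece. Active connectors in a gluing step all lie in $\mathsf{e}_n$, hence in the phase-$n$ piece. The protected connectors lie in $\mathsf{e}_{n\pm1}$, hence in different pieces, so incompatible prescriptions never share a piece.

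\textbf{Step 2: F2.} Every gluing argument at level $n$ prescribes the following coordinates: a nontrivial element (some power of $a$ inducing the cyclic permutation) on the phase-$n$ coordinate, and the identity on the phase-$(n+1)$ coordinate, plus possibly on the phase-$(n-1)$ coordinate when an adjacent lower-level connector meets $[\Sigma]$. The table in Lemma~\ref{selectorexist} already gives elements that are nonzero in coordinate $n$ and zero in coordinate $n+1$. Since $\mathcal{F}^{(1)}\cong(\mathbb{Z}/3\mathbb{Z})^2$ is the span of $(0,1,2,1)$ and $(1,1,1,0)$, I would also check whether the stronger pattern (nonzero at $n$, zero at $n\pm1$) is needed. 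If it is, I would solve the two linear equations over $\mathbb{F}_3$. For instance, in phase (a) the vector must be nonzero at (a) and zero at (b) and (d). An element $xb_1+yb_2$ gives $(y,\,x+y,\,2x+y,\,x)$, so we need $x=0$ and $y=0$, which fails. Hence the proof must show that the phase-$(n-1)$ connectors do not meet $[\Sigma]$. This follows because the descendant supports at level $n+1$ contain the level-$(n-1)$ connecting points only inside $\iota_{n,n+1}(\mathsf{C}_n)$-supports, where they are already glued and $s_n$ acts on them as an element permuting within one synchronized datum. Alternatively, condition \ref{cond:selector:E3} permits support only in $U_{\mathsf{a}}$, so this must be argued by the support disjointness stated in Lemma~\ref{selectorexist}.

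\textbf{Step 3: F3 and conclusion.} Only one singular point, and so only one group $K$, is involved. Condition~\ref{cond:selector-fragmentation:F3} is therefore vacuous. Proposition~\ref{prop:fragmentations-produce-selectors} then yields the selectors.

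I expect the main obstacle to be Step 2, namely pinning down exactly which protected coordinates occur. A rank-two subgroup of $(\mathbb{Z}/3)^4$ cannot meet three prescriptions simultaneously. The argument must therefore show that only the next phase $\mathsf{e}_{n+1}$ needs protection, while the action on the $\mathsf{e}_{n-1}$ piece lies outside $[\Sigma]$ or is harmless there.
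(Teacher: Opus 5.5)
Your outline is the paper's: the four phase pieces give \ref{cond:selector-fragmentation:F1}, the table of Lemma~\ref{selectorexist} gives \ref{cond:selector-fragmentation:F2} (current phase active, next phase protected), and \ref{cond:selector-fragmentation:F3} is vacuous. Your computation over $\mathbb{Z}/3\mathbb{Z}$ is also correct, and it says more than you use. Every nonzero vector $(d,c+d,2c+d,c)$ has exactly one zero coordinate, so any element that is active at phase $n$ and trivial at phase $n+1$ is necessarily active at phases $n+2$ and $n+3$.

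The gap is in how Step~2 disposes of these extra active coordinates. Suppose $\Sigma$ is the union of the descendant copies. Then your claim that the $\mathsf{e}_{n-1}$ connectors do not meet $[\Sigma]$ is false. Their connecting points are vertices of every level-$n$ tile, hence of every copy $\mathcal{T}_{v,n}x$, and your own justification says exactly this. The same holds for every $\mathsf{e}_m$ with $m<n$ and $m\not\equiv n+1\pmod 4$, including $\mathsf{e}_{n-2}$, which you do not consider. So with whole descendant tiles as $\Sigma$, \ref{cond:selector:E3} fails for every admissible element of $\mathcal{F}^{(1)}$, and neither branch of your dichotomy can be carried out. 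The fallback that $s_n$ ``permutes within one synchronized datum'' is asserted, not proved. It is also not what the bridge computation uses: the identity $s_n\pi_0s_n^{-1}=\epsilon'\prod_{x\in I}(p_{1,x}\,\alpha_x\,\beta_x)$ requires specific points to be fixed.

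The missing idea is the one behind the paper's sentence ``No other connector neighborhood meets these supports.'' There $[\Sigma]$ is the union of cylinders over the finitely many points in the cycles actually conjugated by $s_n$, and those points are chosen away from the other active phases. Note that the connecting points of $\mathsf{e}_n$ have prefix $2^{n-2}0$, not $2^{n-1}0$. In the $\mathsf{D}_{n+1}$ step the points are:
\begin{itemize}
\item $p_{0,x}=2^{n-2}0_00_1x$, which is active;
\item $q_10_1x=2^{n-2}10_1x$, which is fixed because every fragment has trivial section after $2^k1$;
\item $q_20_1x=2^{n-1}0_1x$, which lies in the protected phase-$(n+1)$ piece;
\item $\alpha_j0_1x$ and $\beta_j0_1x$.
\end{itemize}
For $n\geq4$ take $\alpha_0,\beta_0$ beginning with the letter $1$, so that all fragments fix them; the same choice works in the $\mathsf{M}_{n+1}$ step. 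Then only $\mathsf{e}_n$ and $\mathsf{e}_{n+1}$ meet the relevant support, and the table supplies exactly the prescribed coordinates.

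Your concern about $\mathsf{e}_{n-1}$ does bite at $n=3$. There $\mathcal{T}_{0_0,2}\setminus\{q_0\}=\{0_10_0,10_0\}$, and $s_3$ is active on $\mathsf{e}_2$, whose neighborhood contains $0_10_00_1x$. That case needs one of two fixes:
\begin{itemize}
\item An explicit version of your harmlessness idea. Take $\beta_0=10_0$. Then $s_3$ fixes $\beta_x$ and keeps the other support points inside $\mathcal{T}_{0_1,3}x$, which $\iota_{3,4}(\mathsf{D}_3)$ does not move. The extra factor therefore cancels in $\varphi_{u'}^{-1}\varphi_u$, which is still $\prod_{x\in I}(ux\,u'x\,\beta_x)$.
\item A choice of $\epsilon$ in $\mathsf{M}_3=\mathsf{A}(\mathcal{T}_{0_1,3})$ whose auxiliary points are fixed by $s_3$.
\end{itemize}
Either way, the verification is a concrete computation with chosen points, and your proposal does not supply it.
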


\begin{proof}
  Let $P_0,P_1,P_2,P_3$ be the four phase pieces, indexed by the connector types \textup{(a)}, \textup{(b)}, \textup{(c)}, \textup{(d)}.  Every connector neighborhood of one phase is contained in the corresponding piece, so \ref{cond:selector-fragmentation:F1} holds.  In the gluing arguments below, all connectors of the current phase which meet the relevant supports are active and canonically synchronized, while the next phase is protected.  No other connector neighborhood meets these supports.  The table in Lemma~\ref{selectorexist} gives an element with precisely these prescribed coordinates, so \ref{cond:selector-fragmentation:F2} holds.

  Only one singular point is used in these gluing arguments, so \ref{cond:selector-fragmentation:F3} is automatic. Every $P_j$ accumulates on $\xi=2^\omega$, which verifies \ref{cond:pure-non-Hausdorff:N1}. Moreover, a vector in the span of the two activity vectors has the form $(d,c+d,2c+d,c)$. If $c=0$ or $d=0$, it has a zero coordinate. If $c,d\neq0$, then either $d=c$, in which case $2c+d=0$, or $d=2c$, in which case $c+d=0$. Thus every element has an identity coordinate, which verifies \ref{cond:pure-non-Hausdorff:N2}.
\end{proof}

\begin{figure}[ht]
  \centering

  \begin{tikzpicture}[
      >=Stealth,
      vtx/.style={circle,fill=black,inner sep=1.9pt},
      elab/.style={font=\small, fill=white, inner sep=1pt},
      line/.style={->,line width=0.8pt}
    ]

    \newcommand{\connI}[2]{%
      \begin{scope}[shift={(#1,#2)}]
        \coordinate (L) at (0,0);
        \coordinate (R) at (3.6,0);
        \coordinate (T) at (1.8,2.55);

        \node[vtx] (Lv) at (L) {};
        \node[vtx] (Rv) at (R) {};
        \node[vtx] (Tv) at (T) {};

        \draw[line] (T) -- node[elab,left=3pt] {$b_2$} (L);
        \draw[line] (L) -- node[elab,below=2pt] {$b_2$} (R);
        \draw[line] (R) -- node[elab,right=3pt] {$b_2$} (T);

        \draw[line]
        (Tv) to[out=135,in=45,looseness=14]
        node[elab,above=7pt] {$b_1$} (Tv);

        \draw[line]
        (Lv) to[out=210,in=120,looseness=14]
        node[elab,left=8pt] {$b_1$} (Lv);

        \draw[line]
        (Rv) to[out=-30,in=60,looseness=14]
        node[elab,right=8pt] {$b_1$} (Rv);
      \end{scope}
    }

    \newcommand{\connII}[2]{%
      \begin{scope}[shift={(#1,#2)}]
        \coordinate (L) at (0,0);
        \coordinate (R) at (3.8,0);
        \coordinate (T) at (1.9,2.45);

        \node[vtx] (Lv) at (L) {};
        \node[vtx] (Rv) at (R) {};
        \node[vtx] (Tv) at (T) {};

        \draw[line]
        (T) to[bend right=18]
        node[elab,left=5pt,pos=.42] {$b_1$} (L);
        \draw[line]
        (T) to[bend left=18]
        node[elab,right=5pt,pos=.60] {$b_2$} (L);

        \draw[line]
        (L) to[bend left=6]
        node[elab,above=3pt,pos=.48] {$b_1$} (R);
        \draw[line]
        (L) to[bend right=15]
        node[elab,below=5pt,pos=.52] {$b_2$} (R);

        \draw[line]
        (R) to[bend right=18]
        node[elab,right=5pt,pos=.42] {$b_1$} (T);
        \draw[line]
        (R) to[bend left=18]
        node[elab,left=5pt,pos=.60] {$b_2$} (T);
      \end{scope}
    }

    \newcommand{\connIII}[2]{%
      \begin{scope}[shift={(#1,#2)}]
        \coordinate (L) at (0,0);
        \coordinate (R) at (3.8,0);
        \coordinate (T) at (1.9,2.45);

        \node[vtx] (Lv) at (L) {};
        \node[vtx] (Rv) at (R) {};
        \node[vtx] (Tv) at (T) {};

        \draw[line]
        (L) to[bend left=18]
        node[elab,left=5pt,pos=.42] {$b_1$} (T);
        \draw[line]
        (T) to[bend left=18]
        node[elab,right=5pt,pos=.60] {$b_2$} (L);

        \draw[line]
        (T) to[bend right=18]
        node[elab,left=3pt,pos=.42] {$b_1$} (R);
        \draw[line]
        (R) to[bend right=18]
        node[elab,right=4pt,pos=.60] {$b_2$} (T);

        \draw[line]
        (R) to[bend right=8]
        node[elab,above=3pt,pos=.48] {$b_1$} (L);
        \draw[line]
        (L) to[bend right=15]
        node[elab,below=3pt,pos=.52] {$b_2$} (R);
      \end{scope}
    }

    \newcommand{\connIV}[2]{%
      \begin{scope}[shift={(#1,#2)}]
        \coordinate (L) at (0,0);
        \coordinate (R) at (3.6,0);
        \coordinate (T) at (1.8,2.55);

        \node[vtx] (Lv) at (L) {};
        \node[vtx] (Rv) at (R) {};
        \node[vtx] (Tv) at (T) {};

        \draw[line] (T) -- node[elab,left=3pt] {$b_1$} (L);
        \draw[line] (L) -- node[elab,below=2pt] {$b_1$} (R);
        \draw[line] (R) -- node[elab,right=3pt] {$b_1$} (T);

        \draw[line]
        (Tv) to[out=135,in=45,looseness=14]
        node[elab,above=7pt] {$b_2$} (Tv);

        \draw[line]
        (Lv) to[out=210,in=120,looseness=14]
        node[elab,left=8pt] {$b_2$} (Lv);

        \draw[line]
        (Rv) to[out=-30,in=60,looseness=14]
        node[elab,right=8pt] {$b_2$} (Rv);
      \end{scope}
    }

    \begin{scope}[shift={(0,0)}]
      \connI{0}{0}
      \node at (1.8,-1.15)
      {\small (a) Connector $\mathsf{e}_n$, $n\equiv 2 \pmod{4}$};
    \end{scope}

    \begin{scope}[shift={(8.0,0)}]
      \connII{0}{0}
      \node at (1.9,-1.15)
      {\small (b) Connector $\mathsf{e}_n$, $n\equiv 3 \pmod{4}$};
    \end{scope}

    \begin{scope}[shift={(0,-6.2)}]
      \connIII{0}{0}
      \node at (1.9,-1.15)
      {\small (c) Connector $\mathsf{e}_n$, $n\equiv 0 \pmod{4}$};
    \end{scope}

    \begin{scope}[shift={(8.0,-6.2)}]
      \connIV{0}{0}
      \node at (1.8,-1.15)
      {\small (d) Connector $\mathsf{e}_n$, $n\equiv 1 \pmod{4}$};
    \end{scope}

  \end{tikzpicture}

  \caption{The periodic connector types $\mathsf{e}_n$ for the tile inflation process of the modified Fabrykowski--Gupta group. The pattern starts at $\mathsf{e}_2$ and repeats with period $4$.}
  \label{fig:FG-periodic-connectors}
\end{figure}

The following proposition is similar to \cite[Lemma~5.3.17]{nek22}.
\begin{prop}[Selector-absorbing system]
  \label{prop:FG-selector-system}
  The fragmentation group $\widehat{G}_{\mathrm{FG}}$ admits a selector-absorbing alternating system with seed level $3$ and absorption delay at most $2$. Consequently, $\mathsf{A}(\mathfrak{T}_{\mathsf{B}})\leq\widehat{G}_{\mathrm{FG}}$.
\end{prop}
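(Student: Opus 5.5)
We verify the three conditions of Definition~\ref{def:selector-absorbing-alternating-system} and then invoke Theorem~\ref{thm:selector-bootstrap}. The data will be chosen so that the inflation rule of Figure~\ref{fig:FG-inflation-rule} drives them. Every tile $\mathcal{T}_{x,n+1}$ is the union of three predecessor copies. Two of these are always $\mathcal{T}_{1,n}x$ and $\mathcal{T}_{2,n}x$, and the third is $\mathcal{T}_{0_1,n}x$, except for $x=0_1$, where it is $\mathcal{T}_{0_0,n}0_1$. The copies are joined by one triangular component of $\mathsf{e}_n$.

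At level $n$ we take $\mathscr{D}_n$ to consist of three data.
\begin{enumerate}
\item The synchronized datum with model $\mathcal{T}_{0_1,n}$, embedded as $\mathcal{T}_{0_1,n}$ itself. Where the Bratteli structure forces it, it is instead the diagonal datum of Example~\ref{exmp:address-embedding-FG} on $\mathcal{T}_{0_1,n-1}x$, $x\in\{0_0,1,2\}$.
\item The synchronized datum with model $\mathcal{T}_{1,n-1}\sqcup\mathcal{T}_{2,n-1}\sqcup\mathcal{T}_{0_1,n-1}$, embedded diagonally in $\mathcal{T}_{1,n},\mathcal{T}_{2,n},\mathcal{T}_{0_0,n}$ by appending $1,2,0_0$. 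This describes the three tiles that share a common predecessor shape.
\item The independent datum on $\mathcal{T}_{0_1,n}$.
\end{enumerate}
We put $\mathsf{C}_n=\langle\mathsf{A}(\mathcal{D}):\mathcal{D}\in\mathscr{D}_n\rangle$.

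The seed \ref{cond:selector-system:A1} at level $3$ is a finite computation. The level-$3$ tiles have $9$ or $12$ vertices. Using the generators $a_x$, which act as $3$-cycles on the level-$2$ triangles, and the selector $s_2=b_1^2b_2$ from Lemma~\ref{selectorexist}, we check directly that the required alternating groups lie in $\widehat{G}_{\mathrm{FG}}$. This is done with Proposition~\ref{prop:selector-bridge-cycles} and Lemma~\ref{lem:overlap-gluing}.

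For propagation \ref{cond:selector-system:A2}, fix $n\ge3$ and let $s_n$ be the selector of Lemma~\ref{selectorexist}. By Proposition~\ref{prop:FG-selector-producing-fragmentation}, $s_n$ cyclically permutes the three connecting points of every copy of the current-phase triangle and is trivial on the next phase $\mathsf{e}_{n+1}$, which is the protected part. By Proposition~\ref{prop:diagonal-inheritance}, the group $\iota_{n,n+1}(\mathsf{C}_n)$ contains the diagonal groups $\Delta\mathsf{A}(\mathcal{T}_{y,n}x)$, synchronized over the suffixes $x$ permitted by $\mathsf{B}$. In each target tile the three predecessor copies contain corners of the connector triangle. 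Choose $p_x$ at the corner of one copy and $x_x,y_x$ in the same copy away from the connector neighborhood. Conjugating $\prod_x(p_x\,x_x\,y_x)$ by $s_n$ gives a synchronized bridge $3$-cycle into the next copy. Proposition~\ref{prop:selector-bridge-cycles} then glues two copies, and a second application, or Corollary~\ref{cor:diagonal-connected-cover-gluing}, glues the third. This yields the level-$(n+1)$ data, still synchronized across the suffixes $x$ that share a predecessor shape.

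The data are desynchronized by the cancellation step. The tile $\mathcal{T}_{0_1,n+1}$ uses $\mathcal{T}_{0_0,n}$, whereas the other tiles use $\mathcal{T}_{0_1,n}$. The two groups $\iota(\mathsf{A}(\mathcal{T}_{0_1,n}))$ and $\iota(\mathsf{A}(\mathcal{T}_{0_0,n}))$ have different synchronization patterns. Commutators of an element from each pattern that agree on the common coordinates isolate the alternating group on single coordinates. Since $\mathsf{A}_m$ is perfect, commutators suffice. This cancellation is the step I would check most carefully, in particular that the coordinate supports stay disjoint and that $s_n$ has no stray support on $[\Sigma]$; condition \ref{cond:selector:E3} was checked in Lemma~\ref{selectorexist}.

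Bounded absorption \ref{cond:selector-system:A3} with $D=2$ is the main obstacle. The goal is $\iota_{n,n+2}(\mathsf{A}_n)\le\mathsf{C}_{n+2}$. I would first show that $\iota_{n,n+1}(\mathsf{A}_n)$ lies in the group generated by $\mathsf{C}_{n+1}$ together with the diagonal copies that are still synchronized. One more inflation then separates the synchronized copies, because the two-step paths $y\,x\,z$ through $0_0$ and $0_1$ are distinguished by the constraint that $0_0$ is followed only by $0_1$. Concretely, the copies $\mathcal{T}_{y,n}x$ for different $x$ lie in different level-$(n+1)$ tiles. At level $n+2$ the independent data on each $\mathcal{T}_{w,n+2}$ contain every predecessor copy $\mathcal{T}_{y,n}xz$ individually, and this gives the diagonal image of $\mathsf{A}_n$. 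I would verify this by a finite case analysis over the four residues $n\bmod 4$ and the four vertices. Once \ref{cond:selector-system:A3} is established, Theorem~\ref{thm:selector-bootstrap} yields $\mathsf{A}(\mathfrak{T}_{\mathsf{B}})\le\widehat{G}_{\mathrm{FG}}$.
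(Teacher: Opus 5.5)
There is a genuine gap: the data you carry cannot be regenerated by the propagation mechanism you describe. Put $I=\{0_0,1,2\}$.

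\textbf{The independent datum on $\mathcal{T}_{0_1,n}$ does not propagate.} Your item 3 is the independent group $\mathsf{A}(\mathcal{T}_{0_1,n})$ at every level; item 1 is either the same datum or already contained in $\mathsf{D}_n$. Inside $\mathcal{T}_{0_1,n+1}=\mathcal{T}_{0_0,n}0_1\sqcup\mathcal{T}_{1,n}0_1\sqcup\mathcal{T}_{2,n}0_1$, the group $\iota_{n,n+1}(\mathsf{C}_n)$ acts only through your item 2 (the paper's $\mathsf{D}_n$). It therefore applies one and the same model permutation to all three predecessor copies. Items 1 and 3 descend into $\mathcal{T}_{0_1,n}x\subseteq\mathcal{T}_{x,n+1}$ with $x\in I$, not into this tile. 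So Proposition~\ref{prop:selector-bridge-cycles} has nothing to glue here, since it needs separately available $\Delta\mathsf{A}(U_i)$ and $\Delta\mathsf{A}(V_i)$.

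There is also a structural obstruction. The three connecting points of $\mathsf{e}_n$ in this tile correspond to a single model point, so on this tile $s_n$ just cycles a triple of corresponding points. Like every diagonal element, it preserves the partition of the tile into such triples. Whatever you generate there is therefore transitive but imprimitive. This is exactly the paper's $\mathsf{M}_{n+1}=\langle\mathsf{E}_{n+1},h_1,h_2\rangle$, and it is not $\mathsf{A}(\mathcal{T}_{0_1,n+1})$. As a sanity check, if item 3 did propagate, then $\iota_{n,n+1}(\mathsf{A}_n)\leq\langle\mathsf{A}(\mathcal{T}_{0_1,n+1}),\mathsf{D}_{n+1}\rangle$ would already give delay $1$.

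\textbf{The bridge and cancellation steps.} A related problem occurs in the tiles $\mathcal{T}_{x,n+1}$, $x\in I$. There $\iota_{n,n+1}(\mathsf{D}_n)$ moves $\mathcal{T}_{1,n}x$ and $\mathcal{T}_{2,n}x$ together, so Proposition~\ref{prop:selector-bridge-cycles} does not apply as stated. Your cancellation step is vacuous as described. $\iota_{n,n+1}(\mathsf{A}(\mathcal{T}_{0_1,n}))$ is supported on $\bigsqcup_{x\in I}\mathcal{T}_{0_1,n}x$, while anything coming from $\mathcal{T}_{0_0,n}$ is supported on $\mathcal{T}_{0_0,n}0_1$. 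There are no common coordinates, so the commutators are trivial. Moreover, $\mathsf{A}(\mathcal{T}_{0_0,n})$ is never available independently.

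\textbf{What the paper does instead.} The paper carries $\mathsf{D}_n$, the diagonal datum $\mathsf{E}_n$ on the three predecessor copies inside $\mathcal{T}_{0_1,n}$, and only a transitive $\mathsf{M}_n\leq\mathsf{A}(\mathcal{T}_{0_1,n})$. The construction proceeds in this order.
\begin{enumerate}
\item To build $\mathsf{D}_{n+1}$, take $\epsilon\in\mathsf{E}_n$ whose $\mathcal{T}_{0_0,n-1}0_1$-component is a $3$-cycle through $p_0$. Descend it and conjugate by $s_n$, so that in each target tile it meets $\mathcal{T}_{1,n}x$ in one point and avoids $\mathcal{T}_{2,n}x$.
\item Conjugate by $\iota_{n,n+1}(\mathsf{D}_n)$ and multiply to cancel the common factor $\epsilon'$. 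This breaks the $1$/$2$ synchronization and gives $\Delta_{x\in I}\mathsf{A}(\mathcal{T}_{1,n}x\cup\{\beta_x\})$ and its $2$-analogue.
\item Glue these at $\beta_x$ (Corollary~\ref{cor:diagonal-common-anchor-gluing}), then extend over $\mathcal{T}_{0_1,n}x$ using the transitivity of $\mathsf{M}_n$ (Corollary~\ref{cor:adjoining-points-common-support}).
\item Only then obtain $\mathsf{E}_{n+1}$, by the cancellation $\iota_{n,n+1}(\sigma)h_I(\sigma)^{-1}$ with $h_I(\sigma)\in\mathsf{D}_{n+1}$. This isolates a diagonal group, not single coordinates.
\item Finally, obtain $\mathsf{M}_{n+1}$ from $\mathsf{E}_{n+1}$ and $s_n$.
\end{enumerate}

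\textbf{Absorption.} Your absorption paragraph relies on independent data on every $\mathcal{T}_{w,n+2}$, which this system does not have. The actual check is direct: in $\iota_{n,n+2}(g)$, the factors with two-step suffix $x0_1$ lie in $\mathsf{E}_{n+2}$, and each product $\prod_{x\in I}g_{yx}$ lies in $\mathsf{D}_{n+2}$. This works because the synchronization pattern of $\iota_{n,n+2}$ matches these two data exactly.
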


\begin{proof}
  We use the notation of Section~2. In particular, $\mathsf{S}_n=\prod_{v\in V_{n+1}}\mathsf{S}(\mathcal{T}_{v,n})$, $\mathsf{A}_n=\prod_{v\in V_{n+1}}\mathsf{A}(\mathcal{T}_{v,n})$, and $\iota_{n,m}$ is the canonical diagonal embedding.  We write its elements as disjoint products on the descendant copies. For instance, if $g\in\mathsf{S}(\mathcal{T}_{0_1,n})$, then \[\iota_{n,n+1}(g)=\prod_{x\in\{0_0,1,2\}}g_x,\] where $g_x$ acts as $g$ on the corresponding copy $\mathcal{T}_{0_1,n}x$, while
  \[\iota_{n,n+2}(g)=\left(\prod_{x\in \{0_0,1,2\}}g_{x0_1}\right)\left(\prod_{x\in \{0_0,1,2\}}g_{1x}\right)\left(\prod_{x\in \{0_0,1,2\}}g_{2x}\right)\] 
  where every component $g_{ab}$ has the analogous meaning.

  We proceed with induction on $n$. The levels $2$ and $3$ are treated separately, as shown in the following two lemmas.

  \begin{lemma}
    The group $\widehat{G}_{\mathrm{FG}}$ contains $\mathsf{A}(\mathcal{T}_{0_0,2})\oplus\mathsf{A}(\mathcal{T}_{0_1,2})\oplus\mathsf{A}(\mathcal{T}_{1,2})\oplus\mathsf{A}(\mathcal{T}_{2,2})$.
  \end{lemma}
  \begin{proof}
    For every $x\in\{0_0,0_1,1,2\}$, the tile $\mathcal{T}_{x,2}$ has exactly three vertices, and $a_x$ acts on these vertices as the displayed $3$-cycle in Figure~\ref{fig:four-triangular-connectors}. Therefore
    \[\langle a_x\rangle=\mathsf{A}(\mathcal{T}_{x,2}).\]
    The four supports are disjoint, so the four groups are contained in $\widehat{G}_{\mathrm{FG}}$ independently.
  \end{proof}

  \begin{lemma}
    The group $\widehat{G}_{\mathrm{FG}}$ contains $\mathsf{A}(\mathcal{T}_{0_1,3})$ and the diagonal subgroup of $\mathsf{A}(\mathcal{T}_{0_0,3})\oplus\mathsf{A}(\mathcal{T}_{1,3})\oplus\mathsf{A}(\mathcal{T}_{2,3})$.
  \end{lemma}
  \begin{proof}
    Let $n=3$. Since $0_0$ can only be continued to $0_00_1$, we have $\mathsf{A}(\mathcal{T}_{0_0,2}0_1)\leq\widehat{G}_{\mathrm{FG}}$. By similar reasoning, the groups $\mathsf{A}(\mathcal{T}_{1,2})$ and $\mathsf{A}(\mathcal{T}_{2,2})$ are embedded to be diagonal subgroups of $\mathsf{A}(\mathcal{T}_{1,2}0_0)\oplus\mathsf{A}(\mathcal{T}_{1,2}0_1)\oplus\mathsf{A}(\mathcal{T}_{1,2}1)\oplus\mathsf{A}(\mathcal{T}_{1,2}2)$ and $\mathsf{A}(\mathcal{T}_{2,2}0_0)\oplus\mathsf{A}(\mathcal{T}_{2,2}0_1)\oplus\mathsf{A}(\mathcal{T}_{2,2}1)\oplus\mathsf{A}(\mathcal{T}_{2,2}2)$, respectively. In other words, let $\sigma\in\mathsf{A}(\mathcal{T}_{1,2})$ and $\delta\in\mathsf{A}(\mathcal{T}_{2,2})$. Then $\iota_{2,3}(\sigma)=\sigma_{0_0}\sigma_{0_1}\sigma_{1}\sigma_{2}$ and $\iota_{2,3}(\delta)=\delta_{0_0}\delta_{0_1}\delta_{1}\delta_{2}$, where each $\sigma_x$ acts simultaneously on the corresponding isomorphic copy of $\mathcal{T}_{1,2}$ (with disjoint supports) and the same statement holds for each $\delta_x$, for $x\in\{0_0,0_1,1,2\}$. Recall that the tile $\mathcal{T}_{0_1,3}$ is obtained by taking one copy of each $\mathcal{T}_{0_0,2}$, $\mathcal{T}_{1,2}$, $\mathcal{T}_{2,2}$, appending $0_1$ and connecting them at the connecting points. Let $p_0\in\mathcal{T}_{0_0,2}0_1$, $p_1\in\mathcal{T}_{1,2}0_1$ and $p_2\in\mathcal{T}_{2,2}0_1$ be the connecting points connected by $e_2$.  Let $s=b_2b_1^2$. Then the germs of $s$ on the cylinder sets starting with $p_0,p_1,p_2$ belong to the AF groupoid. It follows that $s=(p_0\,p_1\,p_2)$ at the corresponding connecting points. The element $s$ is a selector: by Figure \ref{fig:FG-periodic-connectors}(a),(b), we have that $b_2b_1^2$ is only active at the connector graph $\mathsf{e}_2$ but not $\mathsf{e}_3$. Let $x,y\in\mathcal{T}_{0_0,2}0_1$. Then the 3-cycle $\alpha_0=(p_0\,x\,y)\in \widehat{G}_{\mathrm{FG}}$ since $\mathsf{A}(\mathcal{T}_{0_0,2}0_1)\leq\widehat{G}_{\mathrm{FG}}$. Then
    \[\alpha_1=s\alpha_0s^{-1}=(p_1\,x\,y),\quad \alpha_2=s^2\alpha_0s^{-2}=(p_2\,x\,y)\]
    are elements in $\widehat{G}_{\mathrm{FG}}$. Now choose nontrivial $\sigma\in\mathsf{A}(\mathcal{T}_{1,2})$ and
    $\delta\in\mathsf{A}(\mathcal{T}_{2,2})$ with $\iota_{2,3}(\sigma)(p_1)=u\in\mathcal{T}_{1,2}0_1$ and
    $\iota_{2,3}(\delta)(p_2)=v\in\mathcal{T}_{2,2}0_1$. Then by direct calculations, the conjugations
    \[\iota_{2,3}(\sigma)\alpha_1\iota_{2,3}(\sigma)^{-1}=(u\,x\,y),\quad \iota_{2,3}(\delta)\alpha_2\iota_{2,3}(\delta)^{-1}=(v\,x\,y)\]
    are elements in $\widehat{G}_{\mathrm{FG}}$. It follows that the group
    \[H:=\langle (u\,x\,y),(v\,x\,y):u\in\mathcal{T}_{1,2}0_1, v\in\mathcal{T}_{2,2}0_1\rangle =\mathsf{A}(\mathcal{T}_{1,2}0_1\cup\mathcal{T}_{2,2}0_1\cup\{x,y\}). \]
    Since $\mathsf{A}(\mathcal{T}_{0_0,2}0_1)$ acts transitively, we have
    \[\langle \mathsf{A}(\mathcal{T}_{0_0,2}0_1),H \rangle=\mathsf{A}(\mathcal{T}_{0_1,3})\leq\widehat{G}_{\mathrm{FG}}. \] See Figure \ref{fig:triangular-selector-local-picture} for a schematic description of the above.

    \begin{figure}[ht]
      \centering
      \begin{tikzpicture}[
          >=Stealth,
          vertex/.style={circle,fill=black,inner sep=1.8pt},
          lab/.style={font=\small},
          thickline/.style={line width=0.8pt}
        ]

        \coordinate (P0) at (0,2.6);
        \coordinate (P2) at (-2.6,0);
        \coordinate (P1) at (2.6,0);

        \coordinate (X)  at (0,3.65);
        \coordinate (Y)  at (0,4.75);

        \coordinate (V)  at (-3.95,-1.25);
        \coordinate (U)  at (3.95,-1.25);

        \draw[thickline] (P0) -- (P2) -- (P1) -- cycle;

        \draw[thickline] (P0) -- (0,5.45);
        \draw[thickline] (P2) -- (V);
        \draw[thickline] (P1) -- (U);

        \node[vertex] at (P0) {};
        \node[vertex] at (P2) {};
        \node[vertex] at (P1) {};
        \node[vertex] at (X)  {};
        \node[vertex] at (Y)  {};
        \node[vertex] at (V)  {};
        \node[vertex] at (U)  {};

        \node[lab,right=4pt] at (P0) {$p_0$};
        \node[lab,below=4pt] at (P2) {$p_2$};
        \node[lab,below=4pt] at (P1) {$p_1$};

        \node[lab,left=5pt] at (X) {$x$};
        \node[lab,left=5pt] at (Y) {$y$};

        \node[lab,below left=2pt] at (V) {$v$};
        \node[lab,below right=2pt] at (U) {$u$};

        \node[lab] at (0,0.95) {$s=b_2b_1^2\in\mathsf{e}_2$};

        \node[lab,right=10pt] at (0,4.75) {$\mathcal{T}_{0_0,2}0_1$};
        \node[lab,below=6pt] at (-2.95,-0.95) {$\mathcal T_{2,2}0_1$};
        \node[lab,below=6pt] at (2.95,-0.95) {$\mathcal T_{1,2}0_1$};

        \draw[->,thickline]
        (-3.55,-0.75)
        .. controls (-3.15,-0.35) and (-2.95,-0.10)
        .. (-2.68,-0.02);

        \draw[->,thickline]
        (2.68,-0.02)
        .. controls (2.95,-0.10) and (3.15,-0.35)
        .. (3.55,-0.75);

      \end{tikzpicture}
      \caption{A selector at the triangular bridge.}
      \label{fig:triangular-selector-local-picture}
    \end{figure}

    Now consider the embedded groups $\iota_{2,3}(\mathsf{A}(\mathcal{T}_{1,2}))$ and
    $\iota_{2,3}(\mathsf{A}(\mathcal{T}_{2,2}))$. Since the letter $1$ has all four letters $0_0,0_1,1,2$ as successors, for $\sigma\in\mathsf{A}(\mathcal{T}_{1,2})$, $\iota_{2,3}(\sigma)=\sigma_{0_0}\sigma_{0_1}\sigma_{1}\sigma_{2}$ in which each $\sigma_x$ acts as $\sigma$ in the corresponding $\mathcal{T}_{1,2}x$ for $x\in\{0_0,0_1,1,2\}$. By the same reasoning, for $\delta\in\mathsf{A}(\mathcal{T}_{2,2})$, $\iota_{2,3}(\delta)=\delta_{0_0}\delta_{0_1}\delta_{1}\delta_{2}$. Since we have shown that $\mathsf{A}(\mathcal{T}_{0_1,3})\leq\widehat{G}_{\mathrm{FG}}$, we can cancel the $0_1$ components of both
    $\iota_{2,3}(\sigma)$ and $\iota_{2,3}(\delta)$. It follows that
    \[\Delta(\mathsf{A}(\mathcal{T}_{1,2}0_0),\mathsf{A}(\mathcal{T}_{1,2}1),\mathsf{A}(\mathcal{T}_{1,2}2)),\Delta(\mathsf{A}(\mathcal{T}_{2,2}0_0),\mathsf{A}(\mathcal{T}_{2,2}1),\mathsf{A}(\mathcal{T}_{2,2}2))\leq \widehat{G}_{\mathrm{FG}}.\]
    Now consider $\iota_{2,3}(\mathsf{A}(\mathcal{T}_{0_1,2}))$. Since $0_1$ has successors $0_0,1,2$, it follows that we have
    \[\Delta(\mathsf{A}(\mathcal{T}_{0_1,2}0_0),\mathsf{A}(\mathcal{T}_{0_1,2}1),\mathsf{A}(\mathcal{T}_{0_1,2}2))\leq\widehat{G}_{\mathrm{FG}}.\]
    Let us show that \[\Delta(\mathsf{A}(\mathcal{T}_{0_0,3}),\mathsf{A}(\mathcal{T}_{1,3}),\mathsf{A}(\mathcal{T}_{2,3}))\leq \widehat{G}_{\mathrm{FG}}.\]
    The construction of each $\mathcal{T}_{x,3}$ is shown in Figure \ref{fig:FG-inflation-rule}. For $x\in\{0_0,1,2\}$, the tiles $\mathcal{T}_{x,3}$ are the disjoint unions
    $\mathcal{T}_{0_1,2}x\sqcup\mathcal{T}_{1,2}x\sqcup\mathcal{T}_{2,2}x$ connected by $\mathsf{e}_n$ at the corresponding connecting points $p_{0_1,x},p_{1,x},p_{2,x}$. Let $s=b_2b_1^2$ be the same selector as above. Then $s=(p_{0_1,x}\,p_{1,x}\,p_{2,x})$ at the corresponding connecting points. In other words, 
    \[s=\prod\limits_{x\in\{0_0,1,2\}}(p_{0_1,x}\,p_{1,x}\,p_{2,x})\] 
    when restricted at these connecting points. Let $\alpha_x,\beta_x \in\mathcal{T}_{0_1,2}x$ be the two points different from $p_{0_1,x}$. Then
    \[\pi_0:=\prod_{x\in\{0_0,1,2\}}(p_{0_1,x}\,\alpha_x\,\beta_x)\] is an element in $\widehat{G}_{\mathrm{FG}}$. Then
    \[\pi_1:=s\pi_0 s^{-1}=\prod_{x\in\{0_0,1,2\}}(p_{1,x}\,\alpha_x\,\beta_x)\] and
    \[\pi_2:=s^2\pi_0 s^{-2}=\prod_{x\in\{0_0,1,2\}}(p_{2,x}\,\alpha_x\,\beta_x)\] are elements in $\widehat{G}_{\mathrm{FG}}$.

    Let $u\in\mathcal{T}_{1,2}$. Since $\mathsf{A}(\mathcal{T}_{1,2})$ acts transitively, we can choose
    $h_u\in\mathsf{A}(\mathcal{T}_{1,2})$ such that $h_u(p_1)=u$, where $p_1$ is the truncation of the connecting point $p_{1,x}\in\mathcal{T}_{1,2}x$. Then $\iota_{2,3}(h_u)=\prod_{x\in\{0_0,1,2\}}(h_u)_x$ where $(h_u)_x$ acts as $h_u$ on $\mathcal{T}_{1,2}x$ and identity on the other two embeddings. Then
    \[g_u:=\iota_{2,3}(h_u)\pi_1\iota_{2,3}(h_u)^{-1}=\prod_{x\in\{0_0,1,2\}}((ux)\,\alpha_x\,\beta_x)\] 
    is an element in $\widehat{G}_{\mathrm{FG}}$, for each $u\in\mathcal{T}_{1,2}$. By the same argument, for each
    $v\in\mathcal{T}_{2,2}$,
    \[g_v:=\iota_{2,3}(h_v)\pi_2\iota_{2,3}(h_v)^{-1}=\prod_{x\in\{0_0,1,2\}}((vx)\,\alpha_x\,\beta_x)\] is also an element in $\widehat{G}_{\mathrm{FG}}$. Then
    \[g_u^{-1}g_v=\prod_{x\in\{0_0,1,2\}}((ux)\,(vx)\,\beta_x).\]
    Each component $((ux)\,(vx)\,\beta_x)$ is a mixed cycle meeting all three old pieces $\mathcal{T}_{0_1,2},\mathcal{T}_{1,2},\mathcal{T}_{2,2}$. Hence after conjugating $g_u^{-1}g_v$ by elements in
    $\iota_{2,3}(\mathsf{A}(\mathcal{T}_{0_1,2})\oplus\mathsf{A}(\mathcal{T}_{1,2})\oplus\mathsf{A}(\mathcal{T}_{2,2}))$, we generate $\Delta(\mathsf{A}(\mathcal{T}_{0_0,3}),\mathsf{A}(\mathcal{T}_{1,3}),\mathsf{A}(\mathcal{T}_{2,3}))$.
  \end{proof}

  For $n\geq3$, put
  \begin{equation}
    \mathsf{D}_n:=
    \Delta\bigl(\mathsf{A}(\mathcal{T}_{0_0,n}),\mathsf{A}(\mathcal{T}_{1,n}),\mathsf{A}(\mathcal{T}_{2,n})\bigr).
    \label{diagonn}
  \end{equation}
  and, inside $\mathcal{T}_{0_1,n}$, put
  \begin{equation}
    \mathsf{E}_n:=
    \Delta\bigl(\mathsf{A}(\mathcal{T}_{0_0,n-1}0_1),\mathsf{A}(\mathcal{T}_{1,n-1}0_1),\mathsf{A}(\mathcal{T}_{2,n-1}0_1)\bigr).
    \label{diagonnmin1}
  \end{equation}
  At level $3$, the preceding lemma gives $\mathsf{D}_3\leq\widehat{G}_{\mathrm{FG}}$ and $\mathsf{A}(\mathcal{T}_{0_1,3})\leq\widehat{G}_{\mathrm{FG}}$. We take
  $\mathsf{M}_3:=\mathsf{A}(\mathcal{T}_{0_1,3})$ and put $\mathsf{C}_3:=\langle\mathsf{D}_3,\mathsf{E}_3,\mathsf{M}_3\rangle$. Since $\mathsf{E}_3\leq\mathsf{M}_3$, we have $\mathsf{C}_3\leq\widehat{G}_{\mathrm{FG}}$.

  Suppose now that $n\geq3$ and that a transitive subgroup $\mathsf{M}_n\leq\widehat{G}_{\mathrm{FG}}\cap\mathsf{A}(\mathcal{T}_{0_1,n})$ has been constructed. Put $\mathsf{C}_n:=\langle\mathsf{D}_n,\mathsf{E}_n,\mathsf{M}_n\rangle$. We prove that
  \[\mathsf{C}_{n+1}\leq
    \left\langle\iota_{n,n+1}(\mathsf{C}_n),s_n\right\rangle\qquad(n\geq3).\]
  Put $\mathsf{K}_{n+1}:=\langle\iota_{n,n+1}(\mathsf{C}_n),s_n\rangle$. Let $I=\{0_0,1,2\}$. We use $\Delta_{x\in I}$
  for a diagonal group on canonically identified copies indexed by $I$. For every $x\in I$, we have
  \begin{equation}
    \mathcal{T}_{x,n+1}=
    \mathcal{T}_{0_1,n}x\sqcup\mathcal{T}_{1,n}x\sqcup\mathcal{T}_{2,n}x
    \label{01decompose}
  \end{equation}
  as sets of vertices.  Let $p_{0,x}\in\mathcal{T}_{0_1,n}x$, $p_{1,x}\in\mathcal{T}_{1,n}x$ and $p_{2,x}\in\mathcal{T}_{2,n}x$ be the connecting points. We write $p_{j,x}=p_jx$ for their truncations $p_j$ at level $n$. Let $s_n$ be the selector for $\mathsf{e}_n$ supplied by Lemma~\ref{selectorexist}. On the connecting points, we have $s_n=\prod_{x\in I}(p_{0,x}\,p_{1,x}\,p_{2,x})$.

  We first construct $\mathsf{D}_{n+1}$. The connecting point $p_0\in\mathcal{T}_{0_1,n}$ lies in the subcopy
  $\mathcal{T}_{0_0,n-1}0_1$; write $p_0=q_00_1$ with $q_0\in\mathcal{T}_{0_0,n-1}$. Choose distinct $\alpha_0,\beta_0\in\mathcal{T}_{0_0,n-1}\setminus\{q_0\}$. Since $\mathsf{E}_n$ is diagonal, there are canonically
  corresponding points $q_j,\alpha_j,\beta_j\in\mathcal{T}_{j,n-1}$, $j=1,2$, such that
  \begin{equation}
    \begin{aligned}
      \epsilon={} & (q_00_1\,\alpha_00_1\,\beta_00_1)
      (q_10_1\,\alpha_10_1\,\beta_10_1)                    \\
                  & \cdot(q_20_1\,\alpha_20_1\,\beta_20_1)
      \in\mathsf{E}_n.
    \end{aligned}
    \label{longproduct}
  \end{equation}
  For $x\in I$, put $\alpha_x=\alpha_00_1x$ and $\beta_x=\beta_00_1x$.  Then
  \begin{equation}
    \pi_0:=\iota_{n,n+1}(\epsilon)
    =
    \epsilon'\prod_{x\in I}(p_{0,x}\,\alpha_x\,\beta_x),
    \label{extractfunctioningpart}
  \end{equation}
  where $\epsilon'$ is the product of the descendant cycles coming from the other two coordinates of \eqref{longproduct}.  The selector $s_n$ fixes $\alpha_x,\beta_x$ and the support of $\epsilon'$. Hence
  \[\pi_1=s_n\pi_0s_n^{-1}
    =\epsilon'\prod_{x\in I}(p_{1,x}\,\alpha_x\,\beta_x),\]
  and
  \[\pi_2=s_n^2\pi_0s_n^{-2}
    =\epsilon'\prod_{x\in I}(p_{2,x}\,\alpha_x\,\beta_x).\]

  Let $u\in\mathcal{T}_{1,n}$. The $\mathcal{T}_{1,n}$-coordinate of $\mathsf{D}_n$ is the full alternating group, so we may choose $d_u\in\mathsf{D}_n$ such that $d_u(p_1)=u$. The other coordinates of $\iota_{n,n+1}(d_u)$ are disjoint from the support of $\pi_1$, and therefore
  \[\varphi_u:=
    \iota_{n,n+1}(d_u)\pi_1\iota_{n,n+1}(d_u)^{-1}=
    \epsilon'\prod_{x\in I}(ux\,\alpha_x\,\beta_x).
  \]
  Similarly, for every $v\in\mathcal{T}_{2,n}$, we obtain $\psi_v=\epsilon'\prod_{x\in I}(vx\,\alpha_x\,\beta_x)$. For distinct $u,u'\in\mathcal{T}_{1,n}$, we have
  \[\varphi_{u'}^{-1}\varphi_u=
    \prod_{x\in I}(ux\,u'x\,\beta_x).\]
  As $u,u'$ vary, these elements generate
  \[\mathsf{L}_1:=
    \Delta_{x\in I}\mathsf{A}(\mathcal{T}_{1,n}x\cup\{\beta_x\}).\]
  Likewise, the corresponding elements obtained from the $\psi_v$ generate
  \[\mathsf{L}_2:=
    \Delta_{x\in I}\mathsf{A}(\mathcal{T}_{2,n}x\cup\{\beta_x\}).\]
  By Corollary~\ref{cor:diagonal-common-anchor-gluing},
  \[\mathsf{L}_{12}:=\langle \mathsf{L}_1,\mathsf{L}_2\rangle=
    \Delta_{x\in I}\mathsf{A}(\mathcal{T}_{1,n}x\cup\mathcal{T}_{2,n}x\cup\{\beta_x\}).\]
  Choose distinct $u_1,u_2\in\mathcal{T}_{1,n}$. Then $\mathsf{L}_{12}$ contains $\pi_{\beta}=\prod_{x\in
      I}(u_1x\,u_2x\,\beta_x)$. Put $\bar\beta=\beta_00_1\in\mathcal{T}_{0_1,n}$. For $\gamma\in\mathcal{T}_{0_1,n}$, choose $r_\gamma\in\mathsf{M}_n$ with $r_\gamma(\bar\beta)=\gamma$. We obtain
  \[\iota_{n,n+1}(r_\gamma)\pi_\beta\iota_{n,n+1}(r_\gamma)^{-1}=
    \prod_{x\in I}(u_1x\,u_2x\,\gamma x).\]
  Corollary~\ref{cor:adjoining-points-common-support}, applied diagonally, now gives
  \[\mathsf{D}_{n+1}=
    \Delta\bigl(\mathsf{A}(\mathcal{T}_{0_0,n+1}),\mathsf{A}(\mathcal{T}_{1,n+1}),\mathsf{A}(\mathcal{T}_{2,n+1})\bigr)\leq\mathsf{K}_{n+1}.\]

  We next construct $\mathsf{E}_{n+1}$. Let $\sigma\in\mathsf{D}_n$. Since $0_0$ has only the successor $0_1$, while $1$ and $2$ have the successors $0_0,0_1,1,2$, we may write $\iota_{n,n+1}(\sigma)=h_{0_1}(\sigma)h_I(\sigma)$, where
  \[h_{0_1}(\sigma):=
    \sigma_{\mathcal{T}_{0_0,n}0_1}\sigma_{\mathcal{T}_{1,n}0_1}\sigma_{\mathcal{T}_{2,n}0_1}\]
  and
  \[h_I(\sigma):=
    \prod_{x\in I}\sigma_{\mathcal{T}_{1,n}x}\sigma_{\mathcal{T}_{2,n}x}.
  \]
  The element $h_I(\sigma)$ belongs to $\mathsf{D}_{n+1}$: in every target tile it acts as the same even permutation on the $\mathcal{T}_{1,n}x$- and $\mathcal{T}_{2,n}x$-subcopies and trivially on the $\mathcal{T}_{0_1,n}x$-subcopy. Therefore $h_{0_1}(\sigma)=\iota_{n,n+1}(\sigma)h_I(\sigma)^{-1}$ belongs to $\mathsf{K}_{n+1}$. As $\sigma$ varies, these elements generate
  \[\mathsf{E}_{n+1}=
    \Delta\bigl(\mathsf{A}(\mathcal{T}_{0_0,n}0_1),\mathsf{A}(\mathcal{T}_{1,n}0_1),\mathsf{A}(\mathcal{T}_{2,n}0_1)\bigr)\leq\mathsf{K}_{n+1}.\]

  We finally construct $\mathsf{M}_{n+1}$. The tile $\mathcal{T}_{0_1,n+1}$ is the disjoint union
  $\mathcal{T}_{0_0,n}0_1\sqcup\mathcal{T}_{1,n}0_1\sqcup\mathcal{T}_{2,n}0_1$. Let $p_x\in\mathcal{T}_{x,n}0_1$, $x\in I$, be the connecting points, and choose canonically corresponding points $\alpha_x,\beta_x\in\mathcal{T}_{x,n}0_1$. Then $\pi:=\prod_{x\in I}(p_x\,\alpha_x\,\beta_x)$ belongs to $\mathsf{E}_{n+1}$. On the connecting points, $s_n$ acts as $(p_{0_0}\,p_1\,p_2)$. Put
  \[h_1=(s_n\pi s_n^{-1})\pi^{-1},
    \qquad
    h_2=(s_n^2\pi s_n^{-2})\pi^{-1}.\]
  A direct calculation gives $h_1(p_{0_0})=p_1$ and $h_2(p_{0_0})=p_2$. Both elements are even and supported in
  $\mathcal{T}_{0_1,n+1}$. Hence
  \[\mathsf{M}_{n+1}:=
  \langle\mathsf{E}_{n+1},h_1,h_2\rangle
    \leq
    \mathsf{K}_{n+1}\cap\mathsf{A}(\mathcal{T}_{0_1,n+1})\]
  acts transitively on $\mathcal{T}_{0_1,n+1}$.

  The constructions above give
  \[\mathsf{C}_{n+1}\leq\mathsf{K}_{n+1}
    =\left\langle\iota_{n,n+1}(\mathsf{C}_n),s_n\right\rangle.\]

  Bounded absorption is given by the following lemma.

  \begin{lemma}
    \label{fg:bounded-absorption}
    For every $n\geq3$, we have
    \[\iota_{n,n+2}(\mathsf{A}_n)\leq
      \langle\mathsf{E}_{n+2},\mathsf{D}_{n+2}\rangle\leq
      \mathsf{C}_{n+2}.\]
  \end{lemma}

  \begin{proof}
    Let $g\in\mathsf{A}(\mathcal{T}_{0_0,n})$. Then $\iota_{n,n+2}(g)=\prod_{x\in I}g_{0_1x}\in\mathsf{D}_{n+2}$.

    Let $g\in\mathsf{A}(\mathcal{T}_{0_1,n})$. Then
    \[\iota_{n,n+2}(g)=
    \left(\prod_{x\in I}g_{x0_1}\right)\left(\prod_{x\in I}g_{1x}\right)\left(\prod_{x\in I}g_{2x}\right).\]
    The first factor belongs to $\mathsf{E}_{n+2}$ and the last two belong to $\mathsf{D}_{n+2}$.

    Let $g\in\mathsf{A}(\mathcal{T}_{1,n})$ or $g\in\mathsf{A}(\mathcal{T}_{2,n})$. Then
    \[\iota_{n,n+2}(g)=
      \left(\prod_{x\in I}g_{x0_1}\right)\left(\prod_{x\in I}g_{0_1x}\right)\left(\prod_{x\in I}g_{1x}\right)\left(\prod_{x\in I}g_{2x}\right).\]
    The first factor belongs to $\mathsf{E}_{n+2}$ and the other three belong to $\mathsf{D}_{n+2}$.
  \end{proof}

 Let $\mathscr{D}_n$ consist of the two synchronized alternating data with underlying subgroups $\mathsf{D}_n$ and $\mathsf{E}_n$, and let $\mathsf{M}_n$ be the auxiliary finite-level subgroup used to move connecting points.  The level-$3$ construction verifies the finite seed condition~\ref{cond:selector-system:A1}. The constructions of
$\mathsf{D}_{n+1}$, $\mathsf{E}_{n+1}$ and $\mathsf{M}_{n+1}$ establish
the propagation condition~\ref{cond:selector-system:A2}.  Finally,
Lemma~\ref{fg:bounded-absorption} verifies the bounded absorption
condition~\ref{cond:selector-system:A3} with $D=2$. Therefore, $\{(\mathscr{D}_n,\mathsf{C}_n)\}_{n\geq3}$ is a selector-absorbing alternating system.
\end{proof}

\subsection{The dimension group and the AF symmetric group}
\label{subsec:FG-dimension-AF-symmetric}

With the vertices ordered as $0_0,0_1,1,2$, the incidence matrix is
\[
  A_{\mathrm{FG}}=
  \begin{pmatrix}
    0 & 1 & 0 & 0 \\
    1 & 0 & 1 & 1 \\
    1 & 1 & 1 & 1 \\
    1 & 1 & 1 & 1
  \end{pmatrix}.
\]
We use column vectors, so the bonding map for the dimension group is
\[
  M_{\mathsf{B}}=
  A_{\mathrm{FG}}^{\mathsf{T}}=
  \begin{pmatrix}
    0 & 1 & 1 & 1 \\
    1 & 0 & 1 & 1 \\
    0 & 1 & 1 & 1 \\
    0 & 1 & 1 & 1
  \end{pmatrix}.
\]

\begin{prop}[Dimension group]
  \label{prop:FG-dimension-group}
  Let $\mathbb{Z}[1/3]\to\mathbb{Z}/4\mathbb{Z}$ be the reduction map
  determined by $1/3\mapsto3$.  We have an isomorphism of ordered groups
  \[H_0(\mathfrak{T}_{\mathsf{B}};\mathbb{Z})\cong\mathcal{D}:=\left\{(t,k)\in\mathbb{Z}[1/3]\oplus\mathbb{Z}:t\equiv k\pmod4\right\},\]
  where
  \[\mathcal{D}^+=\{0\}\cup\left\{(t,k)\in\mathcal{D}:t>0\right\}.\]
  Equivalently, the map $(r,k)\mapsto(4r+k,k)$ gives an isomorphism $\mathbb{Z}[1/3]\oplus\mathbb{Z}\to\mathcal{D}$,
  under which the positive cone is
  \[\{0\}\cup\left\{(r,k)\in\mathbb{Z}[1/3]\oplus\mathbb{Z}:4r+k>0\right\}.\]
\end{prop}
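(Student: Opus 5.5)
The plan is to compute $H_0(\mathfrak{T}_{\mathsf{B}};\mathbb{Z})$ as the direct limit $\varinjlim(\mathbb{Z}^4\xrightarrow{M_{\mathsf{B}}}\mathbb{Z}^4\xrightarrow{M_{\mathsf{B}}}\cdots)$ with the standard positive cones. This is the usual description of $K_0$ of the AF groupoid of a stationary Bratteli diagram. I will then reduce it to an explicit rank-two computation.

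\emph{Step 1: reduce to an invariant rank-two lattice.} Rows $1,3,4$ of $M_{\mathsf{B}}$ coincide, so $M_{\mathsf{B}}$ has rank $2$. Its image is the lattice $L=\mathbb{Z}g\oplus\mathbb{Z}f$, where $g=e_2=(0,1,0,0)^{\mathsf{T}}$ and $f=(1,0,1,1)^{\mathsf{T}}$. This holds because the columns are $g$, $f$, and twice $(1,1,1,1)^{\mathsf{T}}=f+g$.

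One checks $M_{\mathsf{B}}g=f$ and $M_{\mathsf{B}}f=2f+3g$. Hence $L$ is $M_{\mathsf{B}}$-invariant, and in the basis $(f,g)$ the restriction is $N=\begin{pmatrix}2&1\\3&0\end{pmatrix}$. Replacing the first copy of $\mathbb{Z}^4$ by $M_{\mathsf{B}}\mathbb{Z}^4=L$ does not change the direct limit. Thus $H_0\cong\varinjlim(\mathbb{Z}^2,N)$, with positive cone induced from $(\mathbb{Z}^4)^+$ after one further application of $M_{\mathsf{B}}$.

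\emph{Step 2: diagonalize.} The characteristic polynomial is $\lambda^2-2\lambda-3$, with eigenvalues $3$ and $-1$. The corresponding left eigenvectors are $\ell_3=(3,1)$ and $\ell_{-1}=(1,-1)$. I define the map on the $n$-th stage by
\[
x\longmapsto\bigl(3^{-n}\ell_3x,\;(-1)^n\ell_{-1}x\bigr)\in\mathbb{Z}[1/3]\oplus\mathbb{Z}.
\]
This map is compatible with $N$, and it is injective since $\det N=-3\neq0$ and $\ell_3,\ell_{-1}$ are independent.

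The image is then determined by a congruence. At stage $0$ we have $\ell_3x-\ell_{-1}x=2x_1+2x_2$, so the image lies in a sublattice of index dividing $4$. Passing to stage $n$ multiplies the first coordinate by $3^{-n}$ and the second by $(-1)^n$. Since $3\equiv-1\pmod 4$, this is consistent with the reduction $1/3\mapsto3$ in $\mathbb{Z}/4\mathbb{Z}$.

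After a harmless rescaling of the second functional, which I would fix by matching the class of the unit cylinders, I expect to land exactly on $\mathcal{D}=\{t\equiv k\pmod 4\}$. The alternative parametrization $(r,k)\mapsto(4r+k,k)$ is then immediate, since $t-k\in4\mathbb{Z}[1/3]$.

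\emph{Step 3: identify the order.} $\mathsf{B}$ is simple and stationary with primitive matrix ($M_{\mathsf{B}}^2>0$). By Perron--Frobenius, an element is positive iff it is $0$ or its Perron coordinate $\lim 3^{-n}\ell_3(\cdot)$ is strictly positive. The reason is that the $(-1)$-component stays bounded while the $3$-component grows, so $M_{\mathsf{B}}^n x$ eventually has all coordinates positive when the Perron coordinate is positive. This gives $\mathcal{D}^+=\{0\}\cup\{t>0\}$.

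The main obstacle is Step 2's exact lattice: checking that the image is all of $\mathcal{D}$ rather than a proper sublattice of index $2$. I would verify this by exhibiting the images of $f$, $g$ and $N^{-1}$-preimages at deeper levels, and checking that they generate the index-$4$ congruence lattice.
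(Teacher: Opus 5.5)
Your route is the paper's. You pass to the invariant lattice $\mathbb{Z}g\oplus\mathbb{Z}f$ (the paper's $\mathbb{Z}u\oplus\mathbb{Z}v$), map stage $n$ by $3^{-n}\ell_3$ and $(-1)^n\ell_{-1}$, and read the order off the Perron coordinate.

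\textbf{The gap.} The step that actually identifies $\mathcal{D}$ is left as ``I expect'', and the computation you offer toward it cannot deliver it. The relation $\ell_3x-\ell_{-1}x=2x_1+2x_2$ gives only a congruence modulo $2$. What you need are the inversion formulas
\[
\ell_3x+\ell_{-1}x=4x_1,\qquad \ell_3x-3\ell_{-1}x=4x_2 .
\]
Put $a=3^nt$ and $c=(-1)^nk$. Then $(t,k)$ is the image of an integral vector at stage $n$ if and only if $3^nt\in\mathbb{Z}$ and $a\equiv -c\pmod 4$, i.e.\ $t\equiv -k\pmod 4$.

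So with your normalization the image is exactly $\{(t,k):t\equiv -k\pmod 4\}$, which is not $\mathcal{D}$; for instance $f\mapsto(3,1)\notin\mathcal{D}$. The needed correction is precisely $k\mapsto -k$, i.e.\ using $(-1)^n(x_2-x_1)$. This is the paper's functional $(-1)^n(x-y)$ in the basis $(u,v)=(g,f)$, and it is an order automorphism because the cone depends only on $t$. A rescaling of the $\mathbb{Z}$-coordinate alone must be by $\pm1$, and the order unit plays no role, since the statement fixes none; so ``matching the unit cylinders'' is not the right criterion. Your stated worry about landing in an index-$2$ sublattice is also misdirected. Without the sign change the image is not even contained in $\mathcal{D}$; with it, the inversion formulas give equality immediately, the stage map having determinant $-4$.

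\textbf{Positive cone.} The same formulas settle the cone directly, which is how the paper argues. If $(t,k)$ is in the image with $t>0$, choose $n$ with $3^nt\in\mathbb{Z}$ and $3^nt>3|k|$. Then $x_1=(a+c)/4$ and $x_2=(a-3c)/4$ are nonnegative integers. Conversely, $x_1,x_2\ge0$ not both zero gives $\ell_3x>0$. Your eigenvector argument also works, but the parenthetical claim $M_{\mathsf{B}}^2>0$ is false: the $(2,1)$ entry of $M_{\mathsf{B}}^2$ is $0$, and it is $M_{\mathsf{B}}^3$ that is positive. Primitivity is not needed in the rank-two computation anyway.
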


\begin{proof}
  We have
  \[H_0(\mathfrak{T}_{\mathsf{B}};\mathbb{Z})=\varinjlim\left(\mathbb{Z}^4\xrightarrow{M_{\mathsf{B}}}\mathbb{Z}^4
    \xrightarrow{M_{\mathsf{B}}}\cdots\right).\]
  Put
  \[u=
    \begin{pmatrix}
      0 \\1\\0\\0
    \end{pmatrix},
    \qquad
    v=
    \begin{pmatrix}
      1 \\0\\1\\1
    \end{pmatrix}.
  \]
  The image of $M_{\mathsf{B}}$ is $\mathbb{Z}u\oplus\mathbb{Z}v$, and
  \[ M_{\mathsf{B}}u=v,\qquad M_{\mathsf{B}}v=3u+2v.\]
  After deleting the first term of the direct system, we therefore obtain
  \[\mathbb{Z}^2\xrightarrow{C}\mathbb{Z}^2\xrightarrow{C}\cdots,\qquad
    C=
    \begin{pmatrix}
      0 & 3 \\
      1 & 2
    \end{pmatrix}.
  \]

  For $n\geq0$, define
  \[\Theta_n(x,y)=
    \left(\frac{x+3y}{3^n},(-1)^n(x-y)\right).\]
  If $C(x,y)=(3y,x+2y)$, then
  \[
    \begin{aligned}
      \Theta_{n+1}(C(x,y))
       & =
      \left(
      \frac{3y+3x+6y}{3^{n+1}},
      (-1)^{n+1}(3y-x-2y)
      \right) \\
       & =
      \left(
      \frac{x+3y}{3^n},
      (-1)^n(x-y)
      \right) \\
       & =
      \Theta_n(x,y).
    \end{aligned}
  \]
  The maps $\Theta_n$ therefore define an injective homomorphism from the direct limit to
  $\mathbb{Z}[1/3]\oplus\mathbb{Z}$.

  Let $\Theta_n(x,y)=(t,k)$, and put $a=3^nt$ and $c=(-1)^nk$. Then
  \[x=\frac{a+3c}{4},\qquad y=\frac{a-c}{4}.\]
  Thus $x,y\in\mathbb{Z}$ if and only if $a\equiv c\pmod4$. Since $3^n\equiv(-1)^n\pmod4$, this is equivalent to $t\equiv k\pmod4$. Consequently, the image of the direct limit is $\mathcal{D}$.

  If a positive element is represented by $(x,y)\in\mathbb{Z}_{\geq0}^2$, then its first coordinate under $\Theta_n$ is positive unless $x=y=0$. Conversely, suppose $(t,k)\in\mathcal{D}$ and $t>0$. We may choose $n$ such that $3^nt$ is an integer and $3^nt>3|k|$. The formulas above then give $x,y\geq0$. This proves the description of the positive cone.

  Finally, $(r,k)\mapsto(4r+k,k)$ maps $\mathbb{Z}[1/3]\oplus\mathbb{Z}$ isomorphically onto $\mathcal{D}$.
\end{proof}

\begin{prop}[AF parity]
  \label{prop:FG-AF-parity}
  We have
  \[
    \mathsf{P}_{\mathsf{B}}:=
    H_0\bigl(\mathfrak{T}_{\mathsf{B}};\mathbb{Z}/2\mathbb{Z}\bigr)
    \cong
    (\mathbb{Z}/2\mathbb{Z})^2.\]
  Moreover,
  \[\mathsf{P}_{\widehat{G}_{\mathrm{FG}}}:=\varepsilon_{\mathsf{B}}
    \bigl(\widehat{G}_{\mathrm{FG}}\cap\mathsf{S}(\mathfrak{T}_{\mathsf{B}})\bigr)=0.\]
  Consequently,
  \[\widehat{G}_{\mathrm{FG}}\cap\mathsf{S}(\mathfrak{T}_{\mathsf{B}})=\mathsf{A}(\mathfrak{T}_{\mathsf{B}}).\]
\end{prop}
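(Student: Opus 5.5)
The plan has two parts: compute $\mathsf{P}_{\mathsf{B}}$ from the direct system, then show that no element of $\widehat{G}_{\mathrm{FG}}$ lying in $\mathsf{S}(\mathfrak{T}_{\mathsf{B}})$ has nontrivial parity. The final assertion then follows from the exact sequence $1\to\mathsf{A}(\mathfrak{T}_{\mathsf{B}})\to\mathsf{S}(\mathfrak{T}_{\mathsf{B}})\to\mathsf{P}_{\mathsf{B}}\to1$ together with Proposition~\ref{prop:FG-selector-system}, which gives $\mathsf{A}(\mathfrak{T}_{\mathsf{B}})\leq\widehat{G}_{\mathrm{FG}}$.

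\emph{Computing $\mathsf{P}_{\mathsf{B}}$.} Direct limits commute with $-\otimes\mathbb{Z}/2\mathbb{Z}$, so $\mathsf{P}_{\mathsf{B}}=\varinjlim\bigl((\mathbb{Z}/2\mathbb{Z})^4,M_{\mathsf{B}}\bmod 2\bigr)$. As in the proof of Proposition~\ref{prop:FG-dimension-group}, the image of $M_{\mathsf{B}}$ is spanned by $u,v$, and the system reduces to $C\bmod 2=\begin{pmatrix}0&1\\1&0\end{pmatrix}$. This matrix is invertible over $\mathbb{Z}/2\mathbb{Z}$, so the limit is $(\mathbb{Z}/2\mathbb{Z})^2$. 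As a consistency check, $\mathcal{D}\cong\mathbb{Z}[1/3]\oplus\mathbb{Z}$, and tensoring with $\mathbb{Z}/2\mathbb{Z}$ also gives $(\mathbb{Z}/2\mathbb{Z})^2$. Concretely, $\varepsilon_{\mathsf{B}}(g)$ for $g\in\mathsf{S}_n$ is the class of its tile-parity vector $(\operatorname{sgn} g|_{\mathcal{T}_{v,n}})_{v}\in(\mathbb{Z}/2\mathbb{Z})^{V_{n+1}}$.

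\emph{Showing $\mathsf{P}_{\widehat{G}_{\mathrm{FG}}}=0$.} I will build enough homomorphisms out of $\widehat{G}_{\mathrm{FG}}$ to detect both parity coordinates. All six generators $a_{0_0},a_{0_1},a_1,a_2,b_1,b_2$ are defined by finite-state recursions over the Markov shift, so they act on the finite sets of admissible prefixes $\Omega_n(\mathsf{B})$. This gives homomorphisms $\pi_n:\widehat{G}_{\mathrm{FG}}\to\mathsf{S}(\Omega_n(\mathsf{B}))$. Every generator has order $3$, so $\pi_n(\widehat{G}_{\mathrm{FG}})$ consists of even permutations, and the same holds for the restriction to any $\pi_n(\widehat{G}_{\mathrm{FG}})$-invariant subset. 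Now let $g\in\widehat{G}_{\mathrm{FG}}\cap\mathsf{S}_m$. For $n\geq m$, $\pi_n(g)$ coincides with the tile action of $\iota_{m,n}(g)$. Hence for every invariant subset $W\subseteq\Omega_n(\mathsf{B})$ we get $\sum_{v}\lambda_W(v)\,p_n(v)=0$, where $p_n$ is the parity vector of $\iota_{m,n}(g)$ and $\lambda_W(v)$ counts, mod $2$, the tile-$v$ vertices lying in $W$.

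\emph{Main obstacle.} Taking $W=\Omega_n(\mathsf{B})$ gives the functional $\mathbf{1}$. In the $u,v$ coordinates it evaluates to $(1,1)$, and this is preserved by the swap. So it detects only one of the two coordinates of $\mathsf{P}_{\mathsf{B}}$, and I need a second, independent functional. I will first try a nontrivial invariant subset of prefixes, taking the orbits of $\pi_n(\widehat{G}_{\mathrm{FG}})$ on $\Omega_n(\mathsf{B})$. I expect them to be distinguished by the last letter, in the spirit of the splitting of $\lambda^{-1}$ over eventually-$0$ words, possibly together with the $0_0/0_1$ alternation. I will then check that the resulting functional is independent of $\mathbf{1}$ modulo $2$ on $\operatorname{span}(u,v)$. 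If the orbits turn out to be all of $\Omega_n(\mathsf{B})$, the fallback is to compare two consecutive levels $n$ and $n+1$ via the functionals $\mathbf{1}^{\mathsf{T}}M_{\mathsf{B}}^k$. If these also fail to separate, I will use a sign on a refined quotient, for example the action on pairs, or on the prefixes of length $n$ read through $\lambda$ in $\mathsf{X}^n$. In every variant the generators remain of order $3$, so the sign vanishes. Once two independent vanishing functionals are available, $p_n=0$ in the limit and $\varepsilon_{\mathsf{B}}(g)=0$.
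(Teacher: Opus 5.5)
Your computation of $\mathsf{P}_{\mathsf{B}}$ is the paper's: modulo $2$, $C$ becomes the swap $\overline{M}_{\mathsf{B}}\overline{u}=\overline{v}$, $\overline{M}_{\mathsf{B}}\overline{v}=\overline{u}$. The final deduction via the parity exact sequence and $\mathsf{A}(\mathfrak{T}_{\mathsf{B}})\leq\widehat{G}_{\mathrm{FG}}$ is also fine.

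The genuine gap is in the proof of $\mathsf{P}_{\widehat{G}_{\mathrm{FG}}}=0$. The only vanishing functional you actually produce is the total sign $\mathbf{1}$. Writing $p_n=\alpha\overline{u}+\beta\overline{v}$, it only gives $\alpha+\beta=0$. So an AF element of $\widehat{G}_{\mathrm{FG}}$ that is odd on every tile (class $\overline{u}+\overline{v}$) is not excluded. What follows ``Main obstacle'' is a list of things to try, and none of them supplies the missing functional:
\begin{itemize}
\item $\mathbf{1}^{\mathsf{T}}\overline{M}_{\mathsf{B}}^{k}$ restricts to the same functional on $\operatorname{span}(\overline{u},\overline{v})$, by your own swap remark.
\item The sign of the induced action on ordered or unordered pairs is a power of the ordinary sign, so it adds nothing.
\item Nothing can be read through $\lambda$, because $\widehat{G}_{\mathrm{FG}}$ does not act on $\mathsf{X}^{\omega}$. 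For instance, $a_{0_0}$ sends $1(0_00_1)^{\omega}$ to $2(0_00_1)^{\omega}$ but fixes $1(0_10_0)^{\omega}$, although both points lie over $10^{\omega}$.
\item You should not expect invariant prefix sets separated by the last letter either. The connectors of $\mathsf{e}_n$ are edges of $b_1,b_2$, and $s_n$ moves $p_{1,x}\in\mathcal{T}_{1,n}x$ to $p_{2,x}\in\mathcal{T}_{2,n}x$.
\end{itemize}

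In fact the map $\pi_n$ you start from does not exist. By the inflation rule, $s_n\in\langle b_1,b_2\rangle$ sends the connecting point $p_2x$ of the $\mathcal{T}_{2,n}$-copy into $\mathcal{T}_{0_1,n}x$ when $x\in\{0_0,1,2\}$, but sends $p_20_1$ into $\mathcal{T}_{0_0,n}0_1$. So the image of the level-$n$ prefix $p_2$ depends on the next letter, and at least one of $b_1,b_2$ does not induce a map on $\Omega_n(\mathsf{B})$.

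The paper avoids the need for a second global functional by working tile by tile. It uses $\operatorname{sgn}_n:\widehat{G}_{\mathrm{FG}}\to(\mathbb{Z}/2\mathbb{Z})^{V_{n+1}}$, the vector of signs of the permutations induced on the individual level-$n$ tiles. This vanishes on the generators, because each induced tile permutation has order dividing $3$. For an AF element at a deep level it is exactly the AF parity vector. That gives every coordinate of $\mathsf{P}_{\mathsf{B}}$ at once. Even a single tile coordinate at two consecutive levels would suffice, since $\overline{M}_{\mathsf{B}}$ swaps $\overline{u}$ and $\overline{v}$.

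Be aware of one caveat for the same reason as above. ``Induced tile permutation'' has to mean the permutation with the connecting points of $\mathsf{e}_n$ and the deeper boundary vertices fixed. The point that genuinely needs checking is that these cut permutations compose multiplicatively. This per-tile ingredient, not a further global functional, is what your argument is missing.
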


\begin{proof}
  Let $\overline{M}_{\mathsf{B}}$ be the reduction of
  $M_{\mathsf{B}}$ modulo $2$, and let
  \[
    \overline{u}=
    \begin{pmatrix}
      0 \\1\\0\\0
    \end{pmatrix},
    \qquad
    \overline{v}=
    \begin{pmatrix}
      1 \\0\\1\\1
    \end{pmatrix}.
  \]
  The image of $\overline{M}_{\mathsf{B}}$ is the two-dimensional space generated by $\overline{u},\overline{v}$, and
  \[
    \overline{M}_{\mathsf{B}}\overline{u}=
    \overline{v},
    \qquad
    \overline{M}_{\mathsf{B}}\overline{v}=
    \overline{u}.
  \]
  The restriction of $\overline{M}_{\mathsf{B}}$ to its image is therefore an automorphism, and we obtain
  $\mathsf{P}_{\mathsf{B}}\cong(\mathbb{Z}/2\mathbb{Z})^2$.

  We next compute the parity realized by $\widehat{G}_{\mathrm{FG}}$. By the recursion defining the modified action, each of $a_{0_0},a_{0_1},a_1,a_2,b_1,b_2$ preserves every finite tile on which it acts. Its permutation of that tile has order dividing $3$, and hence is a product of fixed points and $3$-cycles. It is therefore even.

  Although an element of $\widehat{G}_{\mathrm{FG}}$ need not be AF, every generator preserves each level-$n$ tile
  setwise and therefore induces a permutation of that finite tile. Hence, for every level $n$, we have a homomorphism
  \[\operatorname{sgn}_n:\widehat{G}_{\mathrm{FG}}\longrightarrow(\mathbb{Z}/2\mathbb{Z})^{V_{n+1}}\]
  obtained by taking the signs of the induced tile permutations. This map is defined on the whole group even when the
  sections below level $n$ are nontrivial. It vanishes on every generator, and hence on $\widehat{G}_{\mathrm{FG}}$.

  Let $g\in\widehat{G}_{\mathrm{FG}}\cap\mathsf{S}(\mathfrak{T}_{\mathsf{B}})$. At a sufficiently deep level, the induced tile permutations give the AF description of $g$, and $\operatorname{sgn}_n(g)$ is its AF parity vector. Hence $\varepsilon_{\mathsf{B}}(g)=0$. By the AF parity exact sequence, $g\in\mathsf{A}(\mathfrak{T}_{\mathsf{B}})$. The reverse containment follows from $\mathsf{A}(\mathfrak{T}_{\mathsf{B}})\leq\widehat{G}_{\mathrm{FG}}$.
\end{proof}

Choose a sufficiently large level $N$ and transpositions $\tau_{0_0}\in\mathsf{S}(\mathcal{T}_{0_0,N})$ and
$\tau_{0_1}\in\mathsf{S}(\mathcal{T}_{0_1,N})$. Their parity vectors at level $N$ are the basis vectors $e_{0_0}$ and
$e_{0_1}$. After one refinement,
\[\overline{M}_{\mathsf{B}}e_{0_0}=
  \overline{u},
  \qquad
  \overline{M}_{\mathsf{B}}e_{0_1}=
  \overline{v}.\]
Their classes therefore form a basis of $\mathsf{P}_{\mathsf{B}}$. Put
\[\widehat{G}_{\mathrm{FG}}^{\mathrm{par}}=\left\langle\widehat{G}_{\mathrm{FG}},\tau_{0_0},\tau_{0_1}\right\rangle.\]

\begin{cor}[AF symmetric completion]
  \label{cor:FG-AF-symmetric-completion}
  We have
  \[\mathsf{S}(\mathfrak{T}_{\mathsf{B}})\leq\widehat{G}_{\mathrm{FG}}^{\mathrm{par}}.\]
  Thus two additional finitary transformations complete $\widehat{G}_{\mathrm{FG}}$ to a group containing the AF
  symmetric group.
\end{cor}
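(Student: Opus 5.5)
The plan is to combine Proposition~\ref{prop:FG-selector-system}, which gives $\mathsf{A}(\mathfrak{T}_{\mathsf{B}})\leq\widehat{G}_{\mathrm{FG}}\leq\widehat{G}_{\mathrm{FG}}^{\mathrm{par}}$, with the AF parity exact sequence
\[1\longrightarrow\mathsf{A}(\mathfrak{T}_{\mathsf{B}})\longrightarrow\mathsf{S}(\mathfrak{T}_{\mathsf{B}})\xrightarrow{\ \varepsilon_{\mathsf{B}}\ }\mathsf{P}_{\mathsf{B}}\longrightarrow1.\]
Given $g\in\mathsf{S}(\mathfrak{T}_{\mathsf{B}})$, I will find a word $\tau$ in $\tau_{0_0},\tau_{0_1}$ with $\varepsilon_{\mathsf{B}}(\tau)=\varepsilon_{\mathsf{B}}(g)$. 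Then $g\tau^{-1}\in\ker\varepsilon_{\mathsf{B}}=\mathsf{A}(\mathfrak{T}_{\mathsf{B}})\leq\widehat{G}_{\mathrm{FG}}$, so $g\in\widehat{G}_{\mathrm{FG}}^{\mathrm{par}}$.

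Such a $\tau$ exists once $\varepsilon_{\mathsf{B}}(\tau_{0_0})$ and $\varepsilon_{\mathsf{B}}(\tau_{0_1})$ span $\mathsf{P}_{\mathsf{B}}\cong(\mathbb{Z}/2\mathbb{Z})^2$ (Proposition~\ref{prop:FG-AF-parity}). I will identify $\mathsf{P}_{\mathsf{B}}$ with $\varinjlim\bigl((\mathbb{Z}/2\mathbb{Z})^4,\overline{M}_{\mathsf{B}}\bigr)$, where the class of an element of $\mathsf{S}_N$ is its vector of tile signs at level $N$. A transposition in $\mathsf{S}(\mathcal{T}_{0_0,N})$ has sign vector $e_{0_0}$, and a transposition in $\mathsf{S}(\mathcal{T}_{0_1,N})$ has sign vector $e_{0_1}$. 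Under one refinement these map to $\overline{u}$ and $\overline{v}$, and both lie in the image of $\overline{M}_{\mathsf{B}}$.

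The one point needing care is that $\overline{u},\overline{v}$ are linearly independent and that $\overline{M}_{\mathsf{B}}$ restricted to their span is an automorphism (it swaps them). Hence this span injects into the direct limit and is all of it, so the two classes form a basis of $\mathsf{P}_{\mathsf{B}}$. Beyond that, the sign-vector description must be compatible with refinement, i.e. $\iota_{N,N+1}$ acts on sign vectors by $\overline{M}_{\mathsf{B}}$. This follows from \eqref{eq:prelim-diagonal-embedding}: a permutation on $\mathcal{T}_{v,N}$ is copied once into each $\mathcal{T}_{w,N+1}$ for every edge from $v$ to $w$, so the sign contributions add modulo $2$ with the incidence multiplicities.

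For the final sentence of the statement, I will note that $\tau_{0_0},\tau_{0_1}$ are elements of $\mathsf{S}_N$, hence finitary, so $\widehat{G}_{\mathrm{FG}}^{\mathrm{par}}$ is obtained from $\widehat{G}_{\mathrm{FG}}$ by adjoining two finitary transformations. I do not expect a substantive obstacle. The only care needed is $N$ large enough that the tiles have at least two points, and the bookkeeping above matching the level-$N$ sign vectors with classes in $\mathsf{P}_{\mathsf{B}}$.
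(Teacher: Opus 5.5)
Your proposal is correct and follows the paper's argument. The paper likewise combines $\mathsf{A}(\mathfrak{T}_{\mathsf{B}})\leq\widehat{G}_{\mathrm{FG}}$ with the AF parity exact sequence, using that after one refinement the sign vectors $e_{0_0},e_{0_1}$ of $\tau_{0_0},\tau_{0_1}$ become $\overline{u},\overline{v}$, which form a basis of $\mathsf{P}_{\mathsf{B}}$. Your extra checks, namely the compatibility of sign vectors with $\overline{M}_{\mathsf{B}}$ under the diagonal embedding and the fact that $\overline{M}_{\mathsf{B}}$ swaps $\overline{u}$ and $\overline{v}$, only make explicit what the paper states before the corollary and in the proof of Proposition~\ref{prop:FG-AF-parity}.
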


\begin{proof}
  The parity classes of $\tau_{0_0}$ and $\tau_{0_1}$ generate $\mathsf{P}_{\mathsf{B}}$, while $\mathsf{A}(\mathfrak{T}_{\mathsf{B}})\leq\widehat{G}_{\mathrm{FG}}$. We apply the AF parity exact sequence.
\end{proof}

\subsection{Singular germs and the topological full group}
\label{subsec:FG-singular-full-group}
Let $\xi=2^\omega$.  For $n\geq1$, let $H_n$ be the shifted group of germs at $\xi^{(n)}$. We use the containment
$\mathsf{A}(\mathfrak{T}_{\mathsf{B}})\leq\widehat{G}_{\mathrm{FG}}$ proved in Proposition~\ref{prop:FG-selector-system}.

\begin{prop}[Singular germs and localization]
  \label{prop:FG-singular-germs-localization}
  The point $\xi$ is the unique boundary point of the infinite tiles of the
  modified tile inflation, and it is germ-defining relative to the generating set of $\widehat{G}_{\mathrm{FG}}$.  Consequently, $\widehat{G}_{\mathrm{FG}}$ satisfies the finite singular germ condition of Definition~\ref{def:prelim-finite-singular-germ-condition} with $\Xi=\{\xi\}$.

  For every $n\geq1$, we have $H_n\cong(\mathbb{Z}/3\mathbb{Z})^2$, and $\xi$ is purely non-Hausdorff.

  Moreover, let $h\in H_n$, and let $h_m\in H_m$ be its image under the canonical restriction map, where $m\geq n$. After increasing $m$, there is an element $\widehat{h}\in\widehat{G}_{\mathrm{FG}}$ such that:

  \begin{enumerate}
    \item
          the cylinder $[2^m]$ is $\widehat{h}$-invariant;

    \item
          the shifted local transformation $\widetilde{h}_m=\kappa_{2^m}^{-1}\widehat{h}\kappa_{2^m}$ represents $h_m$;

    \item
          every germ of $\widehat{h}$ outside $[2^m]$ belongs to $\mathfrak{T}_{\mathsf{B}}$;

    \item
          $\widehat{h}$ has order dividing $3$.
  \end{enumerate}

  We may also choose $m$ so that the tile containing $2^m$ has at least three vertices. Every AF truncation associated with $\widehat{h}$ has trivial AF parity. Consequently, every shifted singular germ can be localized to every sufficiently deep compatible cylinder by an element of $\widehat{G}_{\mathrm{FG}}$.
\end{prop}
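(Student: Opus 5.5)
I would start with the tile structure near $\xi=2^\omega$. In the inflation rule of Figure~\ref{fig:FG-inflation-rule}, each level-$(n+1)$ tile is built from three predecessor copies glued at a single triangular component of $\mathsf{e}_n$. By induction, the only vertex of an infinite tile that can carry an unglued boundary edge is the limit of the connecting points. The labels in \eqref{gen1}, \eqref{gen2} and Figure~\ref{fig:FG-periodic-connectors} show that the connecting points of $\mathsf{e}_n$ in the $2$-branch all lie along $2^n$. So the unique boundary point is $\xi$; the two preimages of eventually-zero words under $\lambda$ are separated by the $a_x$ splitting and do not produce boundary.

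For germ-definingness and the finite singular germ condition, I would apply Proposition~\ref{prop:fragmentation-preserves-finite-singular-condition} with $K=\langle b\rangle$ and $\mathcal{F}=\mathcal{F}^{(1)}$. The generators $a_x$ are finitary. The germs of $b_1,b_2$ away from $\xi$ are AF, because their sections are powers of $a$, which act inside tiles. Pure non-Hausdorffness then follows from Proposition~\ref{prop:identity-coordinate-pure-non-Hausdorff}, whose hypotheses (N1) and (N2) were verified in Proposition~\ref{prop:FG-selector-producing-fragmentation}.

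To compute $H_n$, I would map $\mathcal{F}^{(1)}\cong(\mathbb{Z}/3\mathbb{Z})^2$ onto the germs at $\xi$. The map is surjective by the singular germ normal form. For injectivity, a nontrivial element has nonzero activity vector $(d,c+d,2c+d,c)$. Hence it acts as a nontrivial power of $a$ on some phase piece $P_j$ of connectors accumulating on $\xi$, so its germ at $\xi$ is nontrivial. The same argument applied to the shifted recursions $\mathcal{F}^{(i)}$ gives $H_n\cong(\mathbb{Z}/3\mathbb{Z})^2$, since $b^{(i)}_1,b^{(i)}_2$ again have independent activity vectors.

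For localization, fix $h_m$ represented by $f\in\mathcal{F}^{(i)}$ with $i\equiv m+1 \pmod 4$, and let $\widehat{h}$ act as $f$ transported by $\kappa_{2^m}$ on $[2^m]$. To show $\widehat{h}\in\widehat{G}_{\mathrm{FG}}$, I would use the wreath recursion: $b_j=(\ast,\Id,b_j^{(2)})$. The sections at coordinate $2$ are again fragments, and the sections at $0$ are powers of $a$, which are AF on finite tiles. So the corresponding $f'\in\mathcal{F}^{(1)}$ satisfies $f'=\widehat{h}\cdot g$, where $g$ is supported outside $[2^m]$ and has only AF germs there. By compactness $g\in\mathsf{S}_N$ for some $N$. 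By Proposition~\ref{prop:FG-AF-parity}, each generator induces even tile permutations, so $\operatorname{sgn}_N(g)=0$ and therefore $g\in\mathsf{A}(\mathfrak{T}_{\mathsf{B}})\leq\widehat{G}_{\mathrm{FG}}$ by Proposition~\ref{prop:FG-selector-system}. This gives $\widehat{h}=f'g^{-1}\in\widehat{G}_{\mathrm{FG}}$.

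This directly yields properties (1)–(3). For (4), I would arrange $\widehat{h}^3=\Id$ by choosing the corrected element with order $3$: the AF part of $f'$ outside $[2^m]$ is a product of powers of $a$-type $3$-cycles, so I take $\widehat{h}$ acting as $f'$ inside $[2^m]$ and identically outside. The parity claim follows because the AF truncation consists of order-$3$ tile permutations, which are even. Enlarging $m$ guarantees that the tile containing $2^m$ has at least three vertices.

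I expect the main obstacle to be this last decomposition $f'=\widehat{h}g$. The difficulty is making sure that the part of $f'$ outside $[2^m]$ really consists only of AF germs, that it is trivial on the connectors meeting $[2^m]$, and that it is compatible with the phase of $\mathsf{e}_m$. I would first check this one level at a time along the $\mathbb{Z}/4$ periodicity of the recursions, using Lemma~\ref{selectorexist}.
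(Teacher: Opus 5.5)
Your proposal follows the paper's proof step for step: $\xi=2^\omega$ comes from the directed sections continuing along the letter $2$, and germ-definingness from the nonloop-cycle argument. $H_n\cong(\mathbb{Z}/3\mathbb{Z})^2$ comes from the independent activity vectors and the accumulation of the phase pieces, and pure non-Hausdorffness from Propositions~\ref{prop:FG-selector-producing-fragmentation} and~\ref{prop:identity-coordinate-pure-non-Hausdorff}. Localization splits $f'\in\mathcal{F}_b$ into its part on $[2^m]$ and an AF truncation $g\in\mathsf{A}(\mathfrak{T}_{\mathsf{B}})\leq\widehat{G}_{\mathrm{FG}}$, and your localized $\widehat{h}$ is the paper's $a^{-1}\widehat{h}$, with $a$ the AF truncation. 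The decomposition you flag as the main obstacle is immediate, since every fragment maps $[2]$ onto itself with a fragment as its section at $2$, so $[2^m]$ is $f'$-invariant.

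One justification must be replaced: $\operatorname{sgn}_N(g)=0$ does not follow from the generators of $\widehat{G}_{\mathrm{FG}}$ inducing even tile permutations, because $g$ is not yet known to lie in $\widehat{G}_{\mathrm{FG}}$; that is exactly what you are proving. Instead use, as in your final paragraph and in the paper, that $g^3=\Id$ because $[2^m]$ is $f'$-invariant and $\mathcal{F}_b$ has exponent $3$, so every tile permutation of $g$ is a product of $3$-cycles and hence even.
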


\begin{proof}
  Every non-AF boundary edge is carried by a section of the directed generator $b$. Its only nontrivial directed section occurs in the $2$-coordinate and again has the same form at the next level. Hence a boundary vertex on a finite tile can remain a boundary vertex under every further inflation only by continuing with the letter $2$ at each step. This gives the unique boundary path $\xi=2^\omega$. Splitting the finitary generator $a$ into $a_{0_0},a_{0_1},a_1,a_2$ changes only internal tile edges, while fragmenting $b$ only changes the labels on the existing boundary edges. Neither operation creates another persistent boundary path. Hence the set of boundary points is $\{\xi\}$.

  The transformations $a_{0_0},a_{0_1},a_1,a_2$ are finitary. Every non-AF section of $b^{(1)}_1$ or $b^{(1)}_2$
  continues only along the letter $2$. Thus both fragmented directed generators fix $\xi$, have a non-AF germ at $\xi$, and have only AF germs at all other points.

  We verify that $\xi$ is germ-defining. Let $g=s_k\cdots s_1$ be a nonloop cycle based at $\zeta\in\widehat{G}_{\mathrm{FG}}\xi$, and put $\zeta_j=s_j\cdots s_1(\zeta)$. Suppose that $(s_j,\zeta_{j-1})$ is
  non-AF for some $j$. Then $\zeta_{j-1}=\xi$. Since every singular generator fixes $\xi$, we have $\zeta_j=\zeta_{j-1}$, contradicting the definition of a nonloop cycle. Therefore every factor germ belongs to $\mathfrak{T}_{\mathsf{B}}$. The germ $(g,\zeta)$ is then isotropic in the principal groupoid $\mathfrak{T}_{\mathsf{B}}$, and hence it is trivial. Thus $\xi$ is germ-defining.

  We now compute the shifted groups of germs. Let $\mathcal{P}=\{P_0,P_1,P_2,P_3\}$ be the four-piece partition used in the fragmentation of $\langle b\rangle$. For every shifted level $n$, let $P_{0,n},P_{1,n},P_{2,n},P_{3,n}$ be the corresponding shifted pieces. Every $P_{j,n}$ accumulates on $\xi^{(n)}$.

  Let $\ell_{1,n}$ and $\ell_{2,n}$ be the germs at $\xi^{(n)}$ of the pathwise sections of $b^{(1)}_1$ and $b^{(1)}_2$, respectively. After indexing the four shifted pieces according to their phase, their activity vectors are
  \[\ell_{1,n}\longmapsto(0,1,2,1),
    \qquad
    \ell_{2,n}\longmapsto(1,1,1,0)
  \]
  in $(\mathbb{Z}/3\mathbb{Z})^4$. At a different shifted phase, the same vectors may be cyclically permuted, which does not affect the argument. The two vectors are linearly independent.

  Since every $P_{j,n}$ accumulates on $\xi^{(n)}$, a nonzero activity vector gives a nontrivial germ. Hence the activity map is injective on the group generated by $\ell_{1,n}$ and $\ell_{2,n}$, and we obtain
  \[H_n=\langle\ell_{1,n},\ell_{2,n}\rangle\cong(\mathbb{Z}/3\mathbb{Z})^2.\]

  Pure non-Hausdorffness follows from Propositions~\ref{prop:FG-selector-producing-fragmentation} and
  \ref{prop:identity-coordinate-pure-non-Hausdorff}.

  Now let $h\in H_n$. The canonical restriction isomorphism identifies $H_n$ with the germ group of the global
  fragmentation subgroup
  \[\mathcal{F}_b:=\langle b^{(1)}_1,b^{(1)}_2\rangle\cong(\mathbb{Z}/3\mathbb{Z})^2.\]
  Choose $\widehat{h}\in\mathcal{F}_b$ whose pathwise section at level $n$ represents $h$. Since $\mathcal{F}_b$ has
  exponent $3$, the order of $\widehat{h}$ divides $3$.

  Along the singular path, every nontrivial fragment transition has input-output label $2|2$. Hence, after increasing
  $m$, every fragment occurring in $\widehat{h}$ preserves the cylinder $[2^m]$, and so does $\widehat{h}$. By the
  pathwise product rule, the shifted local transformation $\kappa_{2^m}^{-1}\widehat{h}\kappa_{2^m}$ represents the
  canonical image $h_m\in H_m$.

  Every germ of each fragmented directed generator away from $\xi$ is AF. Moreover, all fragmented directed generators fix $\xi$. Therefore, a point different from $\xi$ cannot be mapped to $\xi$ by a word in these generators. It follows that every germ of $\widehat{h}$ away from $\xi$, and hence every germ outside $[2^m]$, belongs to $\mathfrak{T}_{\mathsf{B}}$.

  Since $\mathsf{B}$ is simple and thus $\Omega(\mathsf{B})$ is a Cantor space, we may increase $m$ so that the tile
  containing $2^m$ has at least three vertices.

  It remains to verify the parity of the AF truncation. Define
  \[
    a(x)=
    \begin{cases}
      x,              & x\in[2^m],    \\
      \widehat{h}(x), & x\notin[2^m].
    \end{cases}
  \]
  The cylinder and its complement are $\widehat{h}$-invariant, so $a^3=\Id$. We also have $a\in\mathsf{S}(\mathfrak{T}_{\mathsf{B}})$. At a sufficiently deep finite level, the restriction of $a$ to every tile is a product of fixed points and $3$-cycles. Therefore $\varepsilon_{\mathsf{B}}(a)=0$, and the AF parity exact
  sequence gives
  \[a\in\mathsf{A}(\mathfrak{T}_{\mathsf{B}})\leq\widehat{G}_{\mathrm{FG}}.\]
  Consequently, $a^{-1}\widehat{h}\in\widehat{G}_{\mathrm{FG}}$ is supported on $[2^m]$ and represents $h_m$ there.
  Conjugating by $\mathsf{A}(\mathfrak{T}_{\mathsf{B}})$ moves this localization to every compatible cylinder.
\end{proof}

\begin{thm}
  \label{thm:FG-topological-full-group}
  We have $\mathsf{F}(\mathfrak{G}_{\mathrm{FG}})=\mathsf{S}(\mathfrak{T}_{\mathsf{B}})\widehat{G}_{\mathrm{FG}}$ and $[\mathsf{F}(\mathfrak{G}_{\mathrm{FG}}):\widehat{G}_{\mathrm{FG}}]=4$.
  Moreover, $\widehat{G}_{\mathrm{FG}}^{\mathrm{par}}=\mathsf{F}(\mathfrak{G}_{\mathrm{FG}})$.
\end{thm}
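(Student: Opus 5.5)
The plan is to apply Theorem~\ref{thm:selector-full-group-completion}, equivalently Theorem~\ref{thm:prelim-full-group-completion}, to $\widehat{G}=\widehat{G}_{\mathrm{FG}}$ and check its hypotheses one at a time from the results of this section. The group is generated by $a_{0_0},a_{0_1},a_1,a_2,b_1,b_2$, so it is finitely generated. It is of bounded type over the simple Bratteli diagram $\mathsf{B}$ because it arises from the periodic tile inflation of Figure~\ref{fig:FG-inflation-rule}; Proposition~\ref{prop:fragmentation-preserves-finite-singular-condition} confirms that the fragmentation preserves bounded type. Proposition~\ref{prop:FG-singular-germs-localization} supplies the finite singular germ condition, with $\Xi=\{\xi\}$, $S_{\mathrm{fin}}=\{a_{0_0},a_{0_1},a_1,a_2\}$, $S_{\mathrm{sing}}=\{b_1,b_2\}$ and finite germ groups $H_n\cong(\mathbb{Z}/3\mathbb{Z})^2$. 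Proposition~\ref{prop:FG-selector-system} supplies the selector-absorbing alternating system, hence $\mathsf{A}(\mathfrak{T}_{\mathsf{B}})\leq\widehat{G}_{\mathrm{FG}}$. The last part of Proposition~\ref{prop:FG-singular-germs-localization} gives the localization condition. The first assertion of Theorem~\ref{thm:prelim-full-group-completion} then yields $\widehat{G}_{\mathrm{FG}}^{\mathrm{par}}=\mathsf{F}(\mathfrak{G}_{\mathrm{FG}})$, where the parity completion is the one fixed before Corollary~\ref{cor:FG-AF-symmetric-completion}, using $\tau_{0_0},\tau_{0_1}$.

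For the second part, I would verify the parity condition on AF truncations. Proposition~\ref{prop:FG-singular-germs-localization} shows that each AF truncation $a$ of a localizing representative $\widehat{h}$ satisfies $a^3=\Id$. Its tile restrictions are therefore products of $3$-cycles, so it has trivial AF parity, and trivial parity lies in $\mathsf{P}_{\widehat{G}_{\mathrm{FG}}}=0$. One subtlety must be handled: localization representatives for other cylinders are obtained by conjugating with $\mathsf{A}(\mathfrak{T}_{\mathsf{B}})$. Conjugation does not change the parity class of the truncation, so the condition holds for every representative used. The second assertion of Theorem~\ref{thm:prelim-full-group-completion} then gives $\mathsf{F}(\mathfrak{G}_{\mathrm{FG}})=\mathsf{S}(\mathfrak{T}_{\mathsf{B}})\widehat{G}_{\mathrm{FG}}$ and $[\mathsf{F}(\mathfrak{G}_{\mathrm{FG}}):\widehat{G}_{\mathrm{FG}}]=[\mathsf{P}_{\mathsf{B}}:\mathsf{P}_{\widehat{G}_{\mathrm{FG}}}]$. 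By Proposition~\ref{prop:FG-AF-parity} this index equals $|(\mathbb{Z}/2\mathbb{Z})^2|/1=4$.

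As a sanity check, I would confirm the index directly. Proposition~\ref{prop:FG-AF-parity} gives $\mathsf{S}(\mathfrak{T}_{\mathsf{B}})\cap\widehat{G}_{\mathrm{FG}}=\mathsf{A}(\mathfrak{T}_{\mathsf{B}})$. Hence $\mathsf{S}(\mathfrak{T}_{\mathsf{B}})\widehat{G}_{\mathrm{FG}}/\widehat{G}_{\mathrm{FG}}$ is in bijection with $\mathsf{S}(\mathfrak{T}_{\mathsf{B}})/\mathsf{A}(\mathfrak{T}_{\mathsf{B}})\cong\mathsf{P}_{\mathsf{B}}$, which has order $4$. This check also agrees with Corollary~\ref{cor:FG-AF-symmetric-completion}.

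The main obstacle I expect is matching the hypotheses exactly. The definition of the parity condition in Part~I refers to the AF truncations of the specific localizing representatives, so I need to be sure that those produced in Proposition~\ref{prop:FG-singular-germs-localization} are the ones Part~I uses. If they differ, the fallback argument is that any two truncations representing the same shifted germ differ by an element of $\widehat{G}_{\mathrm{FG}}\cap\mathsf{S}(\mathfrak{T}_{\mathsf{B}})$. That difference has trivial parity by Proposition~\ref{prop:FG-AF-parity}, so the parity class of the truncation does not depend on the representative chosen.
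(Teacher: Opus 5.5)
Your main argument is correct and coincides with the paper's proof. The paper likewise feeds Proposition~\ref{prop:FG-selector-system}, Proposition~\ref{prop:FG-singular-germs-localization} (finite singular germs, localization, and trivial parity of the truncations of the specific representatives $\widehat{h}\in\mathcal{F}_b$ via $a^3=\Id$) and Proposition~\ref{prop:FG-AF-parity} into Theorem~\ref{thm:selector-full-group-completion}.

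Drop the fallback in your last paragraph, because it is false. Two truncations differ by $a_1a_2^{-1}$, which need not lie in $\widehat{G}_{\mathrm{FG}}$. Concretely, replace $\widehat{h}$ by $\widehat{h}t$, where $t\in\mathsf{A}(\mathfrak{T}_{\mathsf{B}})$ is a transposition inside $[2^m]$ away from $\xi$ times a transposition outside $[2^m]$ in the same tile; this is another valid localizing representative, but its truncation has nonzero class in $\mathsf{P}_{\mathsf{B}}$. So the parity condition is a property of the chosen representatives, and that is exactly what the paper verifies.
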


\begin{proof}
  Proposition~\ref{prop:FG-selector-system} gives a selector-absorbing alternating system.  Proposition~\ref{prop:FG-singular-germs-localization} verifies the finite singular germ condition, the localization condition, and the parity condition for the AF truncations.  Proposition~\ref{prop:FG-AF-parity} gives $\mathsf{P}_{\widehat{G}_{\mathrm{FG}}}=0$ and $|\mathsf{P}_{\mathsf{B}}|=4$.  The result follows from
  Theorem~\ref{thm:selector-full-group-completion}.
\end{proof}

\begin{cor}
  \label{cor:FG-full-group-growth}
  The topological full group $\mathsf{F}(\mathfrak{G}_{\mathrm{FG}})$ is torsion and has intermediate growth.
\end{cor}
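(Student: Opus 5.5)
The statement has two parts, torsion and intermediate growth, and I would prove them separately. Both rest on the decomposition $\mathsf{F}(\mathfrak{G}_{\mathrm{FG}})=\mathsf{S}(\mathfrak{T}_{\mathsf{B}})\widehat{G}_{\mathrm{FG}}$ and the finite index $[\mathsf{F}(\mathfrak{G}_{\mathrm{FG}}):\widehat{G}_{\mathrm{FG}}]=4$ from Theorem~\ref{thm:FG-topological-full-group}.

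\emph{Growth.} Since $\widehat{G}_{\mathrm{FG}}$ has finite index in $\mathsf{F}(\mathfrak{G}_{\mathrm{FG}})$, the full group is finitely generated, namely by $\widehat{G}_{\mathrm{FG}}$ together with $\tau_{0_0},\tau_{0_1}$. Its growth function is therefore equivalent to that of $\widehat{G}_{\mathrm{FG}}$, so it suffices to show that $\widehat{G}_{\mathrm{FG}}$ has intermediate growth.

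For the upper bound, I would use that $\widehat{G}_{\mathrm{FG}}$ is a group of bounded type over a stationary Bratteli diagram. Its singular point $\xi$ is purely non-Hausdorff by Proposition~\ref{prop:FG-singular-germs-localization}, and its groups of germs are finite. This is exactly the setting of Nekrashevych's subexponential growth criterion for fragmentations \cite{nekrash18}, as adapted to the fragmented Fabrykowski--Gupta group in \cite[Subsection~5.4]{kua26}. I would verify its hypotheses as follows:
\begin{enumerate}
  \item the orbital graphs of the germ groupoid are linearly repetitive, which follows from the periodic inflation rule of Figure~\ref{fig:FG-inflation-rule};
  \item the tiles grow exponentially, with ratio governed by the Perron eigenvalue $3$ of $C$;
  \item the singularity is purely non-Hausdorff and has finite groups of germs, by Proposition~\ref{prop:FG-singular-germs-localization}.
\end{enumerate}
I expect this step to be the main obstacle. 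The Markov modification $\lambda$ doubles the preimages of eventually-$0$ sequences, so the criterion has to be checked on $\mathcal{M}$ rather than on $\mathsf{X}^\omega$. I would first try to reduce to the argument of \cite[Subsection~5.3.5]{nek22} for the Grigorchuk case, replacing $\mathbb{Z}/2$ there by $\mathbb{Z}/3$.

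For the lower bound, the group is infinite and finitely generated. Once torsion is established, Gromov's theorem excludes polynomial growth, because a virtually nilpotent finitely generated torsion group is finite.

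\emph{Torsion.} Since $\widehat{G}_{\mathrm{FG}}$ has finite index in $\mathsf{F}(\mathfrak{G}_{\mathrm{FG}})$, every $g$ in the full group has a power $g^k\in\widehat{G}_{\mathrm{FG}}$ with $k\le 4!$. So it suffices to show that $\widehat{G}_{\mathrm{FG}}$ is torsion.

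For this I would use the periodicity criterion for fragmentations in \cite{nekrash18}: the groups of germs $H_n\cong(\mathbb{Z}/3\mathbb{Z})^2$ are torsion, and every orbital graph of the action is one-ended or two-ended with finite tiles. One could then follow the Fabrykowski--Gupta contraction argument on sections, applied to the fragment recursions \eqref{gen1} and \eqref{gen2} together with the finitary $a_x$. Every section of these generators has order dividing $3$, and word length strictly decreases under taking sections except along the path $2^\omega$. Along that path, the local behaviour lies in the finite group $\mathcal{F}_b$ of exponent $3$, so an induction on word length gives a finite order for every element.
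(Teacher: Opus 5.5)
Your finite-index transfers are fine (in fact $g^k\in\widehat{G}_{\mathrm{FG}}$ for some $1\le k\le 4$), and they are exactly how the paper passes from $\widehat{G}_{\mathrm{FG}}$ to $\mathsf{F}(\mathfrak{G}_{\mathrm{FG}})$. The gap is the torsion of $\widehat{G}_{\mathrm{FG}}$ itself. Neither of your two torsion arguments uses anything that distinguishes the fragmented group from the unfragmented modified group $G_{\mathrm{FG}}=\langle a_{0_0},a_{0_1},a_1,a_2,b\rangle$. Both would therefore also show that $G_{\mathrm{FG}}$ is torsion, which is false.

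Indeed, in $G_0$ the recursion gives $(ab)^3=(ba,ab,ab)$. If $ab$ had finite order $m$, then $3\mid m$, and $((ab)^3)^{m/3}=1$ would force $(ab)^{m/3}=1$, a contradiction. So $ab$ has infinite order. Since $\lambda$ is onto and intertwines the actions, the lift $a_{0_0}a_{0_1}a_1a_2b\in G_{\mathrm{FG}}$ also has infinite order. Yet $G_{\mathrm{FG}}$ has every feature you invoke:
\begin{enumerate}
\item the same tiles;
\item a finite group of germs at $\xi$, generated by the germ of $b$;
\item generators and sections of order dividing $3$;
\item local behaviour along $2^\omega$ in the exponent-$3$ group $\langle b\rangle$;
\item $\operatorname{Int}\operatorname{Fix}(b)$ accumulating on $\xi$, since $b=(a,\Id,b)$ fixes every cylinder $[2^k1]$.
\end{enumerate}
Your claim that word length strictly decreases under sections except along $2^\omega$ also fails in $\widehat{G}_{\mathrm{FG}}$ itself. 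From \eqref{gen1} one gets $(ab_1^{(2)})^3=(b_1^{(3)}a,\,ab_1^{(3)},\,ab_1^{(3)})$, with no decrease in any coordinate.

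What makes $\widehat{G}_{\mathrm{FG}}$ torsion is the four-phase pattern of identity coordinates. Iterating gives $ab_1^{(2)}\rightsquigarrow ab_1^{(3)}\rightsquigarrow a^2b_1^{(4)}\rightsquigarrow ab_1^{(1)}$, each step up to conjugacy in all three coordinates. The chain stops only because $b_1^{(1)}=(\Id,\Id,b_1^{(2)})$ yields $(ab_1^{(1)})^3=(b_1^{(2)},b_1^{(2)},b_1^{(2)})$. For the unfragmented $b$, by contrast, one has $ab\rightsquigarrow ab$ forever. A proof has to control all such non-contracting chains. The paper does this by showing, as in \cite[Subsection~5.4]{kua26}, that $\widehat{G}_{\mathrm{FG}}$ has only finitely many incompressible elements. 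It then quotes \cite[Proposition~4.5]{kua25} for torsion and \cite[Corollary~4.7]{kua25} for intermediate growth.

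Your growth upper bound needs the same input. The criterion of \cite{nekrash18} is formulated for fragmented dihedral groups with linear orbital graphs, while the tiles here are triangular. So swapping $\mathbb{Z}/2\mathbb{Z}$ for $\mathbb{Z}/3\mathbb{Z}$ in the Grigorchuk argument of \cite[Subsection~5.3.5]{nek22} does not suffice. Finally, your lower bound need not wait for torsion. Since $\widehat{G}_{\mathrm{FG}}\geq\mathsf{A}(\mathfrak{T}_{\mathsf{B}})$, it has finite subgroups of unbounded order, which a finitely generated virtually nilpotent group cannot have, being virtually torsion-free.
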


\begin{proof}[Sketch]
  By \cite[Subsection~5.4]{kua26} and a similar argument, the group $\widehat{G}_{\mathrm{FG}}$ has finitely many incompressible elements. The group $\widehat{G}_{\mathrm{FG}}$ is torsion by \cite[Proposition~4.5]{kua25}. It has intermediate growth by \cite[Corollary~4.7]{kua25}. By Theorem~\ref{thm:FG-topological-full-group}, it has finite index in $\mathsf{F}(\mathfrak{G}_{\mathrm{FG}})$ and thus $\mathsf{F}(\mathfrak{G}_{\mathrm{FG}})$ is torsion. Finite-index overgroups of finitely generated groups are quasi-isometric, so the topological full group has the same growth type.
\end{proof}

\def\BState{\State\hskip-\ALG@thistlm}
\makeatother
\let\oldbibitem\bibitem
\renewcommand{\bibitem}{\setlength{\itemsep}{0pt}\oldbibitem}







\end{document}